\numberwithin{equation}{section}
\newtheorem{thm}{Theorem}[section]
\newtheorem{prop}[thm]{Proposition}
\newtheorem{lem}[thm]{Lemma}
\newtheorem{cor}[thm]{Corollary}
\newtheorem{quest}[thm]{Question}
\theoremstyle{definition}
\newtheorem{rem}[thm]{Remark}
\newtheorem{defn}[thm]{Definition}
\newtheorem{example}[thm]{Example}
\newtheorem{assum}[thm]{Assumption}
\def\cal#1{\mathcal{#1}}
\def\bf#1{\mathbf{#1}}
\def\ol#1{\overline{#1}}
\def\ul#1{\underline{#1}}
\renewcommand{\epsilon}{\varepsilon}
\DeclareMathOperator{\Hom}{Hom}
\DeclareMathOperator{\Ext}{Ext}
\DeclareMathOperator{\modules}{mod} \renewcommand{\mod}{\modules}
\DeclareMathOperator{\determinant}{det} \renewcommand{\det}{\determinant}
\DeclareMathOperator{\Ind}{Ind}
\DeclareMathOperator{\Res}{Res}
\DeclareMathOperator{\id}{id}
\newcommand{\op}{{\rm op}}
\DeclareMathOperator{\Irr}{Irr}
\DeclareMathOperator{\SL}{SL}
\DeclareMathOperator{\GL}{GL}
\DeclareMathOperator{\Tr}{Tr}
\DeclareMathOperator{\supp}{supp}
\DeclareMathOperator{\gl}{gl.\!dim}
\newcommand{\Z}{\mathbb{Z}}
\newcommand{\R}{\mathbb{R}}
\newcommand{\C}{\mathbb{C}}
\newcommand{\PP}{\mathbb{P}}
\renewcommand{\to}{\rightarrow}
\newcommand{\minus}{\smallsetminus}
\begin{document}

\title[Higher representation infinite algebras from metacyclic groups]{Higher representation infinite algebras from McKay quivers of metacyclic groups}
\author{Simone Giovannini}
\address{Simone Giovannini: Dipartimento di Matematica, Universit\`{a} di Padova, Via Trieste 63, 35121 Padova (Italy)}
\email{sgiovann@math.unipd.it}
\thanks{The author is supported by the PhD programme in Mathematical Sciences, Department of Mathematics, University of Padova, and by grants BIRD163492 and DOR1690814 of University of Padova}

\keywords{$n$-representation infinite algebra, higher preprojective algebra, skew group algebra, McKay quiver, metacyclic group}
\subjclass[2010]{16G20; 16S35; 16E65}

\begin{abstract}
For each prime number $s$ we introduce examples of $(s-1)$- and $s$-representation infinite algebras in the sense of Herschend, Iyama and Oppermann, which arise from skew group algebras of some metacyclic groups embedded in $\SL(s,\C)$ and $\SL(s+1,\C)$. For this purpose, we give a description of the McKay quiver with a superpotential of such groups. Moreover we show that for $s=2$ our examples correspond to the classical tame hereditary algebras of type $\tilde{{\mathrm D}}$.
\end{abstract}

\maketitle
\tableofcontents

\section*{Introduction}

In the late seventies McKay \cite{McKay} observed that the finite subgroups of $\SL(2,\C)$ are closely related to simply laced Dynkin diagrams. More precisely, we have the following bijection:
\[
\begin{array}{ccc}
\{ \text{Finite subgroups of $\SL(2,\C)$} \} & \longleftrightarrow & \{ \text{Simply laced Dynkin diagrams} \} \\
\text{Cyclic} & & \text{Type }{\mathrm A}_m \\
\text{Binary dihedral} & & \text{Type }{\mathrm D}_m \\
\text{Binary tetrahedral} & & \text{Type }{\mathrm E}_6 \\
\text{Binary octahedral} & & \text{Type }{\mathrm E}_7 \\
\text{Binary icosahedral} & & \text{Type }{\mathrm E}_8
\end{array}
\]
which has become known as \emph{McKay correspondence}. There are many ways to achieve this bijection.

One is related with the geometry of the quotient space $\C^2/G$, where $G$ is a finite subgroup of $\SL(2,\C)$ acting naturally on $\C^2$. This space has a singularity at the origin $0$ and we can take a minimal resolution $Y\to\C^2/G$: then the preimage of $0$ is a union of irreducible components isomorphic to $\PP^1_\C$, and the corresponding intersection graph is exactly the Dynkin diagram associated to $G$.

The McKay correspondence can also be realized in the context of representation theory of associative algebras by means of preprojective algebras, which were introduced by Gel'fand and Ponomarev in \cite{GP}. Let $G$ be a non-trivial finite subgroup of $\SL(2,\C)$ and let $\C[V]$ be the algebra of polynomial functions over the natural representation $V$ of $G$. Then $G$ acts naturally on $\C[V]$ and it was proven in \cite{RV} that the skew group algebra $\C[V]*G$ is Morita equivalent to the preprojective algebra of a quiver whose underlying graph is the extended variant of the Dynkin diagram corresponding to $G$ under the McKay correspondence.

Now we can consider a preprojective algebra obtained as above and define a grading on it by putting, for all double arrows $\bullet\!\leftrightarrows\!\bullet$, one arrow of degree $1$ and the other of degree $0$. Then its degree $0$ part is the path algebra of an extended Dynkin quiver. In this way (excluding the case of type $\tilde{{\mathrm A}}$ with cyclic orientation) we obtain all (basic connected) tame hereditary finite dimensional $\C$-algebras (\cite{DF},\cite{N}). In particular, every skew group algebra of some finite subgroup $G\leq\SL(2,\C)$ is Morita equivalent to the preprojective algebra of some tame hereditary algebra.

The notion of tame hereditary algebras, and more in general of representation infinite hereditary algebras, has been recently generalized in the context of higher dimensional Auslander-Reiten theory by Herschend, Iyama and Oppermann, who gave the following definition.

\begin{defn}[\cite{HIO}]
Let $n\geq1$. A finite dimensional algebra $\Lambda$ is \emph{$n$-representation infinite} if $\gl \Lambda \leq n$ and $\nu_n^{-i}(\Lambda)$ is a complex concentrated in degree $0$ for all $i\geq0$, where $\nu_n:=\nu\circ[-n] \colon \mathrm{D}^{\rm b}(\mod\Lambda) \to \mathrm{D}^{\rm b}(\mod\Lambda)$ and $\nu$ is the Serre functor \cite{BK} of $\mathrm{D}^{\rm b}(\mod\Lambda)$, the bounded derived category of finitely generated modules over $\Lambda$.
\end{defn}

This is indeed a generalization of the classical case, since the $1$-representation infinite algebras are exactly the hereditary representation infinite algebras.

In \cite{HIO} the authors showed that many of the nice properties of hereditary representation infinite algebras can be generalized in dimension $n$. For example, for an $n$-representation infinite algebra $\Lambda$, they introduce subcategories of $n$-preprojective, $n$-preinjective and $n$-regular modules in $\mod \Lambda$, which behave very similarly to their classical analogues. Moreover, they study the $(n+1)$-preprojective algebra of $\Lambda$, which is defined as the tensor algebra
$$ \Pi_{n+1}(\Lambda) := T_\Lambda \Ext^n_\Lambda(D\Lambda,\Lambda)$$
of the $\Lambda$-bimodule $\Ext^n_\Lambda(D\Lambda,\Lambda)$, where $D$ denotes the standard duality. For $n=1$ it is known (even in more generality, see for example \cite{R}) that the $2$-preprojective algebra is isomorphic to the classical preprojective algebra we mentioned above.

Higher preprojective algebras play a fundamental role in the following characterization of $n$-representation infinite algebras.

\begin{thm}[\cite{AIR},\cite{K},\cite{MM},\cite{HIO}]\label{thm_n-RI_CYGP1_intro}
There is a bijection between isomorphism classes of $n$-representation infinite algebras and isomorphism classes of bimodule $(n+1)$-Calabi-Yau algebras of Gorenstein parameter $1$ with finite dimensional degree $0$ part. This bijection is realized by sending an $n$-representation infinite algebra $\Lambda$ to its higher preprojective algebra $\Pi_{n+1}(\Lambda)$ and a bimodule $(n+1)$-Calabi-Yau algebra $B$ of Gorenstein parameter $1$ to its degree $0$ part $B_0$.
\end{thm}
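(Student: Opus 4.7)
The plan is to establish the bijection by constructing the two maps and then verifying they are mutually inverse, exploiting the interplay between the Calabi-Yau property, the tensor algebra grading, and higher Auslander-Reiten theory.

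\textbf{Forward direction.} Let $\Lambda$ be $n$-representation infinite. The higher preprojective algebra $\Pi := \Pi_{n+1}(\Lambda)$ is nonnegatively graded by construction with $\Pi_0 = \Lambda$ and $\Pi_1 = \Ext^n_\Lambda(D\Lambda,\Lambda)$, so the Gorenstein parameter will equal $1$ as soon as the bimodule $(n+1)$-CY property is confirmed. To produce a CY structure, the plan is to build an explicit length-$(n+1)$ projective bimodule resolution of $\Pi$ over $\Pi^e$: start from a minimal bimodule resolution of $\Lambda$ over $\Lambda^e$ (of length $n$, since $\gl \Lambda = n$) and extend by one more step using the Ext bimodule placed in degree $1$. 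The self-duality of this complex after an $(n+1)$-shift is controlled precisely by the hypothesis that $\nu_n^{-i}(\Lambda)$ is a module, not a genuine complex, for all $i \geq 0$, because this ensures the absence of obstructions when dualising the resolution and matches what Keller's Calabi-Yau completion machinery requires.

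\textbf{Reverse direction.} Let $B$ be bimodule $(n+1)$-CY of Gorenstein parameter $1$ with $B_0$ finite dimensional. Taking the degree-$0$ part of the CY bimodule resolution of $B$ produces a projective bimodule resolution of $B_0$ of length at most $n$, giving $\gl B_0 \leq n$. The CY pairing then identifies $\Ext^n_{B_0}(DB_0,B_0)$ with $B_1$ as a $B_0$-bimodule and yields a canonical graded algebra homomorphism $\Pi_{n+1}(B_0) \to B$. The Gorenstein parameter $1$ condition, together with finite dimensionality of $B_0$, forces $B$ to be generated in degrees $0$ and $1$, so this map is surjective; a Hilbert-series comparison exploiting the CY symmetry in each bidegree then shows injectivity. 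Finally, the Serre functor condition for $B_0$ follows because $\nu_n^{-i}(\Lambda)$ can be recovered from tensoring with the degree-$i$ part of $B$, which is a module concentrated in cohomological degree $0$ since $B$ has a length-$(n+1)$ bimodule resolution and is flat over $B_0$.

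\textbf{Mutual inversion and main obstacle.} That the two assignments are mutually inverse is clear in one direction ($\Pi_{n+1}(\Lambda)_0 = \Lambda$ by construction of the tensor algebra) and follows from the isomorphism $\Pi_{n+1}(B_0) \cong B$ in the other. The delicate step, and the technical core of the theorem, is the construction of the CY bimodule resolution of $\Pi_{n+1}(\Lambda)$ together with its self-duality. The cleanest route is through Keller's Calabi-Yau completion: one shows that the derived $(n+1)$-Calabi-Yau completion of $\Lambda$ has cohomology concentrated in degree $0$, in which case it agrees with the classical tensor-algebra definition of $\Pi_{n+1}(\Lambda)$. Verifying this concentration uses the full force of the $n$-representation infinite hypothesis via the vanishing $\Ext^i_\Lambda(D\Lambda,\Lambda) = 0$ for $i \neq n$ together with the $\nu_n^{-i}$ condition, and is the bridge between the contributions of \cite{AIR}, \cite{K}, \cite{MM} and the packaging in \cite{HIO}.
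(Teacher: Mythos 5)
The paper does not prove this theorem: it is imported from the literature (Amiot--Iyama--Reiten, Keller, Minamoto--Mori, Herschend--Iyama--Oppermann) and restated as Theorem~3.2, both times with citations only, so there is no internal argument to compare yours against. Judged against the actual proofs in those references, your outline has the right architecture: the technical core is indeed Keller's Calabi--Yau completion, the equivalence ``$\Lambda$ is $n$-representation infinite if and only if the derived $(n+1)$-preprojective algebra has cohomology concentrated in degree $0$, in which case it coincides with $\Pi_{n+1}(\Lambda)$'', and, for the inverse map, the fact that a bimodule $(n+1)$-Calabi--Yau algebra of Gorenstein parameter $1$ with $B_0$ finite dimensional is the tensor algebra of $B_1$ over $B_0$ with $B_1\cong\Ext^n_{B_0}(DB_0,B_0)$. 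Your final paragraph identifies this correctly.

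Two steps are nonetheless wrong or gapped as stated. In the forward direction you attribute the self-duality of the bimodule resolution to the hypothesis that each $\nu_n^{-i}(\Lambda)$ is concentrated in degree $0$; that is a misplacement. Self-duality (the bimodule Calabi--Yau property) of the derived completion is automatic from Keller's theorem for any $\Lambda$ with $\gl\Lambda\leq n$; what the $\nu_n$-hypothesis buys is precisely the concentration of the completion's cohomology in degree $0$, i.e.\ that the DG object equals the honest tensor algebra $\Pi_{n+1}(\Lambda)$ --- you say this correctly at the end, but the two roles should not be conflated. In the reverse direction, the claim that ``taking the degree-$0$ part of the CY bimodule resolution of $B$ produces a projective bimodule resolution of $B_0$'' fails as stated: a finitely generated graded projective $B^e$-module is a summand of sums of shifted modules $Be\otimes fB(j)$, and the degree-$0$ component of such a module is $\bigoplus_{a+b=-j}B_ae\otimes fB_b$, which is in general not projective over $B_0^e$. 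The arguments in the references first establish that $B$ is generated in degrees $0$ and $1$ and that the resolution can be arranged with terms $B\otimes_{B_0}V_i\otimes_{B_0}B$ for $B_0$-bimodules $V_i$ living only in degrees $0$ and $1$, and only then truncate; alternatively one deduces $\gl B_0\leq n$ and the $\nu_n$-condition directly from the self-duality isomorphism in the derived category. Since this step is what yields both $\gl B_0\leq n$ and the identification $B_1\cong\Ext^n_{B_0}(DB_0,B_0)$ on which your map $\Pi_{n+1}(B_0)\to B$ depends, the gap is not cosmetic.
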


Bimodule $(n+1)$-Calabi-Yau algebras of Gorenstein parameter $1$ are a special class of positively graded algebras. Their definition will be given in Section~\ref{sec_CYGP}: here we just mention that being bimodule $(n+1)$-Calabi-Yau is a property of the algebra itself and does not involve the grading, while the Gorenstein parameter is a property of the grading.

If on one side the $n$-representation infinite algebras have an interesting behaviour and provide new insights in higher dimensional Auslander-Reiten theory, on the other side very few examples of them are known: hence it is of a certain interest to look for new ones. The only known examples at the moment come from tensor product constructions \cite[\S~2.1]{HIO}, from non-commutative algebraic geometry \cite{MM,HIMO,BH} and from skew group algebras. The latter is the family of examples which we would like to enrich in this paper.

Since, as we already mentioned, all tame hereditary representation infinite algebras can be obtained from finite subgroups of $\SL(2,\C)$, it is natural to wonder if, in a similar way, one can obtain examples of $n$-representation infinite algebras from finite subgroups of $\SL(n+1,\C)$. If $G\leq\SL(n+1,\C)$ and $V$ is the natural representation of $G$, then it is known that the skew group algebra $\C[V]*G$ is Morita equivalent to the path algebra of a quiver $Q_G$ (called the McKay quiver of $G$) modulo some relations: we will denote this algebra by $\Pi_G$. It is proved in \cite{BSW} that the relations in $\Pi_G$ are induced by a superpotential $\omega_G$ and that both $\C[V]*G$ and $\Pi_G$ are bimodule $(n+1)$-Calabi-Yau algebras.

Now we can ask the following question.
\begin{quest}\label{quest_preproj}
If $G\leq\SL(n+1,\C)$, then is $\C[V]*G$ Morita equivalent to the $(n+1)$-preprojective algebra of some $n$-representation infinite algebra?
\end{quest}

In other words, according to Theorem~\ref{thm_n-RI_CYGP1_intro}, can we always find a grading on $\Pi_G$ which gives it Gorenstein parameter $1$ and such that the degree $0$ part is finite dimensional?

Unfortunately, the answer to this question can be negative if $n\geq2$. For example, Thibault proved in \cite{T} that if $G$ is conjugate to a subgroup of $\SL(n_1,\C)\times\SL(n_2,\C)$ for some $n_1,n_2\geq1$ such that $n_1+n_2=n+1$, then $\C[V]*G$ cannot be Morita equivalent to a higher preprojective algebra.

On the other end, there are also situations where the answer to Question~\ref{quest_preproj} is affirmative. In \cite{HIO} the authors describe a method to obtain examples of $n$-representation infinite algebras from abelian subgroups of $\SL(n+1,\C)$. They called them \emph{$n$-representation infinite algebras of type $\tilde{{\mathrm A}}$}, since for $n=1$ they coincide with path algebras of Dynkin quivers of type $\tilde{{\mathrm A}}$.

The aim of this paper is to find other examples of groups for which the answer to Question~\ref{quest_preproj} is affirmative. Given a prime number $s$ and positive integers $m,r,t$ satisfying certain conditions, we will consider the finite subgroup $G$ of $\GL(s,\C)$ generated by the following matrices:
\[
\alpha = \left(
\begin{array}{cccc}
\epsilon_m & 0 & \cdots & 0 \\
0 & \epsilon_m^r & \cdots & 0 \\
\vdots & \vdots & \ddots & \vdots \\
0 & 0 & \cdots & \epsilon_m^{r^{s-1}}
\end{array}\right),
\qquad
\beta = \left(
\begin{array}{ccccc}
0 & 0 & \cdot & 0 & \epsilon_m^t \\
1 & 0 & \cdots & 0 & 0 \\
\vdots & \ddots & \ddots & \vdots & \vdots \\
\vdots & \ddots & \ddots & \vdots & \vdots \\
0 & 0 & \cdots & 1 & 0
\end{array}\right),
\]
where $\epsilon_m$ is a primitive $m$-th root of unity. The conditions we impose imply that $G$ satisfies the following properties:
\begin{itemize}
\item the subgroup $A$ generated by $\alpha$ is normal in $G$, and it is cyclic of order $m$;
\item the quotient $G/A$ is cyclic of order $s$ and so it is simple, since $s$ is a prime number;
\item the conjugation by $\beta$ induces an action of $G/A$ on $A$, where the generator of $G/A$ sends $\alpha$ to $\alpha^r$.
\end{itemize}
In particular we have that $G$ is a metacyclic group, i.e. it is an extension of cyclic groups. Moreover, we will assume that $\alpha$ is not a scalar multiple of the identity: this implies that $G$ is not abelian, and so our examples are different from the ones studied in \cite{HIO}.

For such a group $G$ the following two cases can occur:
\begin{itemize}
\item[(SL)] $G$ is contained in $\SL(s,\C)$;
\item[(GL)] $G$ is not contained in $\SL(s,\C)$.
\end{itemize}
In the case (GL), $\Pi_G$ is the path algebra of the McKay quiver of $G$ modulo some relations which are induced by a \emph{twisted} superpotential $\omega_G$. In particular, $\Pi_G$ is not bimodule $s$-Calabi-Yau: however, there is a natural embedding of $\GL(s,\C)$ in $\SL(s+1,\C)$ and we can consider the image of $G$ under it, which we denote by $G'$. Then $\Pi_{G'}$ is $(s+1)$-Calabi-Yau, since $G'\leq\SL(s+1,\C)$, and $\Pi_{G'}$ and the superpotential $\omega_{G'}$ can be easily obtained from $\Pi_G$ and $\omega_G$.

In Section~\ref{sec_sga-of-metacyclic-groups} we will give a description of the McKay quiver $Q_G$ of a metacyclic group $G$ and of a (twisted) superpotential $\omega_G$. For this purpose, we will rely on the already known description of the McKay quiver $Q_A$ of the abelian subgroup $A\leq G$, and we will exploit a $G/A$-action on $Q_A$. Moreover, we will prove in Theorem~\ref{thm_Psi(omegaG)cont_inPhi(omegaA)} that every path in the superpotential $\omega_G$ of $G$ in a certain sense ``comes from a path in the superpotential of $A$'': in this way we obtain only a partial description of $\omega_G$, but it will suffice for our purposes. In fact we will use this result to prove, in Proposition~\ref{prop_grading-A-->grading-G}, that if we have a grading on $Q_A$ such that $\omega_A$ is homogeneous of degree $a$ and which satisfies an invariance hypothesis, then there exists a grading on $Q_G$ such that $\omega_G$ is homogeneous of degree $a$. An analogous result is proven in Proposition~\ref{prop_grading-A'-->grading-G'} for the embedded group $G'\leq\SL(s+1,\C)$.

In Section~\ref{sec_cuts} we will describe explicitly some gradings satisfying the above results. Following \cite{HIO}, we will give a geometric picture of the McKay quiver of $Q_A$ and we will show that we can obtain gradings on this quiver by considering particular subsets of arrows called \emph{cuts}. Moreover, we will show that gradings satisfying Proposition~\ref{prop_grading-A-->grading-G} can be obtained from cuts which are invariant under the $G/A$-action.

In Section~\ref{sec_examples} we will give some examples where the previous results can be applied. In particular, for each prime number $s$ and for each integer $b\geq1$ we will define two metacyclic groups $M(s,b)\leq\SL(s,\C)$ and $\hat{M}(s,b)\leq\GL(s,\C)$. We will show that both this families of groups give a positive answer to Question~\ref{quest_preproj}: more precisely, we have the following result (see Corollary~\ref{cor_RI-from-metacyclic}).

\begin{thm}
Let $s$ be a prime number.
\begin{enumerate}[label=(\alph*)]
\item For each integer $b\geq 1$, there exists an $(s-1)$-representation infinite algebra which is the degree 0 part of $\Pi_G$, where $G=M(s,b)$.
\item For each integer $b\geq 2$ such that $(b,s)=1$, there exists an $s$-representation infinite algebra which is the degree 0 part of $\Pi_{G'}$, where $G=\hat{M}(s,b)$ and $G'$ is its embedding in $\SL(s+1,\C)$.
\end{enumerate}
\end{thm}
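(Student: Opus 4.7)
The plan is to apply Theorem~\ref{thm_n-RI_CYGP1_intro} in each case. Since $M(s,b)\leq\SL(s,\C)$ by construction, the algebra $\Pi_G$ is bimodule $s$-Calabi-Yau by \cite{BSW}; analogously $\Pi_{G'}$ is bimodule $(s+1)$-Calabi-Yau because $G'\leq\SL(s+1,\C)$. Hence in both cases the remaining task is to produce a positive grading on the relevant algebra with Gorenstein parameter $1$ and finite dimensional degree $0$ part, at which point the degree $0$ algebra will automatically be $(s-1)$- (resp.\ $s$-) representation infinite.

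The strategy for building such a grading is to lift it from the abelian normal subgroup. For part (a), I would produce a $G/A$-invariant grading of $Q_A$ making $\omega_A$ homogeneous of degree $1$ and then apply Proposition~\ref{prop_grading-A-->grading-G} to obtain a grading of $Q_G$ with $\omega_G$ homogeneous of degree $1$; an entirely parallel reduction via Proposition~\ref{prop_grading-A'-->grading-G'} handles part (b). By Section~\ref{sec_cuts}, gradings of $Q_A$ with $\omega_A$ homogeneous of degree $1$ correspond bijectively to cuts, and cuts that are setwise fixed by the $G/A$-action produce gradings satisfying the invariance hypothesis. So the crux of the argument is the explicit construction of a $G/A$-invariant cut on $Q_A$ (resp. $Q_{A'}$).

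Concretely, following \cite{HIO} I would view $Q_A$ as a periodic tessellation of an $(s-1)$-dimensional real torus coming from the character lattice of $A$, so that each $s$-cycle of $\omega_A$ appears as a fundamental $(s-1)$-simplex. Conjugation by $\beta$ realizes the generator of $G/A\cong\Z/s\Z$ as a cyclic permutation of the $s$ coordinate directions. The candidate invariant cut is then the $G/A$-orbit of a carefully chosen family of arrows in one fixed direction; since $s$ is prime, these orbits have size $1$ or $s$, and periodicity of $Q_A$ together with invariance under the coordinate-permuting action should let one arrange that exactly one orbit hits each simplex in a single arrow. For part (b), the construction is performed in $Q_{A'}$ where $A'\leq\SL(s+1,\C)$ has an extra coordinate direction corresponding to the embedding $\GL(s,\C)\hookrightarrow\SL(s+1,\C)$; the hypothesis $(b,s)=1$ is the arithmetic input that ensures an invariant cut can be placed using this extra direction. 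Finite dimensionality of the degree $0$ part in either case should be inherited from the abelian case handled in \cite{HIO}, since the cut on $Q_G$ (resp. $Q_{G'}$) is lifted from one on $Q_A$ (resp. $Q_{A'}$).

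I expect the main obstacle to be the combinatorial construction of the $G/A$-invariant cut together with the verification that the lifted grading truly yields Gorenstein parameter $1$ rather than some multiple, and a finite dimensional (not merely locally finite dimensional) degree $0$ part. In part (b) this is particularly delicate because the cut must be chosen to depend explicitly on $b \bmod s$, which is precisely where the coprimality hypothesis $(b,s)=1$ enters; without it, one expects the natural candidate orbits to collapse and fail to be a cut.
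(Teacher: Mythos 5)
Your high-level skeleton matches the paper's: reduce to producing a $G/A$-invariant, $B$-invariant cut on the quiver $Q$ covering $Q_A$ (resp.\ $Q_{A'}$), push the resulting grading through Proposition~\ref{prop_grading-A-->grading-G} (resp.\ Proposition~\ref{prop_grading-A'-->grading-G'}), and conclude via Corollary~\ref{cor_superpot-homog-->GP1}. However, you stop exactly where the actual content lies: you never construct the invariant cut, and you explicitly defer it as ``the main obstacle.'' The paper's construction is concrete (Definition~\ref{def_cut}): fix $l$ dividing both $n$ and $b$, let $\gamma\colon L\to\Z/sl\Z$ send every simple root $\alpha_1,\dots,\alpha_{N-1}$ to $k$, and put an arrow $a_i\colon v\to v+\alpha_i$ in the cut iff $\ol{\gamma(v)}\geq\ol{\gamma(v+\alpha_i)}$. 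This is not ``the $G/A$-orbit of arrows in one fixed direction''; it is the set of arrows crossing a level-set boundary, and it is $G/A$-invariant simply because $\gamma$ is constant on the roots permuted by $G/A$. The divisibility conditions $l\mid n$ and $l\mid b$ are what give $B$-invariance (Proposition~\ref{prop_example-cut}(c)), and finite dimensionality of the degree $0$ part follows from the quantitative statement that every path of length $\geq sl$ meets the cut (Proposition~\ref{prop_example-cut}(b)) --- not from an appeal to the abelian case.

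More seriously, you misidentify where the hypothesis $(b,s)=1$ enters in part (b). It has nothing to do with the cut: the cut $C_1^{(b)}$ works as soon as $b\geq 2$ (one needs $l\geq 2$ so that some $k<l$ exists, plus $l\mid n$ and $l\mid b$, and $l=b$ does the job since $n=b\,c/s$). The coprimality is needed earlier, to validate Assumption~\ref{ass_repclosed}: the entire description of $Q_{G}$, its twist and its superpotential in the (GL) case presupposes a set of representatives $\cal D$ for the $G/A$-action on $\Z/m\Z$ that is closed under translation by $c$. Proposition~\ref{prop_D-invariant} produces such a $\cal D$ precisely when $(u,s)=1$, where $u$ is the order of translation by $c$; for $\hat M(s,b)$ one has $m=bc$, hence $u=b$, hence the hypothesis $(b,s)=1$. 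Your proposal never mentions this assumption, so even granting the cut construction, the argument for part (b) has a genuine gap: without a $\tau$-closed $\cal D$, Propositions~\ref{prop_grading-A'-->grading-G'} and the supporting results on $\omega_{G'}$ are not available.
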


Finally, we will compute some examples for $s=2,3$. In the (SL) case for $s=2$ we will show that with our construction we obtain all tame hereditary algebras of type $\tilde{{\mathrm D}}$. For $s=3$, the groups we considered belong to the family of \emph{trihedral groups}, which have already been studied from a geometric point of view (see for example \cite{I,IR,Leng}).

Other groups which also raised some interest in geometry are binary dihedral groups in $\GL(2,\C)$ (see \cite{NdC,Leng}): they can be obtained from our construction in the (GL) case with $s=2$.

\subsection*{Acknowledgement}
Most of this paper was written while the author was visiting Uppsala University. The author would like to thank Martin Herschend for the precious comments and suggestions about this paper and for helpful discussions.
The author would like to thank the anonymous referee for the helpful comments which contributed to improve the readability of the paper.

\section{Notations and preliminaries}\label{sec_prelim}

Throughout the paper we will work over the field $\C$ of complex numbers. Unless stated otherwise, all the modules we will consider will be right modules.

\subsection{Algebras and bimodules}

Let $A$ be a $\C$-algebra and denote by $A^e := A^\op\otimes_\C A$ its enveloping algebra. Note that an $A^e$-module $M$ can be considered as an $A$-bimodule by putting $amb=m(a\otimes b)$ for all $m\in M$, $a,b\in A$.

Given an $A^e$-module $M$, we define the bimodule dual $M^\vee := \Hom_{A^e}(M,A^e)$: we will regard it as an $A^e$-module with action given by $(\psi(a\otimes b))(m)=(b\otimes a)\psi(m)$. We will denote by $M^* := \Hom_\C(M,\C)$ the complex dual, which is again an $A^e$-module if we set $(\psi(a\otimes b))(m)=\psi(m(b\otimes a))$.

\subsection{Gradings}

All the gradings we will consider are over $\Z$, unless stated otherwise. If $A$ is a graded $\C$-algebra and $V=\bigoplus_{d\in\Z}V_d$ is a finite dimensional graded $A$-module, then we can give $V^*$ a structure of graded $A$-module by setting $(V^*)_d=V_{-d}^*$. If $M=\bigoplus_{d\in\Z}M_d$ is a finitely generated graded projective $A$-module, then we have a natural grading on the dual $\Hom_A(M,A)$, where the degree $d$ part $\Hom_A(M,A)_d$ is given by morphisms $M\to A$ which are homogeneous of degree $d$.

If $A$ is a graded $\C$-algebra, then $A^e$ inherits naturally a graded structure. The functor $(\_)^\vee$ preserves the finitely generated graded projective $A^e$-modules, where the graded structure on $M^\vee$ is given as above.

For a homogeneous element $x$ in a graded algebra or module, we will often denote its degree by $|x|$.

\subsection{Representations of groups}\label{subsec_repgroups}

Given a finite group $G$, we will denote by $\C G$ its group algebra. We will sometimes identify representations of $G$ with modules over $\C G$. Given two representation $M$ and $N$ of $G$, their tensor product $M \otimes_\C N$ will be regarded as a representation of $G$ via the action $g(m\otimes n)=gm\otimes gn$.

If $H$ is a subgroup of $G$ and $M$ is a representation of $H$, we call $\Ind^G_H(M)$ the representation of $G$ induced by $M$, which, as left modules, is defined by $\C G \otimes_{\C H} M$. If we fix a set $\{g_1,\dots,g_n\}$ of left coset representatives of $G/H$, then $\Ind^G_H(M)$ is isomorphic to $\bigoplus_{i=1}^n g_i \otimes M$ with $G$-action given by $g(g_i\otimes m)=g_j\otimes hm$ for all $g\in G$, where $g_j$ is the coset representative which satisfy $gg_i=g_jh$ for $h\in H$. In order to simplify the notation we will write $g_i m$ for $g_i\otimes m$.

If $N$ is a representation of $G$, we call $\Res^G_H(N)$ the restriction of $M$ to $H$. It is well known (see for example \cite{Ben}) that the restriction functor is a right adjoint to the induction functor. More precisely we have an isomorphism of vector spaces
\begin{align*}
\Hom_H(N,\Res^G_H(M)) & \longrightarrow \Hom_G(\Ind^G_H(N),M) \\
\phi & \longmapsto \left(gx \mapsto g\phi(x)\right)
\end{align*}
which is functorial in $M$ and $N$.

We also have an isomorphism
\begin{align*}
\Ind^G_H(\Res^G_H(M)\otimes_\C N) & \longrightarrow M\otimes_\C \Ind^G_H(N) \\
g(m\otimes n) & \longmapsto gm\otimes gn
\end{align*}
of representations of $G$.

\section{Derivation quotient algebras and gradings}\label{sec_derivation}

In this section we describe superpotentials and derivation quotient algebras following \cite{BSW}, and we consider gradings on them.

\subsection{Superpotentials}

Let $Q=(Q_0,Q_1,s,t)$ be a finite quiver. This is the data of a finite set of vertices $Q_0$, a finite set of arrows $Q_1$ and two maps $s,t\colon Q_1 \to Q_0$ which assign to an arrow respectively its source and its target. We will denote by $\C Q$ the path algebra of $Q$, where the product $ab$ of two arrows has to be intended as ``first do $b$, then $a$''.

We denote by $\C Q_k$ the subspace of $\C Q$ generated by paths of length $k$ and set $S:=\C Q_0$, $V:=\C Q_1$. Then $S$ is a finite dimensional semisimple $\C$-algebra. We have a trace function $\Tr\colon S\to\C$ defined by $\Tr(e)=1$ for all primitive idempotents $e\in S$. We will give $V$ the unique $S$-bimodule structure where $t(a)as(a)=a$ for all $a\in Q_1$: in this way $\C Q$ is identified with the tensor algebra $T_S V$. 

Given a path $p\in\C Q_m$, we can define for any $k\leq m$ the left and right partial derivatives with respect to $p$ as the maps $\partial_p,\delta_p\colon \C Q_k \to \C Q_{m-k}$ given by
\[
\partial_p q := \left\{ \begin{array}{cc}
r & \text{ if } q=pr,\\
0 & \text{ otherwise,}
\end{array} \right.
\qquad
q \delta_p := \left\{ \begin{array}{cc}
r & \text{ if } q=rp,\\
0 & \text{ otherwise.}
\end{array} \right.
\]

Note that the set of arrows $Q_1$ is a basis of $V$, and we have a dual basis $\{a^* \,,\, a\in Q_1\}$ of $V^*$. These yield bases for the vector spaces $\C Q_k$ and $\C Q_k^*$. For a path $p=a_1\cdots a_k \in \C Q_k$, we will denote by $p^*$ the element $a^*_k\cdots a^*_1 \in \C Q_k^*$.

\begin{defn}[\cite{BSW}]
An element $\omega\in \C Q_n$ is called a \emph{superpotential of degree $n$} if it satisfies the following two conditions:
\begin{enumerate}[label=(\arabic*)]
\item $\omega$ is a linear combination of cyclic paths (equivalently, $\omega s = s \omega$ for any $s\in S$);
\item $\sigma(\omega)=(-1)^{n-1}\omega$, where $\sigma$ is the map defined on paths as $\sigma(a_1\cdots a_n) = a_n a_1\cdots a_{n-1}$.
\end{enumerate}
\end{defn}

For later purposes, it will be convenient to consider the twisted analogue of this definition. We call \emph{twist} a $\C$-algebra automorphism $\tau$ of $\C Q$ which satisfies $\tau(\C Q_k) \subseteq \C Q_k$ and permutes the primitive idempotents. If $M$ is an $S^e$-module, we define its (right) twist $M_\tau$ as the vector space $M$ with $S^e$-action given by $m.(s\otimes t):=m(s\otimes \tau(t))$.

\begin{defn}
An element $\omega\in \C Q_n$ is called a \emph{twisted superpotential of degree $n$} if it satisfies the following two conditions:
\begin{enumerate}[label=(\arabic*)]
\item $\omega$ is a linear combination of paths $p$ satisfying $t(p)=\tau(s(p))$ (equivalently, $\omega s = \tau(s) \omega$ for any $s\in S$);
\item  $\sigma^\tau(\omega)=(-1)^{n-1}\omega$, where $\sigma^\tau$ is the map defined on paths as $\sigma^\tau(a_1\cdots a_n) = \tau(a_n) a_1\cdots a_{n-1}$.
\end{enumerate}
\end{defn}
Note that if the twist is trivial we recover the definition of superpotential.

Given a twisted superpotential $\omega\in \C Q_n$ and an integer $k\leq n$, we can define an $S^e$-module morphism
$$ \Delta^\omega_k \colon \C Q_k^* \otimes_S S_\tau \to \C Q_{n-k} $$
by $\Delta^\omega_k(p^* \otimes s) := \partial_p \omega s$. We denote by $W_{n-k}$ the image of $\Delta^\omega_k$.

\begin{defn}
Let $\omega\in \C Q_n$ be a (twisted) superpotential. The \emph{derivation quotient algebra} of $\omega$ of order $k$ is defined as
$$ \cal D(\omega,k) := \C Q / \langle W_{n-k} \rangle = \C Q / \langle \partial_p \omega \,,\, p \text{ path of length } k \rangle, $$
where $\langle W_{n-k} \rangle$ denotes the smallest two-sided ideal of $\C Q$ which contains $W_{n-k}$.
\end{defn}

For later use we give the following definition.

\begin{defn}
Write a superpotential $\omega\in \C Q_n$ as
$$\omega = \sum_{|p|=n} c_p p,$$
for some scalars $c_p\in\C$. Then we define the \emph{support of $\omega$} to be the set
$$\supp(\omega) := \{ p \,|\, p \text{ path of length $n$, } c_p\neq0 \}.$$
\end{defn}

\subsection{Graded quivers}

We recall that a morphism of quivers $\phi\colon Q \to Q'$ consists in two maps $Q_0 \to Q'_0$, $Q_1 \to Q'_1$ (which, abusing of notation, we will both denote again by $\phi$) which are compatible with the source and target maps.

\begin{defn}
\begin{itemize}
\item A ($\Z-$)\emph{graded quiver} is a couple $(Q,g)$ consisting of a quiver $Q$ and a map $g\colon Q_1 \to \Z$. A \emph{morphism of graded quivers} $\phi\colon (Q,g) \to (Q',g')$ is given by a morphism of quivers $\phi\colon Q \to Q'$ such that the following diagram commutes:
\[
\xymatrix{
Q_1 \ar[rr]^{\phi} \ar[dr]_{g} & & Q'_1 \ar[dl]^{g'} \\
 & \Z & }
\]
\item Let $\phi\colon Q \to Q'$ be a morphism of quivers and suppose that $Q$ is graded by $g$. We will say that $\phi$ is \emph{$g$-gradable} if, for every arrow $a\in Q'_1$, $\phi^{-1}(a)$ is either empty or a homogeneous subset of $Q_1$ (i.e. all its elements have the same degree).
\end{itemize}
\end{defn}

Let $\phi\colon Q \to Q'$ be a morphism of quivers. There is a natural way to induce a grading on $Q$ from a grading on $Q'$, and vice versa, which we now illustrate.

\begin{defn}\label{def_graded-quiver-morphisms}
\begin{enumerate}[label=(\arabic*)]
\item Suppose that $g$ is a grading on $Q'$. Then we define a grading $\phi^*g$ by putting $(\phi^*g)(a)=g(\phi(a))$ for all $a\in Q_1$. Note that this is the unique grading on $Q$ which makes $\phi$ a morphism of graded quivers.
\item Suppose that $g$ is a grading on $Q$ and that $\phi$ is $g$-gradable. It is clear that there always exists a grading $g'$ on $Q'$ which makes $\phi$ a morphism of graded quivers: indeed, it is enough to put $g'(a)$ equal to $k$ if the elements of $\phi^{-1}(a)$ have all degree $k$, and to any integer if $\phi^{-1}(a)=\varnothing$. If, in addition, we assume that $\phi\colon Q_1 \to Q'_1$ is surjective, then such a grading is unique and we will denote it by $\phi_*g$.
\end{enumerate}
\end{defn}

\begin{rem}
A grading on $Q$ induces in a natural way a grading on the path algebra $\C Q$. Note that in general this grading does not coincide with the natural one on $\C Q$ given by path length (i.e., the one obtained by putting all arrows in degree $1$).
\end{rem}

From now on we will fix a grading on $\C Q$ which comes from a grading on $Q$. Note that in this case all elements of $S$ have degree $0$. We will now show that if $\omega$ is a superpotential which is homogeneous with respect to this grading, then we get a graded structure on $\cal D(\omega,k)$.

\begin{lem}\label{lem_D(w,k)-graded}
Suppose that $\omega\in \C Q_n$ is a superpotential which is homogeneous of degree $a$. Then $W_i$ is a graded $S^e$-submodule of $\C Q$ for all $i=0,\dots,n$. In particular the derivation quotient algebra $\cal D(\omega,k)$ inherits in a natural way a structure of graded $\C$-algebra.
\end{lem}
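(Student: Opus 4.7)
The plan is to show that $\Delta^\omega_k$ is a graded morphism of $S^e$-modules (up to a uniform degree shift by $a$), from which the gradedness of $W_{n-k}$ as an $S^e$-submodule and then of the two-sided ideal $\langle W_{n-k} \rangle$ follows formally.

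First I would check the gradedness of the basic ingredients. The grading on $\C Q$ induced by $g\colon Q_1\to\Z$ is compatible with concatenation of paths: if $p=a_1\cdots a_\ell$ is a path, then $|p|=\sum_i g(a_i)$, and multiplication of two homogeneous paths adds degrees. Since idempotents $e\in S$ sit in degree $0$, both the left and right $S$-actions, and hence the $S^e$-action, preserve the grading on $\C Q_m$ for every $m$. The dual space $\C Q_k^*$ is graded via $(\C Q_k^*)_d = (\C Q_k)_{-d}^*$, so the dual basis element $p^*$ has degree $-|p|$. The twisted bimodule $S_\tau$ is concentrated in degree $0$ since $S$ is, so in $\C Q_k^*\otimes_S S_\tau$ the element $p^*\otimes s$ has degree $-|p|$.

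Next I would verify that the partial derivative is homogeneous. For a path $p$ of length $k$ and a path $q$ of length $n$, the definition gives $\partial_p q = r$ if $q=pr$ and $0$ otherwise; in either case $\partial_p q$ is either $0$ or homogeneous of degree $|q|-|p|$. By linearity, if $\omega$ is homogeneous of degree $a$, then $\partial_p \omega$ is homogeneous of degree $a-|p|$ (using only those summands of $\omega$ beginning with $p$, whose total degree is $a$). Hence
\[
\bigl|\Delta^\omega_k(p^*\otimes s)\bigr| = |\partial_p\omega \cdot s| = a-|p| = a + |p^*\otimes s|,
\]
so $\Delta^\omega_k\colon \C Q_k^*\otimes_S S_\tau \to \C Q_{n-k}$ is a homogeneous map of degree $a$, i.e.\ a morphism of graded $S^e$-modules after the shift $[-a]$.

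The image of a homogeneous morphism of graded modules is a graded submodule, so $W_{n-k}$ is a graded $S^e$-submodule of $\C Q_{n-k}\subseteq \C Q$, proving the first assertion for every index $i=0,\dots,n$. For the second assertion, note that the two-sided ideal $\langle W_{n-k}\rangle$ is generated by homogeneous elements (since $W_{n-k}$ is spanned by homogeneous elements and multiplication in $\C Q$ respects the grading), hence is itself a graded ideal. Therefore the quotient $\cal D(\omega,k)=\C Q/\langle W_{n-k}\rangle$ inherits a natural grading from $\C Q$.

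The argument is essentially bookkeeping, and I do not expect a real obstacle; the only subtle point is to make sure that the twist $S_\tau$ and the sign convention $(\C Q_k^*)_d=(\C Q_k)_{-d}^*$ are treated consistently so that $\Delta^\omega_k$ really lands in the claimed degree, but since $\tau$ permutes primitive idempotents (all of degree $0$), the twist has no effect on the grading.
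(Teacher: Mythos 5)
Your proposal is correct and follows essentially the same route as the paper: both show that $\Delta^\omega_k$ is homogeneous of degree $a$ by computing $|\partial_p\omega| = a - |p| = a + |p^*|$, conclude that $W_{n-k}$ is graded as the image of a homogeneous morphism, and pass to the graded quotient. Your additional remarks on the degree-$0$ nature of $S_\tau$ and the dual-grading convention are just a more careful spelling-out of the same argument.
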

\begin{proof}
Firstly we show that the morphism $\Delta_k^\omega$ is homogeneous of degree $a$. Indeed, if $p^*\in\C Q_k^*$ has degree $d$, then $p$ has degree $-d$ and by definition of partial derivative we have $|\Delta_k^\omega(p^*)|=|\partial_p\omega|=a-d$.

Hence each $W_i$ is a graded $S^e$-submodule of $\C Q$ because it is the image of an homogeneous morphism. This implies that the ideal generated by it is homogeneous and so we have a well defined grading on the quotient $D(\omega,k) = \C Q / \langle W_{n-k} \rangle$.
\end{proof}

\section{Graded Calabi-Yau algebras and the Gorenstein parameter}\label{sec_CYGP}

We will now show how we can obtain $n$-representation infinite algebras from derivation quotient algebras.

\begin{defn}
Let $a$ be an integer and $n\geq 2$. A positively graded $\C$-algebra $A = \bigoplus_{i\geq 0} A_i$ is called \emph{bimodule $n$-Calabi-Yau of Gorenstein parameter $a$} if the $A^e$-module $A$ has a bounded resolution $P_\bullet$ of finitely generated graded projective $A^e$-modules such that we have an isomorphism of complexes
$$ \Phi\colon P_\bullet \xrightarrow{\sim} P_\bullet^\vee[n](-a). $$
Here $[n]$ denotes the shift of complexes, while $(-a)$ denotes the shift of the grading.
\end{defn}

In other words, we want a commutative diagram
\[
\begin{tikzcd}
\dots \arrow[r] & 0 \arrow[d] \arrow[r] & P_n \arrow[r,"d_n"] \arrow[d,"\Phi_n"] & \dots \arrow[r,"d_2"] & P_1 \arrow[r,"d_1"] \arrow[d,"\Phi_1"] & P_0 \arrow[r] \arrow[d,"\Phi_0"] & 0 \arrow[d] \arrow[r] & \dots \\
\dots \arrow[r] & 0 \arrow[r] & P_0^\vee \arrow[r,"d_1^\vee"] & \dots \arrow[r,"d_{n-1}^\vee"] & P_{n-1}^\vee \arrow[r,"d_n^\vee"] & P_n^\vee \arrow[r] & 0 \arrow[r] & \dots
\end{tikzcd}
\]
where the maps $\Phi_i$ are isomorphisms and homogeneous of degree $-a$.

The above definition is motivated by the following theorem.

\begin{thm}[\cite{AIR},\cite{K},\cite{MM},\cite{HIO}]\label{thm_n-RI_CYGP1}
If $A$ is a bimodule $n$-Calabi-Yau of Gorenstein parameter $1$ such that $\dim_\C A_0 < \infty$, then $A_0$ is $(n-1)$-representation infinite in the sense of \cite{HIO}. Viceversa, every $(n-1)$-representation infinite algebra is the degree $0$ part of a bimodule $n$-Calabi-Yau algebra of Gorenstein parameter $1$.
\end{thm}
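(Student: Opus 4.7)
The plan is to prove both directions of the correspondence via the higher preprojective algebra construction $\Pi_n(\Lambda) := T_\Lambda \Ext^{n-1}_\Lambda(D\Lambda,\Lambda)$, with the bimodule $\Ext^{n-1}_\Lambda(D\Lambda,\Lambda)$ placed in degree $1$ so that the tensor algebra becomes positively graded with degree $0$ part equal to $\Lambda$.

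For the direction ``CY implies $(n-1)$-RI'': starting from the self-dual resolution $\Phi\colon P_\bullet \xrightarrow{\sim} P_\bullet^\vee[n](-1)$, I would first apply $A_0\otimes_A -\otimes_A A_0$ (i.e.\ truncate to the degree $0$ part of each $P_i$) to produce a projective $A_0^e$-resolution of $A_0$ of length at most $n-1$, whence $\gl A_0 \leq n-1$. The Gorenstein parameter being exactly $1$ is crucial here: it ensures that the ``top'' term of the dualised resolution lies in degrees $\geq 1$, which is what makes the truncation yield a shorter resolution of $A_0$. Next, from the CY duality I would read off an isomorphism of $A_0$-bimodules $\Ext^{n-1}_{A_0}(DA_0,A_0) \cong A_1$, and extend it by induction on degree to an isomorphism of graded algebras $A \cong T_{A_0} A_1$. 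The $n$-representation infinite condition then follows because each graded piece $A_i$ can be identified with $\nu_{n-1}^{-i}(A_0)$ in $\mathrm{D}^{\rm b}(\mod A_0)$, and $A_i$ is by construction a genuine module concentrated in degree $0$ of the derived category.

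For the direction ``$(n-1)$-RI implies CY'': given $\Lambda$ with $\gl\Lambda \leq n-1$, I would take a minimal projective $\Lambda^e$-resolution $Q_\bullet \to \Lambda$ of length $n-1$ and, using the fact that $\nu_{n-1}^{-i}(\Lambda)\in\mod\Lambda$ for every $i\geq 0$, show that the higher $\Tor$ groups of $\Ext^{n-1}_\Lambda(D\Lambda,\Lambda)$ with itself over $\Lambda$ vanish. This lets one build an explicit length-$n$ projective bimodule resolution of $\Pi_n(\Lambda)$ by tensoring $Q_\bullet$ with the tensor algebra on both sides and splicing with its dual shifted by $[n-1]$. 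The required self-duality $P_\bullet \cong P_\bullet^\vee[n](-1)$ then follows from Serre duality on $\mathrm{D}^{\rm b}(\mod\Lambda)$ together with the grading convention that places $\Ext^{n-1}_\Lambda(D\Lambda,\Lambda)$ in internal degree~$1$.

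The main obstacle will be the delicate bookkeeping of both the homological degree and the internal grading in the isomorphism $\Phi$: reconciling the shift of complexes $[n]$ with the internal grading shift $(-1)$ is exactly what forces the Gorenstein parameter to equal $1$, and verifying that this matches \emph{on the nose} the $n$-RI property (rather than a shifted or twisted variant) requires a careful Koszul-type argument. This is essentially the content distilled from the works of Keller, Amiot--Iyama--Reiten, Minamoto--Mori and Herschend--Iyama--Oppermann cited in the statement.
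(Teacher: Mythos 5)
You should first be aware that the paper does not prove this statement at all: it is quoted from the literature (Amiot--Iyama--Reiten, Keller, Minamoto--Mori and Herschend--Iyama--Oppermann), so there is no in-paper argument to measure your proposal against. Your outline does follow the strategy of those references --- truncating the self-dual graded bimodule resolution to bound $\gl A_0$, identifying $A$ with the higher preprojective algebra $T_{A_0}\Ext^{n-1}_{A_0}(DA_0,A_0)$, and using a Calabi--Yau-completion-type construction for the converse --- so as a roadmap it is the right one.

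Two points in the forward direction would not survive being written out. First, $A_0\otimes_A-\otimes_A A_0$ is right exact but not exact, so applying it to $P_\bullet$ need not produce a resolution, and it is in any case not the same operation as passing to the degree-$0$ graded component of each $P_i$; the argument needs the latter, which \emph{is} exact because extracting a fixed graded component is a degreewise direct summand, and it is then the Gorenstein parameter being exactly $1$ that forces the degree-$0$ component of $P_n\cong P_0^\vee(-1)$ to vanish, shortening the resolution to length $n-1$. Second, and more seriously, the step ``each graded piece $A_i$ can be identified with $\nu_{n-1}^{-i}(A_0)$, and $A_i$ is by construction concentrated in degree $0$'' is circular as phrased: $\nu_{n-1}^{-i}(A_0)$ is a priori the \emph{derived} tensor power $\theta\otimes^{\mathbf{L}}_{A_0}\cdots\otimes^{\mathbf{L}}_{A_0}\theta$ with $\theta=\Ext^{n-1}_{A_0}(DA_0,A_0)$, and the whole content of the $(n-1)$-representation infinite condition is that these derived tensor powers have no cohomology outside degree $0$. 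An isomorphism of graded algebras $A\cong T_{A_0}A_1$ only identifies the \emph{underived} tensor powers with the $A_i$; one must separately prove the vanishing of the higher $\operatorname{Tor}$ groups, by induction using the truncated resolution. You flag that a ``careful Koszul-type argument'' is required, but that argument is precisely where the theorem lives, so what you have is an accurate plan rather than a proof.
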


In view of this, we will be interested in finding examples of bimodule $n$-Calabi-Yau algebras of Gorenstein parameter $1$. In the next subsection, following \cite{BSW}, we will describe a way to obtain such examples from derivation quotient algebras.

\subsection{Examples from derivation quotient algebras}

Let $Q$ be a quiver and retain the notation of Section~\ref{sec_derivation}. We put $A:=\cal D(\omega,n-2)$ and fix a positive grading on $\C Q$ such that $\omega$ is homogeneous of degree $a$. This induces, by Lemma~\ref{lem_D(w,k)-graded}, a grading on $A$ which is again positive.

Set $P_i:=A \otimes_S W_i \otimes_S A$ if $0\leq i\leq n$ and $P_i:=0$ otherwise. Then we have a complex of projective $A^e$-modules
\begin{equation}\label{eq_resolution}
P_\bullet = (\dots \to 0 \to P_n \xrightarrow{d_n} P_{n-1} \xrightarrow{d_{n-1}} \dots \xrightarrow{d_1} P_0 \to 0 \to \dots),
\end{equation}
with differentials $d_i\colon P_i \to P_{i-1}$ defined by
$$ d_i = \varepsilon_i (d_i^l + (-1)^i d_i^r), $$
where
\[ d_i^l(1 \otimes \partial_p \omega \otimes 1) := \sum_{b\in Q_1} b \otimes \partial_b\partial_p\omega \otimes 1,\]
\[ d_i^r(1 \otimes \partial_p \omega \otimes 1) := \sum_{b\in Q_1} 1 \otimes \partial_p\omega\delta_b \otimes b,\]
\[ \varepsilon_i:= \left\{ \begin{array}{cc}
(-1)^{i(n-i)} & \text{ if } i<(n+1)/2, \\
1 & \text{ otherwise.}
\end{array}\right.\]
Clearly each $P_i$ is a graded $A^e$-module, because $W_i$ is a graded $S^e$-submodule of $\C Q$. Moreover it is easy to see that the differentials $d_i$ have degree zero, so $P_\bullet$ is a complex of graded projective $A^e$-modules. The following is a graded version of \cite[Theorem~6.2]{BSW}.

\begin{thm}\label{thm_potential_deg_a}
Suppose that the complex $P_\bullet$ defined above is a resolution of $A$ (i.e. it is exact in positive degrees and $H^0(P_\bullet)=A$). Then $A$ is $n$-bimodule Calabi-Yau of Gorenstein parameter $a$.
\end{thm}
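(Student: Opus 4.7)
The strategy is to establish this as a graded refinement of the ungraded result [BSW, Theorem~6.2]. I would construct explicit homogeneous isomorphisms $\Phi_i \colon P_i \to P_{n-i}^\vee$ of graded $A^e$-modules, each of degree $-a$, assemble them into a chain map $\Phi\colon P_\bullet \to P_\bullet^\vee[n](-a)$, and verify that they commute with the differentials.

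First I would recall the construction of the $\Phi_i$ from BSW. For each $i$, the subspace $W_i \subseteq \C Q_i$ carries a natural $S^e$-linear pairing $W_i \otimes_S W_{n-i} \to S_\tau$ built from the coefficients of $\omega$; the twisted cyclic symmetry $\sigma^\tau(\omega)=(-1)^{n-1}\omega$ asserts exactly that this pairing is non-degenerate, and so furnishes an $S^e$-isomorphism $W_i \cong W_{n-i}^\vee$ (up to the twist). Applying $A \otimes_S (-) \otimes_S A$ yields $\Phi_i\colon P_i \to P_{n-i}^\vee$, and BSW's computation --- in which the signs $\varepsilon_i$ play an essential role --- shows that $\Phi$ is a morphism of complexes. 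Since $P_\bullet$ is a resolution of $A$ by hypothesis, this witnesses the $n$-Calabi-Yau property.

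To upgrade to the graded setting I would verify that the pairing interacts with the grading in the expected way. Since $\omega$ is homogeneous of degree $a$ and the partial derivative $\partial_p$ shifts degree by $-|p|$, a generator $\partial_p \omega$ of $W_i$ has degree $a - |p|$; the pairing is non-zero only on pairs whose degrees sum to $a$. Hence each $\Phi_i$ sends degree-$d$ elements of $P_i$ into degree-$(d-a)$ elements of $P_{n-i}^\vee$, i.e.\ is homogeneous of degree $-a$. Combined with the fact that each $d_i$ has degree $0$ (direct from the formulas for $d_i^l, d_i^r$ together with Lemma~\ref{lem_D(w,k)-graded}), this gives the required isomorphism $\Phi\colon P_\bullet \to P_\bullet^\vee[n](-a)$, so $A$ is bimodule $n$-Calabi-Yau of Gorenstein parameter $a$.

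The main obstacle is not the grading itself, which is routine bookkeeping, but the ungraded BSW construction of $\Phi$ and the verification that the squares commute, which is already technically involved. Fortunately it carries over verbatim to our setting because every map used (partial derivatives, evaluation of the pairing, multiplication) is automatically homogeneous once $\omega$ is. One should also check that each $P_i^\vee$ is a finitely generated graded projective $A^e$-module, which follows from the fact that $W_i$ is a finite-dimensional graded $S^e$-module and so $P_i$ is finitely generated graded projective over $A^e$.
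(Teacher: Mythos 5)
Your proposal follows essentially the same route as the paper: it invokes the BSW self-duality isomorphisms $\Phi_i\colon P_i \to P_{n-i}^\vee$ built from the pairing $\langle \partial_p\omega,\partial_q\omega\rangle = \Tr(\partial_{qp}\omega)$, cites BSW for compatibility with the differentials, and then checks homogeneity of degree $-a$ by the same degree count (the pairing vanishes unless the degrees of $\partial_p\omega$ and $\partial_q\omega$ sum to $a$, since $\partial_{qp}\omega\in S$ sits in degree $0$). The argument is correct; the only cosmetic difference is that you attribute non-degeneracy of the pairing to the twisted cyclic symmetry alone, whereas it is really part of the BSW self-duality package you are already citing.
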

\begin{proof}
It is proved in \cite{BSW} that if $P_\bullet$ is a projective resolution of $A$, then it is self dual, i.e. $P_\bullet \cong P_\bullet^\vee[n]$. The duality isomorphism given in [loc.cit.] is described as follows (see also \cite{Kar} for more details). For each $i=0,\dots,n$ we have a perfect pairing
$$ \langle,\rangle \colon W_i \otimes W_{n-i} \to \C $$
given by $\langle \partial_p\omega,\partial_q\omega \rangle := \Tr(\partial_{qp}\omega)$. The isomorphism of $A^e$-modules
$$ \Phi_i \colon P_i = A \otimes_S W_i \otimes_S A \to \Hom_{A^e}(A \otimes_S W_{n-i} \otimes_S A, A^e) = P_{n-i}^\vee $$
is given by $\Phi_i(\alpha \otimes \partial_p\omega \otimes \alpha')(\beta \otimes \partial_q\omega \otimes \beta') = \alpha'\beta \otimes \langle \partial_p\omega,\partial_q\omega \rangle \otimes \beta'\alpha$. These maps commute with the differentials and thus yield an isomorphism of complexes $\Phi_\bullet\colon P_\bullet \to P^\vee_\bullet[n]$.

Now we are only left to show that $\Phi_i$ is homogeneous of degree $-a$. It is enough if we prove that if $\partial_p\omega\in W_i$ has degree $d$, then $\Phi_i(1\otimes\partial_p\omega\otimes 1)$ has degree $d-a$. For this it suffices to show that $\Phi_i(1\otimes\partial_p\omega\otimes 1)(1\otimes\partial_q\omega\otimes 1)$ has degree $e+d-a$ whenever $|\partial_q\omega|=e$. Note that we have $|p|=|\omega|-|\partial_p\omega|=a-d$, $|q|=a-e$ and so $|\partial_{qp}\omega|=a-|p|-|q|=d+e-a$. Suppose now that $d+e\neq a$: then $|\partial_{qp}\omega|\neq0$, but this implies that $\partial_{qp}\omega=0$ because $\partial_{qp}\omega\in S$, so $\Phi_i(1\otimes\partial_p\omega\otimes 1)(1\otimes\partial_q\omega\otimes 1)=0$. If instead $d+e=a$, then it is clear that $\Phi_i(1\otimes\partial_p\omega\otimes 1)(1\otimes\partial_q\omega\otimes 1)$ has degree $e+d-a=0$.
\end{proof}

\section{Skew group algebras and McKay quivers}\label{sec_Skew group algebras and McKay quivers}

In \cite{BSW} it is shown that a source of bimodule Calabi-Yau algebras is provided by a family of skew group algebras. In this section we will summarize this construction and consider the graded case in order to apply Theorem~\ref{thm_potential_deg_a}.

\begin{defn}
Let $R$ be a $\C$-algebra and let $G$ be a finite group acting on $R$ by algebra automorphisms. We define the \emph{skew group algebra} $R*G$ as the $\C$-algebra whose underlying vector space is $R \otimes_\C \C G$, with product given by
$$ (r_1 \otimes g_1)(r_2 \otimes g_2) = r_1 g_1(r_2) \otimes g_1 g_2. $$
\end{defn}

Let $V$ be a $\C$-vector space of dimension $n$ and let $G$ be a finite subgroup of $\GL(V)$. Call $\C[V]$ the algebra of polynomial functions on $V$: then $G$ acts on it in a natural way and we can form the skew group algebra $\C[V]*G$. We have the following description of the latter.

\begin{thm}[\cite{BSW}]
The skew group algebra $\C[V]*G$ is Morita equivalent to a basic algebra $\Pi_G$ which is a derivation quotient algebra of order $n-2$ with a (twisted) superpotential $\omega$ of degree $n$. More explicitly, we have
$$\Pi_G = \C Q / \langle \partial_p \omega \,,\, |p|=n-2 \rangle, $$
where $Q$ is the McKay quiver of $G$ (see Definition \ref{def_McKay}).
\end{thm}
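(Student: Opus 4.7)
The plan is to proceed by a Morita reduction using the primitive idempotents of $\C G$, then produce the superpotential from the $G$-equivariant Koszul complex of $\C[V]$. Since $\C G$ is semisimple, decompose $\C G \cong \prod_{[\rho]\in\Irr(G)}\End_\C(\rho)$ and pick a primitive idempotent $e_\rho$ in each factor. Let $e := \sum_{[\rho]} e_\rho \in \C G \subset \C[V]*G$; one checks that $e$ is a full idempotent (every simple $\C[V]*G$-module restricts to a nonzero $\C G$-module and hence is hit by some $e_\rho$), so $\Pi_G := e(\C[V]*G)e$ is Morita equivalent to $\C[V]*G$, and is basic by construction since it has exactly one primitive idempotent per isoclass of irreducible $G$-representation.

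Next I would identify $\Pi_G$ with a quotient of the path algebra $\C Q$ of the McKay quiver. A direct Frobenius reciprocity computation gives
$$ e_\rho(\C[V]*G)e_{\rho'} \;\cong\; \Hom_G(\rho,\,\C[V]\otimes_\C \rho'),$$
and under the grading $\C[V]=\bigoplus_d S^d V$ the degree-$1$ part is $\Hom_G(\rho, V\otimes_\C \rho')$, whose dimension equals the number of arrows $[\rho']\to[\rho]$ in $Q$ by Definition \ref{def_McKay}. Choosing bases of these Hom spaces produces a surjection $\C Q \twoheadrightarrow \Pi_G$ whose kernel is generated in degree $2$, since $\C[V]$ is defined by quadratic relations in $T(V)$.

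To realize these quadratic relations as partial derivatives of a single (twisted) superpotential, I would use the $G$-equivariant Koszul resolution
$$ 0 \to \C[V]\otimes_\C \Lambda^n V \to \cdots \to \C[V]\otimes_\C V \to \C[V]\to \C \to 0$$
of the trivial $\C[V]$-module. Inducing up along $\C[V]\hookrightarrow \C[V]*G$ and transporting through the Morita equivalence converts it into a self-dual projective $\Pi_G^e$-resolution of the semisimple degree-$0$ part of $\Pi_G$. The top exterior power $\Lambda^n V$ is one-dimensional and restricts to the character $\chi := \det V|_G$; this character is trivial precisely when $G\leq\SL(V)$ and in general supplies the twist $\tau$. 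Unfolding the differential $\C[V]\otimes \Lambda^2 V \to \C[V]\otimes V$ in the chosen arrow basis then displays each quadratic relation as $\partial_p\omega$ with $|p|=n-2$, where $\omega \in (\C Q)_n$ is obtained by pairing length-$n$ cycles against a fixed $G$-equivariant generator of $\Lambda^n V \otimes \chi^{-1}$.

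\textbf{The main obstacle} is verifying that a single cyclically (twisted-)symmetric element $\omega$ genuinely packages all the relations and satisfies $\sigma^\tau(\omega)=(-1)^{n-1}\omega$. This requires the self-duality of the Koszul complex together with the nondegeneracy of the wedge pairing $\Lambda^k V \otimes \Lambda^{n-k} V \to \Lambda^n V$, and careful sign bookkeeping when permuting $n$ wedge factors through the Morita equivalence and through the twist by $\chi$. Once this is in place, the description $\Pi_G = \C Q / \langle \partial_p\omega : |p|=n-2\rangle$ and the (twisted) superpotential property of $\omega$ follow immediately.
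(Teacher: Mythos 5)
The paper does not actually prove this statement: it is quoted directly from \cite{BSW}, so there is no in-paper argument to compare against. That said, your sketch faithfully reconstructs the strategy of \cite{BSW}: Morita reduction by a sum of primitive idempotents (one per irreducible), identification of $e_\rho(\C[V]*G)e_{\rho'}$ with $\Hom_G(\rho,\C[V]\otimes_\C\rho')$ via Frobenius reciprocity so that the degree-$1$ part recovers the McKay quiver, quadraticity of the relations from Koszulity of $\C[V]$, and extraction of the (twisted) superpotential from the top of the equivariant Koszul complex, with the twist carried by $\det_V$. The step you flag as the ``main obstacle'' --- that the degree-$2$ relation space is exactly the span of $\{\partial_p\omega : |p|=n-2\}$ and that $\sigma^\tau(\omega)=(-1)^{n-1}\omega$ --- is indeed the technical core of \cite{BSW}, resting on the nondegeneracy of the pairings $\Lambda^kV\otimes\Lambda^{n-k}V\to\Lambda^nV$ and the self-duality of the Koszul bimodule resolution; as written, your argument is a correct outline of that step rather than a completed verification. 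One bookkeeping point your description leaves implicit: since $e_\rho$ selects a single copy of $\rho$ inside the matrix block $\End_\C(\rho)\subseteq\C G$, the coefficients of $\omega$ acquire the factors $\dim t(p)$ that appear in Theorem~\ref{thm_superpotential_McKay}, and tracking these through the Morita reduction is part of making the ``pairing against a generator of $\Lambda^nV$'' recipe precise.
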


\begin{defn}\label{def_McKay}
Let $\Irr(G)$ be a complete set of representatives for the irreducible representations of $G$. The \emph{McKay quiver $Q$ of $G$} relative to $V$ is described as follows. Its set of vertices is $\Irr(G)$ and, for any $S,T\in \Irr(G)$, the set of arrows going from $S$ to $T$ is given by a basis of the vector space $\Hom_G(S,V\otimes_\C T)$.
\end{defn}

We now give an explicit description, following \cite{BSW}, of the superpotential $\omega$ of the algebra $\Pi_G$.

We denote by $\det_V$ the $1$-dimensional representation of $G$ where each $g\in G$ acts as the multiplication by $\det(g)$. Clearly $\det_V$ is isomorphic to the exterior product $\bigwedge^n V$. Now consider the functor $\tau:=\det_V \otimes_\C$\underline{\hspace{1em}} and note that it sends irreducible representations to irreducible representations. Hence we get a bijection $\tau\colon \Irr(G) \to \Irr(G)$: it is easy to see that its inverse, which we denote by $\tau^-$, is given by tensoring by $\det_{V^*}$. Clearly we have that $\tau(V\otimes_\C S)=V\otimes_\C \tau(S)$, so $\tau$ extends to an automorphism of the path algebra $\tau\colon \C Q \to \C Q$ which preserves the path length: this will be our twist.

Consider now a path
$$ p\colon v_1 \xrightarrow{a_1} v_2 \xrightarrow{a_2} \dots \to v_n \xrightarrow{a_n} v_{n+1} $$
of length $n$ in the McKay quiver $Q$. If we call $S_i$ the representation corresponding to the vertex $v_i$, then we can view the arrows as morphisms $a_i\colon S_i \to V\otimes S_{i+1}$. This induces a composition
\begin{equation}\label{eq_composition_superpotential}
S_1 \xrightarrow{a_1} V\otimes S_2 \xrightarrow{\id_{V}\otimes a_2} \dots \xrightarrow{\id_{V^{\otimes n-1}}\otimes a_n} V^{\otimes n}\otimes S_{n+1} \xrightarrow{\alpha^n_V\otimes \id_{S_{n+1}}} \det_V \otimes S_{n+1} = \tau(S_{n+1}),
\end{equation}
where, for each $m\leq n$, we denote by $\alpha^m_V$ the antisymmetrizer $V^{\otimes m}\to \bigwedge^m V$, $x_1\otimes \dots \otimes x_m \mapsto x_1\wedge \dots \wedge x_m$. By Schur's Lemma, the composition morphism \eqref{eq_composition_superpotential} is the multiplication by a scalar, which will be denoted by $c_p$. Note in particular that $c_p\neq 0$ only when $\tau(t(p))=s(p)$.

\begin{thm}[\cite{BSW}]\label{thm_superpotential_McKay}
The superpotential $\omega$ of the algebra $\Pi_G$ is given by
$$ \omega = \sum_{|p|=n} (c_p \dim t(p))p. $$
\end{thm}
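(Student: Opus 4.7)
The plan is to recover the statement from the main theorem of \cite{BSW} and to track carefully what the (twisted) superpotential must be. The starting point is that $\C[V] = T_\C V / \langle x\otimes y - y\otimes x \mid x,y\in V\rangle$, so
\[
\C[V]*G \;\cong\; (T_\C V * G)\big/\tilde I,
\]
where $\tilde I$ is the $G$-invariant ideal generated by the commutators. The tensor skew group algebra $T_\C V * G$ is Morita equivalent to the path algebra $\C Q$ of the McKay quiver via the idempotent $e=\sum_{S\in\Irr(G)}e_S$ corresponding to one copy of each simple $\C G$-module; under this equivalence the degree $k$ piece $e(V^{\otimes k}\otimes \C G)e$ is identified with $\bigoplus_{S,T\in\Irr(G)}\Hom_G(S,V^{\otimes k}\otimes T)$, i.e.\ with paths of length $k$ in $Q$. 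Passing to $\C[V]*G$, the ideal $\tilde I$ corresponds to the subspace spanned by elements which factor through $\bigwedge^2 V \hookrightarrow V\otimes V$ tensored with lower tensors; one needs to prove that this subspace is exactly the ideal $\langle \partial_p\omega \mid |p|=n-2\rangle$ for the $\omega$ in the statement.

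The second step is to identify $\omega$ by duality. By Theorem~\ref{thm_potential_deg_a} (or rather its precursor in \cite{BSW}), a derivation quotient algebra carries a self-dual projective resolution $P_\bullet$, and the perfect pairings $\langle\partial_p\omega,\partial_q\omega\rangle=\Tr(\partial_{qp}\omega)$ for $|p|+|q|=n$ must coincide with the natural pairing coming from the Koszul-type complex of $\C[V]*G$. The latter is the $G$-twisted Koszul complex whose top term involves $\bigwedge^n V\cong\det_V$, and the evaluation against paths of length $n$ is given precisely by the antisymmetrizer $\alpha^n_V\colon V^{\otimes n}\to\det V$. Consequently the coefficient of a path $p$ in $\omega$ must equal the Schur scalar $c_p$ produced by composing the morphisms $a_i\colon S_i\to V\otimes S_{i+1}$ along $p$ and then projecting via $\alpha^n_V$ onto $\det_V\otimes\tau(s(p))$, as described in (\ref{eq_composition_superpotential}).

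The third step is the normalization: the factor $\dim t(p)$ arises because the Morita equivalence from $\C[V]*G$ to $\Pi_G$ collapses the $\dim T$-fold multiplicity of each simple $T$ in the regular representation. Concretely, if one computes $\Tr(\partial_{qp}\omega)$ for $|p|=n-1$ and $q=a$ of length $1$ with $c_{ap}\neq 0$, then the trace picks up a vertex idempotent $e_{t(p)}$ (which has trace $1$ in $S$) but the original pairing in $\End_G(\C[V]\otimes\C G)$ picks up the trace $\dim t(p)$ on the corresponding isotypic block; the compensating factor $\dim t(p)$ in each coefficient restores the identification. Once this factor is inserted, a direct check on generators shows that $\partial_p\omega$ for $|p|=n-2$ spans exactly the image of $\bigwedge^2 V\otimes V^{\otimes n-2}\otimes\C G$ under the Morita equivalence, which is the set of commutator relations defining $\C[V]*G$ over $T_\C V * G$.

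The main obstacle is the third step: verifying the exact numerical factor $\dim t(p)$. The $c_p$'s are natural (they come intrinsically from Schur's lemma and the antisymmetrizer), but the dimension factor is easy to drop or to mis-place. The cleanest way to pin it down is to compare the self-duality pairing of $P_\bullet$ from Theorem~\ref{thm_potential_deg_a} with the Serre-duality pairing on $\C[V]*G$ coming from the Koszul resolution of the trivial module, which differs by exactly the Peirce trace factor $\dim t(p)$. Everything else is a direct book-keeping exercise once the antisymmetrizer $\alpha^n_V$ is recognized as governing the top-degree pairing.
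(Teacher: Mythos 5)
The paper offers no proof of this statement: it is quoted verbatim from \cite{BSW} (it is their Theorem~3.2/4.4 circle of results), so there is nothing internal to compare your argument against. Judged on its own terms, your proposal correctly names all the ingredients of the BSW argument --- the presentation $\C[V]*G\cong(T_\C V*G)/\tilde I$, the Morita reduction by an idempotent $e=\sum_T e_T$ identifying $e(V^{\otimes k}\otimes\C G)e$ with $\bigoplus_{S,T}\Hom_G(S,V^{\otimes k}\otimes T)$, the role of the antisymmetrizer $\alpha^n_V$ and of $\det_V$ in the top term of the Koszul complex, and the origin of the multiplicity $\dim T$ of each simple $T$ in $\C G$. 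As a proof, however, it has a genuine gap: the two claims that actually constitute the theorem are asserted rather than established.

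Concretely: (i) you state that one ``needs to prove'' that the commutator ideal corresponds under the Morita equivalence to $\langle\partial_p\omega\,,\,|p|=n-2\rangle$ for the specific $\omega$ in the statement, and this verification is never performed; and (ii) the normalization $\dim t(p)$ is justified only by the sentence that the two pairings ``differ by exactly the Peirce trace factor $\dim t(p)$,'' which is precisely the assertion to be proved. The honest route is computational, not duality-theoretic: the twisted superpotential of $\C[V]*G$ is the image of the canonical element of $\textstyle\bigwedge^n V\otimes\C G$ inside $V^{\otimes n}\otimes\C G$, and one must expand $e\,\omega\, e$ in the path basis of $e(V^{\otimes n}\otimes\C G)e$; the coefficient $c_p$ drops out of Schur's lemma applied to the composition \eqref{eq_composition_superpotential}, while $\dim t(p)$ appears because the isotypic component of $t(p)$ in $\C G$ has multiplicity $\dim t(p)$ and the trace of the identity on that block is $\dim t(p)$ rather than $1$. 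Since this expansion is exactly where a factor is ``easy to drop or to mis-place'' (as you yourself note), deferring it means the proposal does not yet prove the formula; it reproduces the statement of \cite{BSW} together with a plausible plan for re-deriving it.
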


\begin{rem}\label{rem_superpotential-depends}
It is worth pointing out that the superpotential described above depends on the choice of the basis for the arrows in $Q$.
\end{rem}

Suppose that the basis we choose for $\Hom_G(S,V\otimes_\C T)$ is invariant under the twist. Then the automorphism $\tau$ of $\C Q$ is actually induced by an automorphism of the quiver $Q$ and, by \cite[Lemma~4.3]{BSW}, the coefficients of the superpotential have the property that $$c_{a_1\cdots a_n} = (-1)^{n-1}c_{\tau(a_n)a_1\cdots a_{n-1}}$$ for all paths $a_1\cdots a_n$ in $Q$. In case our basis is invariant only up to multiplication by a non-zero scalar, we can still define an automorphism of $Q$, which we call $\tau'$, by putting $\tau'(v):=\tau(v)$ if $v\in Q_0$ and $\tau'(a):=a'$ if $a\in Q_1$, where $a'$ is the only arrow such that $\tau(a)$ is a multiple of $a'$. It is easy to see that in this case we have the following weaker version of \cite[Lemma~4.3]{BSW}.

\begin{lem}\label{lem_cyclic-permutation}
If $p=a_1\cdots a_n$ is a path of length $n$ in $Q$, then the coefficient $c_p$ satisfies
$$c_{a_1\cdots a_n} = \mu c_{\tau'(a_n)a_1\cdots a_{n-1}}$$
for a non-zero scalar $\mu\in\C^{\times}$. In particular, $a_1\cdots a_s\in\supp(\omega)$ if and only if $\tau'(a_n)a_1\cdots a_{n-1}\in\supp(\omega)$.
\end{lem}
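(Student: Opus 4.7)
The plan is to deduce the lemma directly from the twisted cyclic symmetry $\sigma^\tau(\omega)=(-1)^{n-1}\omega$ built into the definition of a twisted superpotential, using the weak invariance of the chosen basis. I would first unpack the hypothesis: for every arrow $a\in Q_1$ there is a unique non-zero scalar $\lambda_a\in\C^{\times}$ and a unique arrow $\tau'(a)\in Q_1$ with $\tau(a)=\lambda_a\,\tau'(a)$ in $\C Q$, and the map $a\mapsto\tau'(a)$ is a bijection of $Q_1$ (since $\tau$ is an algebra automorphism permuting the idempotents and sending each arrow to a scalar multiple of an arrow).

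Next I would apply $\sigma^\tau$ to the expansion $\omega=\sum_{q}c_q\,q$ and rewrite using $\tau(a)=\lambda_a\tau'(a)$:
\[
\sigma^\tau(\omega)=\sum_{q=b_1\cdots b_n}c_q\,\tau(b_n)\,b_1\cdots b_{n-1}=\sum_{q=b_1\cdots b_n}c_q\,\lambda_{b_n}\,\tau'(b_n)\,b_1\cdots b_{n-1}.
\]
Each summand on the right is now a scalar multiple of an honest path of length $n$ in $Q$, whereas by the twisted superpotential axiom this entire sum equals $(-1)^{n-1}\omega=(-1)^{n-1}\sum_q c_q\,q$. Since the paths of length $n$ form a basis of $\C Q_n$, coefficient comparison is legitimate.

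Fixing a path $p=a_1\cdots a_n$ I would compare the coefficient of $\tau'(a_n)\,a_1\cdots a_{n-1}$ on both sides. Bijectivity of $\tau'$ on $Q_1$ means that $a_n$ is the only preimage of $\tau'(a_n)$, so the only summand of $\sigma^\tau(\omega)$ producing this path is the one indexed by $q=p$, contributing $c_p\lambda_{a_n}$. Matching with the right-hand side yields
\[
c_{a_1\cdots a_n}\,\lambda_{a_n}=(-1)^{n-1}c_{\tau'(a_n)a_1\cdots a_{n-1}},
\]
which is the claimed relation with $\mu=(-1)^{n-1}\lambda_{a_n}^{-1}\in\C^{\times}$. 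The ``in particular'' part follows at once, because $\mu\neq 0$ forces $c_p$ and $c_{\sigma^{\tau'}(p)}$ to vanish simultaneously.

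There is no serious obstacle: the argument is a bookkeeping refinement of \cite[Lemma~4.3]{BSW} in which the scalar $\lambda_{a_n}$ keeps track of the failure of the basis to be strictly $\tau$-invariant. The one place where care is needed is the coefficient comparison in the last step, where it is essential that $\tau'$ is a genuine bijection of $Q_1$, so that each target path $\tau'(a_n)a_1\cdots a_{n-1}$ receives contributions from exactly one source path in $\sigma^\tau(\omega)$.
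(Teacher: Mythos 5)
Your argument is correct and is exactly the one the paper intends: the paper dispatches this lemma as an easy consequence of \cite[Lemma~4.3]{BSW} together with the relation $\tau(a)=\lambda_a\,\tau'(a)$, and your coefficient comparison in $\sigma^\tau(\omega)=(-1)^{n-1}\omega$ is precisely that bookkeeping. The only (harmless) imprecision is that the coefficient of a path $q$ in $\omega$ is $c_q\dim t(q)$ rather than $c_q$ itself, so if the comparison is made on the coefficients of $\omega$ your explicit $\mu$ acquires an extra positive factor $\dim t(\tau'(a_n)a_1\cdots a_{n-1})/\dim t(a_1\cdots a_n)$; since this factor is non-zero, the stated conclusion (and in particular the equality of supports) is unaffected.
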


The main reason we are interested in skew group algebras comes from the following corollary to Theorem \ref{thm_potential_deg_a}.

\begin{cor}\label{cor_superpot-homog-->GP1}
Suppose that $G$ is contained in $\SL(V)$. Put a grading on the path algebra $\C Q$ of the McKay quiver of $G$ in such a way that the superpotential $\omega$ described in Theorem \ref{thm_superpotential_McKay} is homogeneous of degree $1$.

Then the algebra $\Pi_G = \C Q / \langle \partial_p \omega \,,\, |p|=n-2 \rangle$, equipped with the grading induced by $\C Q$, is $n$-bimodule Calabi-Yau of Gorenstein parameter $1$. In particular, its degree zero part, if finite dimensional, is an $(n-1)$-representation infinite algebra.
\end{cor}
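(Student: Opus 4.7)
The plan is to reduce this corollary to a direct application of Theorem~\ref{thm_potential_deg_a} followed by Theorem~\ref{thm_n-RI_CYGP1}. First I would observe that the hypothesis $G\leq\SL(V)$ forces $\det_V$ to be the trivial representation, so the twist $\tau=\det_V\otimes\_$ is the identity functor on $\Irr(G)$ and the induced automorphism of $\C Q$ is trivial. Consequently, the potential $\omega$ produced by Theorem~\ref{thm_superpotential_McKay} is an honest (untwisted) superpotential and $\Pi_G = \cal D(\omega,n-2)$.

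Next, I would invoke the description of $\Pi_G$ recalled just before Theorem~\ref{thm_superpotential_McKay}: the skew group algebra $\C[V]*G$ is Morita equivalent to $\Pi_G$, and the main result of \cite{BSW} asserts that $\Pi_G$ is bimodule $n$-Calabi-Yau (ungraded), which is established precisely by showing that the complex $P_\bullet$ built from $\omega$ in \eqref{eq_resolution} is a projective resolution of $\Pi_G$ as a $\Pi_G^e$-module. Thus the exactness hypothesis of Theorem~\ref{thm_potential_deg_a} is automatic in our setting.

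Having fixed a grading on $\C Q$ that makes $\omega$ homogeneous of degree $1$, Lemma~\ref{lem_D(w,k)-graded} endows $\Pi_G$ with a positive grading (positivity follows because in our concrete setup the grading on $\C Q$ will come from a non-negative map $Q_1\to\Z$, and the relations $\partial_p\omega$ lie in this positively graded piece). Then Theorem~\ref{thm_potential_deg_a} applies verbatim with $a=1$ and yields that $\Pi_G$ is bimodule $n$-Calabi-Yau of Gorenstein parameter $1$.

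For the final sentence, I would simply appeal to Theorem~\ref{thm_n-RI_CYGP1}: the degree zero part $(\Pi_G)_0$ of any bimodule $n$-Calabi-Yau algebra of Gorenstein parameter $1$ is $(n-1)$-representation infinite provided $\dim_\C(\Pi_G)_0<\infty$, which is exactly the assumption. The only subtle point — essentially the bookkeeping of the degree shift in the self-duality isomorphism $\Phi_\bullet$ — has already been handled inside Theorem~\ref{thm_potential_deg_a}, so here it is just a matter of specialising $a=1$; no further computation is required. The only real obstacle, which however is already absorbed by citing \cite{BSW}, is the exactness of the complex $P_\bullet$, and this is where the hypothesis $G\leq\SL(V)$ (through the untwisted superpotential) is crucial.
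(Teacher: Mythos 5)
Your proposal is correct and follows essentially the same route as the paper: both arguments reduce the corollary to Theorem~\ref{thm_potential_deg_a} (specialised to $a=1$) plus Theorem~\ref{thm_n-RI_CYGP1}, with the exactness of the complex $P_\bullet$ supplied by \cite{BSW}. The only presentational difference is that the paper is slightly more explicit about why the resolution property transfers from $\C[V]*G$ to the Morita-equivalent basic algebra $\Pi_G$ (namely, that being $n$-Calabi-Yau and Koszul is Morita invariant, and \cite{BSW} shows $P_\bullet$ is a resolution for any derivation quotient algebra with these two properties), whereas you attribute this directly to the main result of \cite{BSW}; the substance is the same.
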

\begin{proof}
In \cite{BSW} the authors prove that any skew group algebra $\C[V]*G$, for a finite subgroup $G$ of $\SL(V)$, has the property of being $n$-Calabi-Yau and Koszul. Moreover they show that, for any derivation quotient algebra $A$ satisfying these two properties, the complex \eqref{eq_resolution} is a resolution of $A$. This applies in particular to $\C[V]*G$, and also to $\Pi_G$ since being $n$-Calabi-Yau and Koszul is invariant under Morita equivalence. Hence we can conclude by Theorem \ref{thm_potential_deg_a} that $\Pi_G$ is $n$-bimodule Calabi-Yau of Gorenstein parameter $1$.

The last assertion then follows directly from Theorem \ref{thm_n-RI_CYGP1}.
\end{proof}

\subsection{Subgroups of $\GL(n,\C)$ embedded in $\SL(n+1,\C)$}\label{sec_embedding_gen}

We saw previously that we can obtain examples of $(n-1)$-representation infinite algebras from skew group algebras of finite subgroups of $\SL(n,\C)$. If, instead, we start from a subgroup of $\GL(n,\C)$ not contained in $\SL(n,\C)$, then we cannot apply Corollary~\ref{cor_superpot-homog-->GP1} anymore. However, every subgroup of $\GL(n,\C)$ can be regarded as a subgroup of $\SL(n+1,\C)$ by means of the natural embedding $\GL(n,\C) \hookrightarrow \SL(n+1,\C)$ given by
\[
X \mapsto \left(
\begin{array}{c|c}
X  & 0 \\ \hline
0 & \frac{1}{\det(X)}
\end{array}\right).
\]

For a finite subgroup $G$ of $\GL(V)$, where $V$ is a $\C$-vector space of dimension $n$, we denote by $G'$ its image under the above embedding, so that $G'$ is a subgroup of $\SL(W)$ for a vector space $W\supseteq V$ of dimension $n+1$. Let $Q$ be the McKay quiver of $G$ relative to $V$ and $\omega$ be the associated twisted superpotential: if $Q'$ denotes the McKay quiver of $G'$ relative to $W$, then it is known (see for example \cite{Guo}) that a superpotential $\omega'$ for $\Pi_{G'}$ can be obtained from $\omega$ by adding some arrows. More precisely we have the following.

\begin{prop}\label{prop_emb-quiver}
Fix a basis for the arrows of $Q$ which is invariant, up to multiplication by a non-zero scalar, under the twist. Then $Q$ can be viewed as a subquiver of $Q'$, and the latter can be obtained from the former by adding an arrow $i\to \tau(i)$ for each vertex $i\in Q_0$. Moreover, the support of the superpotential $\omega'$ is obtained from the one of $\omega$ by adding these arrows. More precisely, we have that a path
$$ \tau(i_n) \to i_1 \to \dots \to i_n $$
is in $\supp(\omega)$ if and only if the path
$$ \tau(i_n) \to i_1 \to \dots \to i_n \to \tau(i_n)$$
is in $\supp(\omega')$, and all the paths in $\supp(\omega')$, up to cyclic permutation, are obtained in this way.
\end{prop}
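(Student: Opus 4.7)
The plan is to first identify $W$ explicitly as a $G$-representation via the embedding: since the new factor carries the action $g\mapsto \det(g)^{-1}$, we have $W\cong V\oplus L$ where $L$ is the one-dimensional representation $\det_{V^*}$, i.e.\ exactly the representation defining $\tau^-$. Because the embedding is a group isomorphism onto $G'$, I will identify $\Irr(G)=\Irr(G')$, and then decompose
\[
\Hom_{G'}(S,W\otimes T)=\Hom_G(S,V\otimes T)\oplus\Hom_G(S,L\otimes T).
\]
The first summand reproduces the arrows of $Q$, while $\Hom_G(S,L\otimes T)=\Hom_G(S,\tau^-(T))$ is by Schur's lemma one-dimensional when $T=\tau(S)$ and zero otherwise. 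This gives the description of $Q'$ as $Q$ together with one extra arrow $i\to\tau(i)$ for each vertex $i$.

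Next I will analyze the superpotential. Because $G'\subseteq\SL(W)$ the twist on $Q'$ is trivial, so by Theorem~\ref{thm_superpotential_McKay} one has $\omega'=\sum_{|p|=n+1}(c_p\dim t(p))p$ with $c_p$ obtained from the composition involving the antisymmetrizer $\alpha^{n+1}_W$. The key computation is
\[
\textstyle\bigwedge^{n+1}(V\oplus L)=\bigwedge^{n+1}V\,\oplus\,\bigl(\bigwedge^{n}V\otimes L\bigr)=\bigwedge^{n}V\otimes L,
\]
since $\dim V=n$ and $\dim L=1$. Consequently the restriction of $\alpha^{n+1}_W$ to $W^{\otimes(n+1)}$ vanishes on any summand in the tensor expansion that does not contain exactly one $L$-factor. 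Since an arrow of $Q$ factors through $V\otimes T$ and a new arrow factors through $L\otimes T$, this forces the support of $\omega'$ to consist of paths containing exactly one new arrow.

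Using Lemma~\ref{lem_cyclic-permutation} (applied to $Q'$ with trivial twist, i.e.\ ordinary cyclic invariance of the support), I can cyclically rotate any path in $\supp(\omega')$ so that the unique new arrow sits in the last position. Writing such a path as $a_0\cdots a_{n-1}b$ with $b\colon i_n\to\tau(i_n)$ the new arrow and $a_0\cdots a_{n-1}$ a path in $Q$ from $\tau(i_n)$ to $i_n$, I will then compare the two scalars: the composition defining $c_{a_0\cdots a_{n-1}b}$ in $Q'$ factors through $V^{\otimes n}\otimes L\otimes S_{i_{n+1}}$, and under the canonical isomorphism $\det_V\otimes L\xrightarrow{\sim}\det_W$ the map $\alpha^{n+1}_W|_{V^{\otimes n}\otimes L}$ agrees, up to a nonzero combinatorial constant independent of the path, with $\alpha^n_V\otimes\id_L$. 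Hence $c_{a_0\cdots a_{n-1}b}$ is a fixed nonzero multiple of $c_{a_0\cdots a_{n-1}}$ computed in $\Pi_G$, which gives the claimed bijection between the two supports.

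The main obstacle I anticipate is step three, namely making the comparison of the two antisymmetrizers precise and checking that the proportionality constant does not depend on the chosen path — one has to keep careful track of where the unique $L$-factor sits among the $n+1$ tensor slots and how the sign from the alternation interacts with inserting this factor in the last position. Everything else is a fairly mechanical unpacking of Definition~\ref{def_McKay} and Theorem~\ref{thm_superpotential_McKay}.
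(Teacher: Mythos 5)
Your argument is correct. Note that the paper itself does not prove Proposition~\ref{prop_emb-quiver} (it is quoted from the literature, citing Guo), so there is no internal proof to compare against; the route you take is the natural one and it works. The three steps all check out: $W\cong V\oplus\det_{V^*}$ identifies the new arrows with bases of the one-dimensional spaces $\Hom_G(S,\det_{V^*}\otimes\tau(S))$; the identity $\bigwedge^{n+1}W\cong\bigwedge^{n}V\otimes\det_{V^*}$ kills every tensor summand with zero or at least two $L$-factors, forcing exactly one new arrow per cycle of $\supp(\omega')$; and after rotating that arrow to the last slot the length-$(n+1)$ composition factors through $\alpha^n_V\otimes\id$ applied to the length-$n$ composition. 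The one point to state more carefully is the proportionality constant in your final step: besides the fixed constant coming from comparing the two antisymmetrizers, it contains the scalar by which the chosen new arrow $b$ realizes the isomorphism $S_{i_n}\cong\det_{V^*}\otimes\tau(S_{i_n})$, so it depends on the vertex $i_n$ and on the choice of basis vector, not only on combinatorics. Since it is nonzero in every case, this does not affect the equality of supports, which is all the proposition asserts.
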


\section{Metacyclic groups}

We now introduce a family of groups to which we will later apply the results of the previous sections. All groups in this family will satisfy the following property.

\begin{defn}
A group $G$ is metacyclic if it has a normal cyclic subgroup $A$ such that $G/A$ is cyclic.
\end{defn}

Some generalities about metacyclic groups can be found in \cite[\S~47]{CR}. In particular, it is shown in loc. cit. that one can associate to a metacyclic group some integers which must satisfy certain conditions. On the contrary, in the following we will start with integers satisfying such conditions and associate to them a metacyclic group embedded in a general linear group.

\begin{defn}
Let $m,r,s,t$ be positive integers satisfying the following conditions:
\begin{enumerate}[label=(M\arabic*)]
\item $(m,r)=1$, where $(m,r)$ indicates the greatest common divisor of $m$ and $r$; \label{item_(m,r)=1}
\item $r^s\equiv 1 \pmod m$; \label{item_r^s=1(m)}
\item $(r-1)t\equiv 0 \pmod m$. \label{item_(r-1)t=0(m)}
\end{enumerate}
Define $G$ to be the finite subgroup of $\GL(s,\C)$ generated by the following matrices:
\[
\alpha = \left(
\begin{array}{cccc}
\epsilon_m & 0 & \cdots & 0 \\
0 & \epsilon_m^r & \cdots & 0 \\
\vdots & \vdots & \ddots & \vdots \\
0 & 0 & \cdots & \epsilon_m^{r^{s-1}}
\end{array}\right),
\qquad
\beta = \left(
\begin{array}{ccccc}
0 & 0 & \cdots & 0 & \epsilon_m^t \\
1 & 0 & \cdots & 0 & 0 \\
\vdots & \ddots & \ddots & \vdots & \vdots \\
\vdots & \ddots & \ddots & \vdots & \vdots \\
0 & 0 & \cdots & 1 & 0
\end{array}\right),
\]
where $\epsilon_m$ is a fixed primitive $m$-th root of unity. We will refer to $G$ as the \emph{metacyclic group associated to $m,r,s,t$} and we will denote by $A$ the subgroup of $G$ generated by $\alpha$.
\end{defn}

We now make some comments on the above definition. By \ref{item_(m,r)=1} we have that $A$ is cyclic of order $m$, and it is normal in $G$ because, by \ref{item_r^s=1(m)}, $\beta^{-1}\alpha\beta=\alpha^r$. Condition \ref{item_(r-1)t=0(m)} implies that $\beta^s=\alpha^t$, hence it easy to see that $G/A$ is a cyclic group of order $s$ generated by the class of $\beta$. This shows that $G$ is metacyclic; in particular, the order of $G$ is $sm$.

At a later stage we will need to consider additional conditions on the integers $m,r,s,t$, in order to apply the results of the previous sections or to simplify some calculations. For convenience we will list all of them now and refer to them in the following whenever they will be needed:
\begin{enumerate}[label=(M\arabic*)]\setcounter{enumi}{3}
\item $s$ is a prime number; \label{item_s-prime}
\item $r \not\equiv 1 \pmod m$; \label{item_r-not1}
\item $m=sn$ for an integer $n$; \label{item_m=sn}
\item $r-1=sb$ for an integer $b$. \label{item_r=1(s)}
\end{enumerate}

Let us make a brief comment on these additional conditions. We will need \ref{item_s-prime} because it implies that $G/A$ has prime order, and so it is a simple group: this will simplify a lot the description of the irreducible representation of $G$, allowing us to use \cite[Corollary~47.14]{CR} in the next subsection. Condition \ref{item_r-not1} will be used from Section~\ref{sec_sga-of-metacyclic-groups}: in particular it implies that $G$ is not abelian, thus ensuring that we are not in the case already studied in \cite{HIO}. Another consequence of \ref{item_r-not1} is Proposition~\ref{prop_r_not_equiv_1}, thanks to which we will have fewer cases to analyse in the study of the superpotential associated to $G$ in Section~\ref{sec_sga-of-metacyclic-groups}. Conditions \ref{item_m=sn} and \ref{item_r=1(s)} will be introduced in Section~\ref{sec_cuts} in order to prove the existence of gradings which satisfy the hypotheses of Corollary~\ref{cor_superpot-homog-->GP1}.

\subsection{Irreducible representations}

The representation theory of metacyclic groups is well known. Here we summarize the description of their irreducible representations, in order to fix some notation for the following sections.

Let $G$ be the metacyclic group associated to integers $m,r,s,t$. From now on we will suppose that $G$ satisfies also condition \ref{item_s-prime}. Note that this implies that the quotient $G/A$ is simple, because it is cyclic of prime order.

Let $\Irr(A)=\{S_i\}_{i=0}^{m-1}$ be a complete set of non-isomorphic irreducible representations of $A$. Since $A$ is abelian, the $S_i$'s are all $1$-dimensional: we put $S_i=\C v_i$ and assume that the action of $A$ on $S_i$ is given by $\alpha v_i=\epsilon_m^i v_i$. We will often consider the indices $i$ as integers modulo $m$ and identify $\Irr(A)$ with $\Z/m\Z$ via $S_i \leftrightarrow i$. Note in particular that this gives $S_i \otimes S_j \cong S_{i+j}$.

Consider now the induced representations $\Ind^G_A(S_i)=:T_i$. We choose as a basis of $T_i$ the set $\{v_i,\beta v_i,\dots,\beta^{s-1}v_i\}$, so $G$ acts on $T_i$ in the following way:
$$\alpha(\beta^k v_i) =  \epsilon_m^{r^ki} \beta^k v_i, \qquad k=0,\dots,s-1, $$
$$\beta(\beta^k v_i) =  \beta^{k+1} v_i, \qquad k=0,\dots,s-2, $$
$$\beta(\beta^{s-1} v_i) =  \beta^s v_i = \alpha^t v_i = \epsilon_m^{ti} v_i.$$

Thus the matrices of $\alpha$ and $\beta$ with respect to the action on $T_i$ are
\[
\alpha \mapsto \left(
\begin{array}{cccc}
\epsilon_m^i & 0 & \cdots & 0 \\
0 & \epsilon_m^{ri} & \cdot & 0 \\
\vdots & \vdots & \ddots & \vdots \\
0 & 0 & \cdots & \epsilon_m^{r^{s-1}i}
\end{array}\right),
\qquad
\beta \mapsto \left(
\begin{array}{ccccc}
0 & 0 & \cdots & 0 & \epsilon_m^{ti} \\
1 & 0 & \cdots & 0 & 0 \\
\vdots & \ddots & \ddots & \vdots & \vdots \\
\vdots & \ddots & \ddots & \vdots & \vdots \\
0 & 0 & \cdots & 1 & 0
\end{array}\right).
\]

The following proposition tells us when the representation $T_i$ is irreducible.
\begin{prop}[\cite{CR}]\label{prop_irrG}
\begin{enumerate}[label=(\alph*)]
\item $T_i$ is irreducible if and only if $r^ki \not\equiv i \pmod m$ for all $k=1,\dots,s-1$;
\item $T_i \cong T_j$ if and only if there exist a $k$ such that $r^ki=j$.
\end{enumerate}
\end{prop}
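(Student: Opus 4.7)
The plan is to reduce both parts to an explicit description of $\Res^G_A T_i$ and then apply Frobenius reciprocity. Using the chosen basis $\{\beta^k v_i\}_{k=0}^{s-1}$ of $T_i$, and the relation $\beta^{-1}\alpha\beta=\alpha^r$ (which follows from the explicit matrices for $\alpha$ and $\beta$ together with condition~\ref{item_r^s=1(m)}), iteration yields $\alpha\beta^k = \beta^k\alpha^{r^k}$, hence
\[
\alpha\cdot\beta^k v_i = \beta^k\alpha^{r^k}v_i = \epsilon_m^{r^k i}\,\beta^k v_i.
\]
Therefore each line $\C\beta^k v_i$ is an $A$-submodule isomorphic to $S_{r^k i}$, and
\[
\Res^G_A T_i \cong \bigoplus_{k=0}^{s-1} S_{r^k i}.
\]

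For~(a), since $S_i$ is one-dimensional and hence irreducible, Frobenius reciprocity gives
\[
\langle\chi_{T_i},\chi_{T_i}\rangle_G = \langle\chi_{S_i},\Res^G_A\chi_{T_i}\rangle_A = \#\{k\in\{0,\dots,s-1\}:\, r^k i\equiv i\pmod m\}.
\]
The representation $T_i$ is irreducible if and only if this inner product equals $1$, which is precisely the condition $r^k i\not\equiv i\pmod m$ for $k=1,\dots,s-1$. (This also matches Mackey's irreducibility criterion for induction from the normal subgroup $A$.)

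For~(b), one direction is immediate from the restriction formula: if $T_i\cong T_j$, then comparing $\Res^G_A T_j\cong\bigoplus_k S_{r^k j}$ with $\Res^G_A T_i\cong\bigoplus_k S_{r^k i}$ forces $S_j$ to appear among the $S_{r^k i}$, i.e.\ $j\equiv r^k i\pmod m$ for some $k$. For the converse, assume $j\equiv r^k i\pmod m$ and define $\psi\colon S_j\to T_i$ by $v_j\mapsto \beta^k v_i$; by the computation above this is $A$-equivariant, so by Frobenius reciprocity it extends uniquely to a $G$-map $\tilde\psi\colon T_j=\Ind^G_A S_j\to T_i$. Its image contains $v_i=\beta^{-k}(\beta^k v_i)$, which generates $T_i$ as a $G$-module, so $\tilde\psi$ is surjective, and a dimension count then shows it is an isomorphism.

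The argument is essentially routine character theory; the only mild subtlety is to use the abstract Frobenius-reciprocity construction in~(b) rather than writing an isomorphism on the basis $\{\beta^\ell v_j\}_\ell$ explicitly, which would force one to keep track of the scalars introduced by the twist $\beta^s=\alpha^t$ from condition~\ref{item_(r-1)t=0(m)} when the index $k+\ell$ wraps around modulo $s$.
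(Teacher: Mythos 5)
The paper does not prove this proposition; it is quoted from Curtis--Reiner with no argument given. Your proof is correct: the computation $\alpha\beta^k v_i=\epsilon_m^{r^ki}\beta^kv_i$ (which the paper itself records when describing the action on $T_i$) gives $\Res^G_A T_i\cong\bigoplus_k S_{r^ki}$, and both parts then follow by Frobenius reciprocity exactly as you say; your care in extending an $A$-map $S_j\to\Res^G_A T_i$ via adjunction rather than writing the isomorphism on basis vectors correctly sidesteps the wrap-around scalars coming from $\beta^s=\alpha^t$. This is the standard argument for induction from a normal subgroup of prime index and is presumably the one in the cited reference, so there is nothing to flag.
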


\begin{rem}\label{rem_a'}
Note that, since $s$ is prime, we can replace the condition (a) above by:
\textit{\begin{enumerate}[label=(\alph*')]
\item $T_i$ is irreducible if and only if $ri \not\equiv i \pmod m$.
\end{enumerate}}
\end{rem}

Suppose now that $ri \equiv i \pmod m$, so that $T_i$ is not irreducible. The matrix
\[\left(
\begin{array}{ccccc}
0 & 0 & \cdots & 0 & \epsilon_m^{ti} \\
1 & 0 & \cdots & 0 & 0 \\
\vdots & \ddots & \ddots & \vdots & \vdots \\
\vdots & \ddots & \ddots & \vdots & \vdots \\
0 & 0 & \cdots & 1 & 0
\end{array}\right),
\]
which represents the action of $\beta$, has characteristic polynomial $p(\lambda)=(-1)^s(\lambda^s-\epsilon_m^{ti})$ and so it is diagonalisable. Its eigenvalues are $\{ \lambda_{i,\ell} \,|\, \ell=0,\dots,s-1 \}$, where $\lambda_{i,\ell}:=\eta_i \epsilon_s^\ell$ and $\eta_i:=\epsilon_m^{\frac{t}{s}i}$ is a fixed $s$-th root of $\epsilon_m^{ti}$. The elements
$$w_i^{(\ell)}:= \sum_{k=0}^{s-1} \lambda_{i,\ell}^{s-1-k} \beta^k v_i, \qquad \ell=0,\dots,s-1,$$
provide a basis of eigenvectors for $\beta$. Indeed,
\[
\beta w_i^{(\ell)} = \sum_{k=0}^{s-1} \lambda_{i,\ell}^{s-k-1} \beta(\beta^k v_i) = \sum_{h=1}^{s-1} \lambda_{i,\ell}^{s-h} \beta^h v_i + \epsilon_m^{ti}v_i = \lambda_{i,\ell} \sum_{h=1}^{s-1} \lambda_{i,\ell}^{s-h-1} \beta^h v_i + \lambda_{i,\ell}^s v_i = \lambda_{i,\ell}\, w_i^{(\ell)}
\]
and
\[
\alpha w_i^{(\ell)} = \sum_{k=0}^{s-1} \lambda_{i,\ell}^{s-k-1} \alpha(\beta^k v_i) = \sum_{k=0}^{s-1} \lambda_{i,\ell}^{s-k-1} \epsilon_m^{r^k i} \beta^k v_i = \sum_{k=0}^{s-1} \lambda_{i,\ell}^{s-k-1} \epsilon_m^i \beta^k v_i = \epsilon_m^i w_i^{(\ell)},
\]
where, in the second equality, we used the fact that $r^k i \equiv i \pmod m$.

Hence the $1$-dimensional subspace $T_i^{(\ell)}:= \C w_i^{(\ell)}$ is a subrepresentation of $T_i$, and we have a decomposition
$$T_i \cong \bigoplus_{\ell=0}^{s-1} T_i^{(\ell)}.$$

\section{The skew group algebra of a metacyclic group}\label{sec_sga-of-metacyclic-groups}

In this section, $G$ will be the metacyclic group associated to some integers $m,r,s,t$ satisfying conditions \ref{item_(m,r)=1},\dots,\ref{item_r-not1}. Note that \ref{item_r-not1} implies that $G$ is not abelian: we are assuming it because the case of skew group algebras of abelian groups has already been studied in \cite{HIO}. Moreover, this assumption will simplify a lot some calculations at a later stage (see Proposition~\ref{prop_r_not_equiv_1}).

The action of $G/A$ on $G$ by conjugation induces an automorphism $\varphi$ of $G$ given by $\varphi(g)=\beta^{-1}g\beta$. This in turn induces an action of $G/A$ on $\Irr(A)$ given by $\varphi(i)=ri$. From now on we will fix a set $\cal D$ of representatives in $\Z/m\Z$ of this action and we will denote by $\cal F$ the set of fixed points. Note that the orbits of the fixed points have cardinality $1$, so we have $\cal F\subseteq\cal D$ regardless of what choice for $\cal D$ we made. By Proposition~\ref{prop_irrG} we have the following result.

\begin{prop}\label{prop_irrG2}
The set $\Irr(G) = \{T_i \,|\, i\in \cal D \smallsetminus \cal F\} \cup  \{T_i^{(\ell)} \,|\, i\in \cal F \,, \ell=0,\dots,s-1\}$ is a complete set of nonisomorphic irreducible representations of $G$. Moreover we have that $\dim_\C T_i=s$ and $\dim_\C T_i^{(\ell)}=1$.
\end{prop}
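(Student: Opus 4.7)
The plan is to assemble the list of irreducibles from the two constructions already set up in the text (induced representations $T_i$ and their $\beta$-eigenspaces $T_i^{(\ell)}$), and then verify irreducibility, pairwise non-isomorphism, and completeness by a dimension count.

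First I would handle irreducibility. For $i\in\cal D\smallsetminus\cal F$ we have by definition $ri\not\equiv i\pmod m$, so Proposition~\ref{prop_irrG}(a), in the form of Remark~\ref{rem_a'}, directly gives that $T_i$ is irreducible; its dimension is $\dim_\C\Ind^G_A(S_i)=[G:A]\cdot\dim_\C S_i=s$. For $i\in\cal F$, the representations $T_i^{(\ell)}$ are $1$-dimensional by construction (they are the $\C$-spans of the eigenvectors $w_i^{(\ell)}$) and are therefore automatically irreducible as soon as we know they are genuine subrepresentations, which is exactly what the computations preceding the statement established.

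Next I would check that no two representations in the proposed list are isomorphic. For the $T_i$ with $i\in\cal D\smallsetminus\cal F$ this is Proposition~\ref{prop_irrG}(b): $T_i\cong T_j$ forces $j=r^k i$ for some $k$, i.e.\ $i$ and $j$ lie in the same $\langle r\rangle$-orbit, so the choice $i,j\in\cal D$ of orbit representatives forces $i=j$. For the one-dimensional $T_i^{(\ell)}$ with $i\in\cal F$, an isomorphism $T_i^{(\ell)}\cong T_j^{(\ell')}$ would force the scalars by which $\alpha$ and $\beta$ act to agree, giving $\epsilon_m^i=\epsilon_m^j$ (hence $i=j$ in $\Z/m\Z$, and in $\cal F\subseteq\cal D$ this is a genuine equality) and $\eta_i\epsilon_s^\ell=\eta_j\epsilon_s^{\ell'}$ (hence $\ell=\ell'$). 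No $T_i$ is isomorphic to any $T_j^{(\ell)}$ since $s\geq 2$ forces a dimension mismatch.

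Finally I would establish completeness via the sum-of-squares formula $\sum_{\rho\in\Irr(G)}(\dim\rho)^2=|G|=sm$. Because $G/A$ is cyclic of prime order $s$, every $\langle r\rangle$-orbit on $\Z/m\Z$ has size $1$ (a fixed point) or size $s$, so $m=|\cal F|+s\cdot|\cal D\smallsetminus\cal F|$. The proposed list then contributes
\begin{equation*}
\sum_{i\in\cal D\smallsetminus\cal F} s^2\;+\;\sum_{i\in\cal F}\sum_{\ell=0}^{s-1} 1
\;=\;s^2\cdot|\cal D\smallsetminus\cal F|+s\cdot|\cal F|
\;=\;s\bigl(s\cdot|\cal D\smallsetminus\cal F|+|\cal F|\bigr)=sm=|G|,
\end{equation*}
so by the sum-of-squares criterion the list is exhaustive. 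There is no real obstacle here; the only subtle point is ensuring that the fixed points in $\cal F$ are genuinely distinct in $\Z/m\Z$, which follows from $\cal F\subseteq\cal D$, and that the orbit-size dichotomy used in the counting relies on $s$ being prime (condition~\ref{item_s-prime}).
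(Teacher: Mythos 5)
Your proposal is correct. The irreducibility and pairwise non-isomorphism arguments coincide with what the paper does implicitly: the paper simply asserts the proposition ``by Proposition~\ref{prop_irrG}'', relying on the classification of irreducibles of metacyclic groups from Curtis--Reiner (the paper points to \cite[Corollary~47.14]{CR} for this), which already packages irreducibility, distinctness \emph{and} completeness. Where you genuinely diverge is the completeness step: instead of citing that classification, you verify exhaustiveness by the sum-of-squares criterion, using that the $\langle r\rangle$-orbits on $\Z/m\Z$ have size $1$ or $s$ (here condition \ref{item_s-prime} is essential, as you note) to get $m=|\cal F|+s\,|\cal D\smallsetminus\cal F|$ and hence $s^2|\cal D\smallsetminus\cal F|+s|\cal F|=sm=|G|$. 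This buys a self-contained, elementary argument at the cost of a small amount of counting; the paper's route buys brevity at the cost of an external reference. Both are sound, and your explicit check that distinct $\ell$ give non-isomorphic $T_i^{(\ell)}$ (via the distinct eigenvalues $\eta_i\epsilon_s^\ell$ of $\beta$) is a detail worth having on record.
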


\paragraph*{\textbf{Notation. }} For each $i\in\Z/m\Z$ we call $\ul{i}$ its representative in $\cal D$, i.e. the only element of the $G/A$-orbit of $i$ which is contained in $\cal D$ (note that $\ul i=i$ if $i\in\cal D$). We fix an integer $\kappa_i\in\{0,\dots,s-1\}$ such that $r^{\kappa_i}\ul{i} \equiv i$. If $i\in \cal D \smallsetminus \cal F$ it is clear, by Proposition~\ref{prop_irrG}(a), that we can choose $\kappa_i$ in a unique way; otherwise, for a fixed point $i\in\cal F$, we set $\kappa_i:=0$.

\begin{example}
Let $m=21$, $r=4$, $s=3$, $t=0$, so the action of $G/A$ on $\Z/21/\Z$ is given by the multiplication by $4$. A possible choice of representatives is
$$\cal D = \{0,4,7,8,9,12,13,14,17\} \subseteq \Z/21\Z.$$
In this case we have, for example, $\ul1=\ul4=\ul{16}=4$ and $\kappa_1=2$, $\kappa_4=0$, $\kappa_{16}=1$. We will analyse this example in detail in Example~\ref{ex_s=3}.
\end{example}

Call $V$ the $s$-dimensional natural representation of $G$, which coincides with $T_1$. We may note that, by Remark~\ref{rem_a'}, condition \ref{item_r-not1} is equivalent to say that $V$ is irreducible. It is easy to see that, for all $i$, we have an isomorphism $\Res^G_A(T_i) \cong \bigoplus_{k=0}^{s-1} S_{r^ki}$, $\beta^k v_i \mapsto v_{r^ki}$. In the case where $i$ is a fixed point, we have an isomorphism $\Res^G_A(T_i^{(\ell)}) \cong S_i$, $w_i^{(\ell)} \mapsto v_i$. We will call again $V$ the restriction of $V$ to $A$, which clearly coincides with the natural representation of $A$ and is isomorphic to $\bigoplus_{k=0}^{s-1} S_{r^k}$.

Our strategy for describing the skew group algebra of $G$ will exploit the action of $G/A$ on the McKay quiver of $A$. The structure of the latter, and also the relations which give an isomorphism with the skew group algebra of $A$, are well known. The following description is a particular case of \cite[Corollary~4.1]{BSW}.

\begin{prop}[\cite{BSW}]\label{prop_quiver_A}
Let $Q_A$ be the McKay quiver of $A$. Then the following holds.
\begin{itemize}
\item $Q_A$ has vertices $\Z/m\Z$ and an arrow $x_k^i \colon i \to i-r^k$ for each $i\in \Z/m\Z$ and $k=0,\dots,s-1$ (sometimes, if this does not cause confusion, we will omit the superscript and write $x_k$ in place of $x_k^i$).
\item Let $\mathfrak{S}_s$ be the symmetric group on $\{0,\dots,s-1\}$. For each permutation $\sigma\in\mathfrak{S}_s$ and each vertex $i$, define a path
$$ \bf p_\sigma^i := x_{\sigma(s-1)} \cdots x_{\sigma(0)} \colon i \to i-\textstyle\sum_{k=0}^{s-1}r^{\sigma(k)}. $$
Then the superpotential of $Q_A$ is given by
$$ \omega_A = \sum_{i\in(Q_A)_0} \sum_{\sigma\in\mathfrak{S}_s} (-1)^{\sigma}\bf p_\sigma^i,$$
where $(-1)^{\sigma}$ is the sign of the permutation $\sigma$.
\end{itemize}
In particular, the skew group algebra $\C[V]*A$ is isomorphic to the path algebra of $Q_A$ modulo the relations
$$ \{x_h^{i-r^k} x_k^i = x_k^{i-r^h} x_h^i \,|\, i\in \Z/m\Z,\, 0 \leq k,h \leq s-1\}.$$
\end{prop}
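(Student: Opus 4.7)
My plan is to treat this as a direct application of the general description of the McKay algebra $\Pi_G$ from Section~\ref{sec_Skew group algebras and McKay quivers} to the specific case $G=A$, making each ingredient explicit via the $A$-module decomposition $V\cong\bigoplus_{k=0}^{s-1}S_{r^k}$. Since $A$ is abelian and all its irreducibles are $1$-dimensional, $\C[V]*A$ is already basic, so the Morita equivalence with $\Pi_A$ becomes an actual isomorphism, which is what we need for the final assertion.

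First I would identify the quiver. The vertex set is $\Irr(A)\cong \Z/m\Z$ via $S_i \leftrightarrow i$, and, using $V\cong\bigoplus_{k}S_{r^k}$ together with $S_{r^k}\otimes S_j \cong S_{r^k+j}$, I get
\[
\Hom_A(S_i,\, V\otimes S_j) \;\cong\; \bigoplus_{k=0}^{s-1}\Hom_A(S_i,S_{r^k+j}),
\]
and each summand is one-dimensional when $j=i-r^k$. Choosing the inclusion $v_i\mapsto v_{r^k}\otimes v_{i-r^k}$ as the canonical basis element supplies the arrow $x_k^i\colon i\to i-r^k$, with one arrow for each $k\in\{0,\dots,s-1\}$.

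Next I would compute the (twisted) superpotential via Theorem~\ref{thm_superpotential_McKay}. For a path $p=x_{k_{s-1}}\cdots x_{k_0}$ of length $s$ starting at $i$, tracing the composition~\eqref{eq_composition_superpotential} shows that $v_i$ is sent to $v_{r^{k_0}}\otimes v_{r^{k_1}}\otimes\cdots\otimes v_{r^{k_{s-1}}}\otimes v_{i-\sum_j r^{k_j}}$ inside $V^{\otimes s}\otimes S_{i-\sum_j r^{k_j}}$. Applying the antisymmetrizer $\alpha^s_V$ and fixing $v_1\wedge v_r\wedge\cdots\wedge v_{r^{s-1}}$ as a basis of $\det V$, the result vanishes unless the tuple $(k_0,\dots,k_{s-1})$ is a permutation $\sigma\in\mathfrak{S}_s$ of $(0,\dots,s-1)$, in which case the antisymmetrizer outputs the sign $(-1)^\sigma$. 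Hence $c_p=(-1)^\sigma$ precisely for $p=\bf p_\sigma^i$ and is zero otherwise; since $\dim t(p)=1$ at every vertex, Theorem~\ref{thm_superpotential_McKay} gives the stated formula $\omega_A=\sum_i\sum_\sigma (-1)^\sigma \bf p_\sigma^i$.

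Finally, for the presentation of $\Pi_A=\cal D(\omega_A,s-2)$ by commutation relations, I would compute $\partial_q\omega_A$ for a path $q$ of length $s-2$. Because each summand $\bf p_\sigma^i$ contains every letter $x_0,\dots,x_{s-1}$ exactly once, fixing the leftmost $s-2$ letters of $\bf p_\sigma^i$ equal to $q$ forces $\sigma$ on positions $2,\dots,s-1$, leaving only the pair $\{\sigma(0),\sigma(1)\}=\{h,k\}$ free; the two orderings contribute with opposite signs, yielding exactly the commutator relation $x_h^{i-r^k}x_k^i-x_k^{i-r^h}x_h^i$ at the appropriate starting vertex. The main obstacle in this whole argument is simply bookkeeping: matching the right-to-left composition convention for paths with the left-to-right convention for the tensor composition, checking that the twist $\tau=\det V\otimes(-)$ (which is nontrivial when $A\not\subseteq\SL(V)$) is automatically respected by the paths $\bf p_\sigma^i$ since $s(\bf p_\sigma^i)-t(\bf p_\sigma^i)=1+r+\cdots+r^{s-1}$, and ensuring the sign from the antisymmetrizer lands as $(-1)^\sigma$ with the fixed ordering of the basis of $\det V$.
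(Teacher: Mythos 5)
Your proposal is correct and is essentially the computation that the paper delegates: the paper's ``proof'' of this proposition consists of citing \cite{BSW} (Corollary~4.1 there) and recording exactly the basis choice $v_i\mapsto v_{r^k}\otimes v_{i-r^k}$ that you also fix, so your explicit verification of the arrows, the coefficients $c_p=(-1)^\sigma$ via the antisymmetrizer, and the commutator relations from $\partial_q\omega_A$ is a sound unwinding of the same argument. The points you flag as bookkeeping (the twist by $\det_V\cong S_c$ when $A\not\subseteq\SL(V)$, and the sign conventions) are indeed the only delicate spots, and you handle them correctly.
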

\begin{proof}
The proof is given in \cite[Corollary~4.1]{BSW}. Here we only mention, since it will be needed later, which basis of arrows we choose.

For each $i\in\Z/m\Z$ and $k=0,\dots,s-1$, we will take $x^i_k$ to be the morphism in $\Hom_A(S_i,V\otimes S_{i-r^k}) \cong \Hom_A(S_{r^k}\otimes S_{i-r^k},V\otimes S_{i-r^k})$ defined by $v_{r^k}\otimes v_{i-r^k} \mapsto v_{r^k}\otimes v_{i-r^k}$.
\end{proof}

\subsection{The vertices and arrows of $Q_G$}

Now we will consider the McKay quiver of $G$, which we denote by $Q_G$. Proposition \ref{prop_irrG2} gives us a description of its vertices: we will call $i$ the vertex corresponding to $T_i$ for $i\in\cal D \smallsetminus \cal F$, and $i^{(\ell)}$ the vertex corresponding to $T_i^{(\ell)}$ for $i\in\cal F$, $\ell=0,\dots,s-1$. By an abuse of terminology, for $i\in\cal F$, we will call ``fixed points'' both the vertex $i\in (Q_A)_0$ and each of the vertices $i^{(\ell)}\in (Q_G)_0$, for $\ell=0,\dots,s-1$.

We will now describe the arrows of $Q_G$. Recall that in order to do this we must choose a basis of the vector space $\Hom_G(S,V\otimes T)$ for all $S,T\in \Irr(G)$.

First note that for $i,j\in\Z/m\Z$ we have, by the isomorphisms discussed in Section~\ref{subsec_repgroups},
\begin{align}
\Hom_G(T_i, V\otimes T_j) & = \Hom_G(\Ind^G_A(S_i), V\otimes T_j) \cong \Hom_A(S_i, \Res^G_A(V\otimes T_j)) \nonumber \\
 & \cong \Hom_A(S_i, V\otimes \Res^G_A(T_j)) \cong \Hom_A(S_i, V\otimes (\textstyle\bigoplus_{k=0}^{s-1} S_{r^kj})). \label{eq_indres-for-arrows}
\end{align}
By Proposition \ref{prop_quiver_A} $\Hom_A(S_i, V\otimes (\bigoplus_{k=0}^{s-1} S_{r^kj}))$ is generated by elements of the form $x_p^i$, for some $a,p\in \{0,\dots,s-1\}$ which satisfy $r^aj \equiv i-r^p \pmod m$. Now take such an element $x_p^i\in \Hom_A(S_i, V\otimes (\bigoplus_{k=0}^{s-1} S_{r^kj}))$ and call $x^i_{p,a}$ its image in $\Hom_G(T_i, V\otimes T_j)$ under the inverses of the isomorphisms \eqref{eq_indres-for-arrows}. Then, recalling that we identify $\beta^a v_j\in\Res^G_A(T_j)$ with $v_{r^aj}\in \bigoplus_{k=0}^{s-1} S_{r^kj}$, it is easy to see that this homomorphism is explicitly given by
\begin{align*}
x^i_{p,a} \colon  T_i & \longrightarrow V\otimes T_j \\
  v_i & \longmapsto \beta^pv_1 \otimes \beta^av_j.
\end{align*}

Before going on we make the following observation.

\begin{prop}\label{prop_r_not_equiv_1}
In the quivers $Q_G$ and $Q_A$ there are no arrows between two fixed points.
\end{prop}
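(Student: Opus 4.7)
The plan is to show both cases reduce to the same modular arithmetic argument, using conditions \ref{item_(m,r)=1} and \ref{item_r-not1}.

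\textbf{Step 1 (the quiver $Q_A$).} First I would treat $Q_A$, where a generic arrow has the form $x^i_k\colon i\to i-r^k$. Suppose both $i$ and $i-r^k$ lie in $\cal F$, i.e.\ $(r-1)i\equiv 0\pmod m$ and $(r-1)(i-r^k)\equiv 0\pmod m$. Subtracting gives $(r-1)r^k\equiv 0\pmod m$. By \ref{item_(m,r)=1} the element $r^k$ is a unit modulo $m$, so $r\equiv 1\pmod m$, contradicting \ref{item_r-not1}. Hence no arrow of $Q_A$ connects two elements of $\cal F$.

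\textbf{Step 2 (the quiver $Q_G$).} For $i,j\in\cal F$ and $\ell,\ell'\in\{0,\dots,s-1\}$, the number of arrows $i^{(\ell)}\to j^{(\ell')}$ in $Q_G$ is $\dim_\C\Hom_G(T_i^{(\ell)},V\otimes T_j^{(\ell')})$. Since restriction is a faithful functor, I would use the inequality
\[
\dim_\C\Hom_G(T_i^{(\ell)},V\otimes T_j^{(\ell')}) \le \dim_\C\Hom_A(\Res^G_A T_i^{(\ell)},\Res^G_A(V\otimes T_j^{(\ell')})).
\]
For $i,j\in\cal F$ I already noted that $\Res^G_A T_i^{(\ell)}\cong S_i$ and $\Res^G_A T_j^{(\ell')}\cong S_j$, while $\Res^G_A V\cong\bigoplus_{k=0}^{s-1}S_{r^k}$. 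Hence the right hand side equals $\#\{k\in\{0,\dots,s-1\} : r^k+j\equiv i\pmod m\}$, which counts exactly the arrows $S_i\to S_j$ in $Q_A$. By Step 1 this set is empty, so the hom space vanishes.

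\textbf{Main obstacle.} There is essentially no obstacle: the whole statement is a consequence of \ref{item_(m,r)=1} and \ref{item_r-not1}. The only point that requires a tiny bit of care is the reduction from $Q_G$ to $Q_A$, namely recognising that for fixed points the restriction of the one-dimensional summand $T_i^{(\ell)}$ is just $S_i$, so that the $G$-Hom is bounded above by the corresponding $A$-Hom computed in Step 1.
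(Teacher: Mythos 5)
Your proof is correct. For $Q_A$ your argument is essentially the paper's: both reduce to the observation that $i$ and $i-r^k$ being simultaneously fixed forces $(r-1)r^k\equiv 0\pmod m$, hence $r\equiv 1\pmod m$ by \ref{item_(m,r)=1}, contradicting \ref{item_r-not1}. For $Q_G$, however, you take a genuinely different route. The paper notes that $V\otimes T_j^{(\ell')}\cong T_{1+j}$ is irreducible (since $V$ is irreducible by \ref{item_r-not1} and Remark~\ref{rem_a'}), so a nonzero $\Hom_G(T_i^{(\ell)},V\otimes T_j^{(\ell')})$ would give $T_i^{(\ell)}\cong T_{1+j}$ by Schur's Lemma, which is impossible since these have dimensions $1$ and $s$. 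You instead bound the $G$-Hom space by the $A$-Hom space via the (injective) restriction map and reduce the $Q_G$ case to the $Q_A$ case already settled in Step~1. Both arguments are valid; yours is more uniform in that a single modular computation drives both halves, while the paper's is a one-line representation-theoretic observation that makes explicit where irreducibility of $V$ (i.e.\ condition \ref{item_r-not1}) enters at the level of $G$. The only point worth making explicit in your write-up is the identification $\Res^G_A(V\otimes T_j^{(\ell')})\cong \bigoplus_{k=0}^{s-1}S_{r^k+j}$, which you do state, and the injectivity of restriction on Hom spaces, which holds simply because a $G$-map is in particular an $A$-map.
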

\begin{proof}
Suppose that we have an arrow $i^{(\ell)}\to j^{(\ell')}$ in $Q_G$, where $i,j\in\cal F$ and $0\leq\ell,\ell'\leq s-1$. Then the space $\Hom_G(T_i^{(\ell)}, V\otimes T_j^{(\ell')}) \cong \Hom_G(T_i^{(\ell)}, T_{1+j})$ is different from zero. By \ref{item_r-not1}, $V$ is irreducible and thus so is $V\otimes T_j^{(\ell')} \cong T_{1+j}$. Hence, by Schur's Lemma, we must have $T_i^{(\ell)} \cong T_{1+j}$, which is a contradiction because these representations have different dimensions as vector spaces.

Now take two fixed points $i,j\in(Q_A)_0$. The arrows $i\to j$ in $Q_A$ are given by a basis of $\Hom_A(S_i,V\otimes S_j) \cong \Hom_A(S_i,\bigoplus_{k=0}^{s-1} S_{r^k+j})$, so if there exists such an arrow then we must have $i \equiv r^a+j \pmod m$ for some $a$. This implies that $r^a$ is a fixed point, because so is $i-j$. Hence we have $r \equiv 1$, which is again a contradiction by \ref{item_r-not1}.
\end{proof}

So for choosing a basis of the arrows we only have to consider the following three possibilities.
\begin{enumerate}[label=(\arabic*)]
\item $i,j\in \cal D \smallsetminus \cal F$. Then we choose $\{x^i_{p,a} \,|\, r^aj \equiv i-r^p \pmod m\}$ as a basis of $\Hom_G(T_i, V\otimes T_j)$. These elements form indeed a basis because they are the image under an isomorphism of a basis of $\Hom_A(S_i, V\otimes (\bigoplus_{k=0}^{s-1} S_{r^kj}))$.
\item $i\in \cal D \smallsetminus \cal F$ and $j\in \cal F$. For any $\ell = 0,\dots,s-1$, we call $\pi_j^{(\ell)}\colon T_j \to T_j^{(\ell)}$ the projection morphism and, given $a,p$ which satisfy $r^aj \equiv i-r^p \pmod m$, we define
$$x_{p,a}^{i(\ell)} := (\id_V \otimes \pi_j^{(\ell)}) \circ x_{p,a}^i \colon T_i \to V\otimes T_j^{(\ell)}.$$
Note that, since $j$ is a fixed point, $r^aj\equiv j$ and so every choice of $a$ satisfies the condition above. We choose $\{x_{p,0}^{i(\ell)} \,|\, j \equiv i-r^p \pmod m\}$ to be our basis for $\Hom_G(T_i,V\otimes T_j^{(\ell)})$. This set is indeed a basis, because it has cardinality at most $1$ and $\Hom_G(T_i,V\otimes T_j^{(\ell)})$ has dimension at most $1$.
\item $i\in \cal F$ and $j\in \cal D \smallsetminus \cal F$. For any $\ell = 0,\dots,s-1$, we call $\iota_i^{(\ell)}\colon T_i^{(\ell)} \to T_i$ the inclusion morphism and, given $a,p$ which satisfy $r^aj \equiv i-r^p \pmod m$, we define
$$x_{p,a}^{(\ell)i} := x_{p,a}^i \circ \iota_i^{(\ell)} \colon T_i^{(\ell)} \to V\otimes T_j.$$
Note that, since $i$ is a fixed point, we have that $r^{a+b}j \equiv i-r^{p+b}$ for any $b$, so we can assume that the exponent of $r$ is zero in the condition above. We choose $\{x_{p,0}^{(\ell)i} \,|\, j \equiv i-r^p \pmod m\}$ to be our basis for $\Hom_G(T_i^{(\ell)},V\otimes T_j)$. This set is indeed a basis, because it has cardinality at most $1$ and $\Hom_G(T_i^{(\ell)},V\otimes T_j)$ has dimension at most $1$.
\end{enumerate}

The following lemma consists in some calculations which will be used in the next subsections.

\begin{lem}\label{lem_calculations-arrows}
\begin{enumerate}[label=(\alph*)]
\item We have $\pi_i^{(\ell)}(v_i) = \frac{1}{s} \epsilon_m^{-ti} \lambda_{i,\ell} w_i^{(\ell)}$ and $\iota_i^{(\ell)}(w_i^{(\ell)}) = \sum_{k=0}^{s-1} \lambda_{i,\ell}^{s-1-k} \beta^k v_i$.
\item $x_{p,0}^{i(\ell)}(v_i) = \frac{\epsilon_m^{-tj}}{s}\lambda_{j,\ell} \beta^p v_1 \otimes w_j^{(\ell)}$.
\item $x_{p,0}^{(\ell)i}(w_i^{(\ell)}) = \sum_{k=0}^{s-1} \lambda_{i,\ell}^{s-1-k} \beta^{k+p}v_1 \otimes \beta^{k}v_j$.
\item If $j$ is a fixed point, the composition
$$ T_i \xrightarrow{x_{p,0}^{i(\ell)}} V \otimes T_j^{(\ell)} \xrightarrow{\id_V \otimes x_{q,0}^{(\ell)j}} V \otimes V \otimes T_h $$
sends $v_i$ to
$$ \frac{1}{s} \sum_{k=0}^{s-1} \lambda_{j,\ell}^{s-k} \beta^p v_1 \otimes \beta^{k+q} v_1 \otimes \beta^{k} v_h. $$
\end{enumerate}
\end{lem}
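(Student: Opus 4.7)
The plan is to establish the four parts in order, since (b), (c), (d) all rely on (a), and (d) combines (b) and (c).

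The main content is the first formula in (a); the second is immediate from the definition $w_i^{(\ell)} := \sum_{k=0}^{s-1}\lambda_{i,\ell}^{s-1-k}\beta^k v_i$ and the fact that $\iota_i^{(\ell)}$ is the inclusion of the eigenspace $T_i^{(\ell)}$ into $T_i$. For the projection formula, I would expand $v_i$ in the eigenbasis $\{w_i^{(\ell')}\}_{\ell'}$ as $v_i = \sum_{\ell'} c_{\ell'} w_i^{(\ell')}$. Comparing coefficients in the basis $\{\beta^k v_i\}_k$ reduces this to solving the system $\sum_{\ell'} c_{\ell'} \lambda_{i,\ell'}^{s-1-k} = \delta_{k,0}$ for $k=0,\ldots,s-1$. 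Plugging in the ansatz $c_{\ell'} = \frac{1}{s}\epsilon_m^{-ti}\lambda_{i,\ell'}$ and using $\lambda_{i,\ell'}=\eta_i\epsilon_s^{\ell'}$ together with $\eta_i^s = \epsilon_m^{ti}$, the required identity becomes the orthogonality relation $\frac{1}{s}\sum_{\ell'=0}^{s-1}\epsilon_s^{\ell'(s-k)} = \delta_{k,0}$ for $0 \le k \le s-1$, which is standard.

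Part (b) is then a one-line computation: by definition $x_{p,0}^{i(\ell)} = (\id_V\otimes\pi_j^{(\ell)}) \circ x_{p,0}^i$, and $x_{p,0}^i(v_i) = \beta^p v_1 \otimes v_j$, so applying (a) to $\pi_j^{(\ell)}(v_j)$ gives the formula. For (c) I would use $x_{p,0}^{(\ell)i} = x_{p,0}^i \circ \iota_i^{(\ell)}$ together with (a) to get $x_{p,0}^i\bigl(\sum_k \lambda_{i,\ell}^{s-1-k}\beta^k v_i\bigr)$. Since $x_{p,0}^i$ is a $G$-morphism, $x_{p,0}^i(\beta^k v_i) = \beta^k\cdot x_{p,0}^i(v_i) = \beta^k\cdot(\beta^p v_1\otimes v_j)$, and because $G$ acts diagonally on tensor products this equals $\beta^{k+p}v_1 \otimes \beta^k v_j$.

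For (d), I would simply compose (b) and (c): apply (b) to obtain $x_{p,0}^{i(\ell)}(v_i) = \frac{\epsilon_m^{-tj}}{s}\lambda_{j,\ell}\,\beta^p v_1 \otimes w_j^{(\ell)}$, then apply $\id_V\otimes x_{q,0}^{(\ell)j}$ using (c) on $w_j^{(\ell)}$. Combining the prefactors, one gets $\lambda_{j,\ell}\cdot\lambda_{j,\ell}^{s-1-k} = \lambda_{j,\ell}^{s-k}$, while the residual factor $\epsilon_m^{-tj}$ is absorbed via $\lambda_{j,\ell}^s = \eta_j^s = \epsilon_m^{tj}$, yielding the stated formula. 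No step looks genuinely hard; the only place where care is needed is the orthogonality/change-of-basis argument at the start of (a), and the bookkeeping of exponents of $\lambda_{j,\ell}$ in (d).
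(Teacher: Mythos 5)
Your argument is correct and follows the paper's route: part (a) is exactly the paper's verification of $v_i=\frac{1}{s}\sum_{\ell}\epsilon_m^{-ti}\lambda_{i,\ell}w_i^{(\ell)}$ via the root-of-unity orthogonality $\sum_{\ell}\epsilon_s^{-k\ell}=s\,\delta_{k,0}$ (you phrase it as solving for the coefficients, the paper verifies the identity directly), and the paper dispatches (b)--(d) with the single remark that they follow immediately from (a), which is precisely what you carry out. The only wrinkle is in (d): composing (b) and (c) literally gives the prefactor $\frac{\epsilon_m^{-tj}}{s}\lambda_{j,\ell}^{s-k}$, and ``absorbing'' $\epsilon_m^{-tj}=\lambda_{j,\ell}^{-s}$ would produce $\lambda_{j,\ell}^{-k}$ rather than the stated $\lambda_{j,\ell}^{s-k}$; these differ only by the overall constant $\epsilon_m^{tj}$, which is exactly the kind of power of $\epsilon_m^{ti}$ that the remark following the lemma declares immaterial, so nothing essential is lost.
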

\begin{proof}
(a) The second claim is immediate from the definition of $w_i^{(\ell)}$. For the first claim, it is enough to show that $v_i=\frac{1}{s} \sum_{\ell=0}^{s-1} \epsilon_m^{-ti} \lambda_{i,\ell} w_i^{(\ell)}$. Indeed, we have
\[
\frac{1}{s} \sum_{\ell=0}^{s-1} \epsilon_m^{-ti} \lambda_{i,\ell} w_i^{(\ell)} = \frac{1}{s} \sum_{\ell=0}^{s-1} \epsilon_m^{-ti} \lambda_{i,\ell} \sum_{k=0}^{s-1} \lambda_{i,\ell}^{s-1-k} \beta^k v_i = \frac{1}{s} \sum_{k=0}^{s-1} \epsilon_m^{-ti} \left(\sum_{\ell=0}^{s-1} \lambda_{i,\ell}^{s-k}\right) \beta^k v_i.
\]
Recalling that $\lambda_{i,\ell}=\eta_i \epsilon_s^\ell$, where $\eta_i^s=\epsilon_m^{ti}$, we get
\[
\sum_{\ell=0}^{s-1} \lambda_{i,\ell}^{s-k} = \eta_i^{s-k} \sum_{\ell=0}^{s-1} \epsilon_s^{l(s-k)} = \eta_i^{s-k} \sum_{\ell=0}^{s-1} \epsilon_s^{-kl} = \left\{ \begin{array}{cc}
s\eta_i^s=s\epsilon_m^{ti} & \text{ if } k=0, \\
\frac{1-\epsilon_s^{-ks}}{1-\epsilon_s^{-k}}=0 & \text{ if } k\neq0,
\end{array} \right.
\]
and the result follows.

(b), (c), (d) follow immediately from (a).
\end{proof}

\begin{rem}
We may note that in the previous lemma we made an abuse of notation. Indeed, the basis of $T_i$ is made of the elements $\beta^k v_1$ for $k\in\{0,\dots,s-1\}$, so when we write $\beta^k v_i$ for some $k\not\in\{0,\dots,s-1\}$ this should be intended as $\beta^{\bar{k}} v_i$ (where $\bar{k}$ denotes the smallest non-negative integer in the equivalence class of $k$ modulo $s$) multiplied by a constant, more precisely by a power of $\varepsilon_m^{ti}$. The exact value of this constant will not be important for us in what follows, the only thing which should be pointed out is that it is different from zero. So, keeping this in mind, we will tacitly carry on this abuse of notation and continue to write $\beta^k v_i$ even if $k\not\in\{0,\dots,s-1\}$.
\end{rem}

Our next aim will be to describe the superpotential of $Q_G$. Actually, this superpotential will be twisted, where the twist is induced by the tensor product with the representation $\det_V$. Hence we will start by describing explicitly this automorphism.

\subsection{The twist in $Q_G$}\label{sec_twist}

Set $c:=\sum_{k=0}^{s-1}r^k$ and note that it is a fixed point. We may observe that the elements $\alpha$ and $\beta$ act on $\det_V$ respectively as the multiplication by $\epsilon_m^c$ and by $(-1)^{s-1}\epsilon_m^t$. Since $t$ is a fixed point, we have that $r^kt \equiv t \pmod m$ for all $k$, thus $tc \equiv ts \pmod m$. Hence, putting $d_s:= \left\{\begin{array}{cc} 0 & \text{ if $s>2$,} \\ 1 & \text{ if $s=2$}\end{array}\right.$, we get that $\lambda_{c,d_s} = \epsilon_m^{\frac{t}{s}c}\epsilon_s^{d_s} = \epsilon_m^t(-1)^{s-1}$ and so $\det_V = T_c^{(d_s)}$. It is easily checked that the maps
\begin{equation}\label{eq_twist-non-fp}
\begin{split}
T_i \otimes \det_V & \to T_{i+c} \\
v_i \otimes w_c^{(d_s)} & \mapsto v_{i+c}
\end{split}
\end{equation}
and, if $i$ is a fixed point,
\begin{equation}\label{eq_twist-fp}
\begin{split}
T_i^{(\ell)} \otimes \det_V & \to T_{i+c}^{(\ell+d_s)} \\
w_i^{(\ell)} \otimes w_c^{(d_s)} & \mapsto w_{i+c}^{(\ell+d_s)},
\end{split}
\end{equation}
are isomorphisms (for the second map, note that $\lambda_{i,\ell}\lambda_{c,d_s}=\lambda_{i+c,\ell+d_s}$).

From now on, we will need to make the following assumption.
\begin{assum}\label{ass_repclosed}
The set of representatives $\cal D$ is closed under the sum by $c$.
\end{assum}

Hence we have that the twist acts on vertices by $\tau(i)=i+c$ if $i\in\cal D\smallsetminus\cal F$, and $\tau(i^{(\ell)})=(i+c)^{(\ell+d_s)}$ if $i\in\cal F$, $0\leq \ell\leq s-1$.

The following lemma can be easily proved using the isomorphisms \eqref{eq_twist-non-fp} and \eqref{eq_twist-fp}.

\begin{lem}\label{lem_action-tau}
The twist $\tau$ sends each arrow in $Q_G$ to a non-zero scalar multiple of another arrow. More precisely, we have:
\begin{enumerate}[label=(\arabic*)]
\item if $i,j\in \cal D\smallsetminus\cal F$, then $\tau(x_{p,a}^i)=\lambda_{c,d_s}^{-a}x_{p,a}^{i+c}$;
\item if $i\in \cal D\smallsetminus\cal F$ and $j\in\cal F$, then $\tau(x_{p,0}^{i(\ell)}) = \lambda_{c,d_s}^{s-1} x_{p,0}^{i(\ell+d_s)}$;
\item if $i\in\cal F$ and $j\in \cal D\smallsetminus\cal F$, then $\tau(x_{p,0}^{(\ell)i}) = \lambda_{c,d_s}^{1-s} x_{p,0}^{(\ell+d_s)i}$.
\end{enumerate}
\end{lem}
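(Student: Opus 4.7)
The plan is to evaluate $\tau(x)$ on a basis vector, push the $\det_V$-factor past the $V$-factor using the symmetry of the tensor product of representations, rewrite the result through the explicit isomorphisms \eqref{eq_twist-non-fp} and \eqref{eq_twist-fp}, and compare with the value of the candidate arrow given by Lemma~\ref{lem_calculations-arrows}. The only nontrivial observation is that the identification $\det_V\otimes T_j\cong T_{j+c}$ is not compatible with the naive matching of basis vectors: since $\beta(w_c^{(d_s)}\otimes\beta^a v_j)=\lambda_{c,d_s}^a w_c^{(d_s)}\otimes\beta^a v_j$, one checks that $w_c^{(d_s)}\otimes \beta^a v_j$ corresponds to $\lambda_{c,d_s}^{-a}\beta^a v_{j+c}$ in $T_{j+c}$. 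This is the source of every power of $\lambda_{c,d_s}$ that appears in the statement.

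For (1), I would apply $\tau=\id_{\det_V}\otimes\_$ to $x_{p,a}^i\colon T_i\to V\otimes T_j$, evaluate on $w_c^{(d_s)}\otimes v_i$ (which corresponds to $v_{i+c}$), braid the $\det_V$-factor to the right of $V$, and apply the identification $\det_V\otimes T_j\cong T_{j+c}$ to the factor $w_c^{(d_s)}\otimes\beta^a v_j$. By the observation above, this produces the factor $\lambda_{c,d_s}^{-a}$ times $\beta^p v_1\otimes\beta^a v_{j+c}=x_{p,a}^{i+c}(v_{i+c})$, which is exactly (1).

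For (2), using Lemma~\ref{lem_calculations-arrows}(b) and the same braiding trick, one gets
$$\tau(x_{p,0}^{i(\ell)})(v_{i+c})=\tfrac{\epsilon_m^{-tj}}{s}\lambda_{j,\ell}\,\beta^p v_1\otimes w_{j+c}^{(\ell+d_s)}.$$
Comparing with $x_{p,0}^{(i+c)(\ell+d_s)}(v_{i+c})$, again computed by Lemma~\ref{lem_calculations-arrows}(b), the ratio is $\epsilon_m^{tc}\lambda_{j,\ell}/\lambda_{j+c,\ell+d_s}$. Using $\lambda_{j+c,\ell+d_s}=\lambda_{j,\ell}\lambda_{c,d_s}$ and the fact that $\lambda_{c,d_s}^s=\epsilon_m^{tc}$ (by the very definition of $\lambda_{c,d_s}$) this ratio collapses to $\lambda_{c,d_s}^{s-1}$, giving (2). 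Case (3) is analogous: start from the expression in Lemma~\ref{lem_calculations-arrows}(c), braid term by term, apply the shift $w_c^{(d_s)}\otimes\beta^k v_j\leadsto\lambda_{c,d_s}^{-k}\beta^k v_{j+c}$, and match against $x_{p,0}^{(\ell+d_s)(i+c)}$, where the relation $\lambda_{i+c,\ell+d_s}^{s-1-k}=(\lambda_{i,\ell}\lambda_{c,d_s})^{s-1-k}$ shows that every summand acquires the common factor $\lambda_{c,d_s}^{1-s}$.

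The only real obstacle is bookkeeping: one has to be careful to distinguish the two natural but inequivalent matchings between the vector $w_c^{(d_s)}\otimes\beta^a v_j$ and a basis vector of $T_{j+c}$, since only the one dictated by the $G$-equivariance of \eqref{eq_twist-non-fp} yields the correct scalars. Once that point is recorded, all three statements reduce to the identity $\lambda_{j+c,\ell+d_s}=\lambda_{j,\ell}\lambda_{c,d_s}$ plus the definition $\lambda_{c,d_s}^s=\epsilon_m^{tc}$.
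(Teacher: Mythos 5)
Your proposal is correct and follows exactly the route the paper indicates (the paper gives no written proof beyond ``can be easily proved using the isomorphisms \eqref{eq_twist-non-fp} and \eqref{eq_twist-fp}''): identify $T_i\otimes\det_V$ with $T_{i+c}$, track how the basis vectors $\beta^a v_j\otimes w_c^{(d_s)}$ transport under the $G$-equivariant identification, and compare scalars via $\lambda_{j+c,\ell+d_s}=\lambda_{j,\ell}\lambda_{c,d_s}$ and $\lambda_{c,d_s}^s=\epsilon_m^{tc}$. One typo: the displayed justification should read $\beta^a\bigl(w_c^{(d_s)}\otimes v_j\bigr)=\lambda_{c,d_s}^{a}\,w_c^{(d_s)}\otimes\beta^a v_j$ rather than $\beta\bigl(w_c^{(d_s)}\otimes\beta^a v_j\bigr)=\lambda_{c,d_s}^{a}\,w_c^{(d_s)}\otimes\beta^a v_j$, but the consequence you draw from it (that $w_c^{(d_s)}\otimes\beta^a v_j$ corresponds to $\lambda_{c,d_s}^{-a}\beta^a v_{j+c}$) is the correct one and all three scalar computations check out.
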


In view of the above lemma, we have that $\tau$ induces in a natural way an automorphism $\tau'$ of $Q_G$, as we observed in the discussion above Lemma~\ref{lem_cyclic-permutation}.

\subsection{The superpotential of $Q_G$}

Now we are ready to begin the study of the superpotential $\omega_G$. Our aim will be to prove Theorem~\ref{thm_Psi(omegaG)cont_inPhi(omegaA)}, which says that every path in $\supp(\omega_G)$ is induced from a path in $\supp(\omega_A)$ (we will make this statement more precise by introducing a new quiver $\tilde{Q}_G$, see Definition~\ref{def_Qtilde}).
We start by proving two lemmas which describe explicitly the map $T_i\to V^{\otimes u}\otimes T_j$ associated to a path of length $u$ in $Q_G$. This will be done only when such a path satisfies some technical assumptions, since, as we will see in the proof of Theorem~\ref{thm_Psi(omegaG)cont_inPhi(omegaA)}, the general case can be traced back to that of paths of this kind. Finally, in Theorem~~\ref{thm_Psi(omegaG)=Phi(omegaA)-1fp} we will see that a converse of Theorem~\ref{thm_Psi(omegaG)cont_inPhi(omegaA)} holds for paths which contain at most one fixed point.

\begin{lem}\label{lem_comp_non-f.p.}
For $i_1,i_2,\dots,i_{u+1}\in\Z/m\Z$, let $a_1,\dots,a_u,p_1,\dots,p_u$ be integers which satisfy $r^{a_h}i_{h+1} \equiv i_h-r^{p_h} \pmod m$ for each $h=1,\dots,u$. Then:
\begin{enumerate}[label=(\arabic*)]
\item the composition
\[
T_{i_1} \xrightarrow{x^{i_1}_{p_1,a_1}} V \otimes T_{i_2} \xrightarrow{1_V \otimes x^{i_2}_{p_2,a_2}} \dots \xrightarrow{1_{V^{\otimes u-1}} \otimes x^{i_u}_{p_u,a_u}} V^{\otimes u} \otimes T_{i_{u+1}}
\]
sends $v_{i_1}$ to the element
$$\beta^{p_1}v_1 \otimes \beta^{a_1+p_2}v_1 \otimes \beta^{a_1+a_2+p_3}v_1 \otimes \dots \otimes \beta^{a_1+\dots+a_{u-1}+p_u}v_1 \otimes \beta^{a_1+\dots+a_u}v_{i_{u+1}};$$
\item we have
\begin{equation}\label{eq_sum_i_k}
r^{a_1+\dots+a_u}i_{u+1} \equiv i_1-r^{p_1}-r^{a_1+p_2}-\dots-r^{a_1+\dots+a_{u-1}+p_u} \pmod m.
\end{equation}
\end{enumerate}
\end{lem}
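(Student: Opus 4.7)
The plan is to prove both parts by induction on $u$, with the key underlying fact being the $G$-equivariance of the morphisms $x^i_{p,a}$, combined with the diagonal action of $G$ on tensor products.

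For part (1), the base case $u=1$ is immediate from the very definition $x^{i_1}_{p_1,a_1}(v_{i_1}) = \beta^{p_1} v_1 \otimes \beta^{a_1} v_{i_2}$. For the inductive step, suppose the formula holds after applying the first $u-1$ arrows, so the intermediate image has $\beta^{a_1+\cdots+a_{u-1}} v_{i_u}$ in its final tensor factor. We then need to compute $(1_{V^{\otimes u-1}} \otimes x^{i_u}_{p_u,a_u})$ on this element. Since $x^{i_u}_{p_u,a_u}$ is a $\C G$-module homomorphism, in particular $\beta$-equivariant, we get $x^{i_u}_{p_u,a_u}(\beta^{a_1+\cdots+a_{u-1}} v_{i_u}) = \beta^{a_1+\cdots+a_{u-1}} \cdot (\beta^{p_u} v_1 \otimes \beta^{a_u} v_{i_{u+1}})$, and the diagonal $G$-action on $V \otimes T_{i_{u+1}}$ distributes this as $\beta^{a_1+\cdots+a_{u-1}+p_u} v_1 \otimes \beta^{a_1+\cdots+a_u} v_{i_{u+1}}$, which when inserted into the preceding factors yields exactly the asserted expression.

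For part (2), this is a purely arithmetic statement in $\Z/m\Z$ that does not interact with part (1); it can be proven by induction on $u$ as well. The base case is the hypothesis for $h=1$. Assuming the relation for $u-1$, namely
\[
r^{a_1+\cdots+a_{u-1}} i_u \equiv i_1 - r^{p_1} - r^{a_1+p_2} - \cdots - r^{a_1+\cdots+a_{u-2}+p_{u-1}} \pmod m,
\]
we multiply the hypothesis $r^{a_u} i_{u+1} \equiv i_u - r^{p_u}$ by $r^{a_1+\cdots+a_{u-1}}$ and substitute, producing the asserted relation for $u$.

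The argument is essentially bookkeeping, so there is no real obstacle; the only point requiring care is the indexing of the exponents of $\beta$ when pushing the $G$-action through the tensor factors and when tacitly reducing exponents modulo $s$ (in the sense of the remark preceding the lemma, scalar factors that arise are nonzero and do not affect the form of the expression up to the abuse of notation already introduced).
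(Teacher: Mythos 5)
Your proposal is correct and follows essentially the same route as the paper: both parts are proved by induction on $u$, with part (1) using the $G$-equivariance of $x^{i_u}_{p_u,a_u}$ together with the diagonal action of $\beta^{a_1+\cdots+a_{u-1}}$ on the tensor factors, and part (2) obtained by multiplying the relation $r^{a_u}i_{u+1}\equiv i_u-r^{p_u}$ by $r^{a_1+\cdots+a_{u-1}}$ and substituting into the inductive hypothesis. No gaps.
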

\begin{proof}
(1) We proceed by induction on $u$. The case $u=1$ is clear by the definition of the $x_{p,a}^i$'s. Now suppose $u>1$. By the induction hypothesis we have
\begin{align*}
&(1_{V^{\otimes u-1}} \otimes x^{i_u}_{p_u,a_u}) \circ \dots \circ (x^{i_1}_{p_1,a_1}) (v_{i_1}) = \\
&= (1_{V^{\otimes u-1}} \otimes x^{i_u}_{p_u,a_u}) (\beta^{p_1}v_1 \otimes \beta^{a_1+p_2}v_1 \otimes \dots \otimes \beta^{a_1+\dots+a_{u-2}+p_{u-1}}v_1 \otimes \beta^{a_1+\dots+a_{u-1}}v_{i_{u}}) \\
&= \beta^{p_1}v_1 \otimes \beta^{a_1+p_2}v_1 \otimes \dots \otimes \beta^{a_1+\dots+a_{u-2}+p_{u-1}}v_1 \otimes \beta^{a_1+\dots+a_{u-1}} x^{i_u}_{p_u,a_u}(v_{i_{u}}) \\
&= \beta^{p_1}v_1 \otimes \beta^{a_1+p_2}v_1 \otimes \dots \otimes \beta^{a_1+\dots+a_{u-2}+p_{u-1}}v_1 \otimes \beta^{a_1+\dots+a_{u-1}} (\beta^{p_u}_1v_1 \otimes \beta^{a_u}v_{i_{u+1}}) \\
&= \beta^{p_1}v_1 \otimes \beta^{a_1+p_2}v_1 \otimes \beta^{a_1+a_2+p_3}v_1 \otimes \dots \otimes \beta^{a_1+\dots+a_{u-1}+p_u}v_1 \otimes \beta^{a_1+\dots+a_u}v_{i_{u+1}}.
\end{align*}

(2) We proceed again by induction on $u$. The case $u=1$ is clear since $r^{a_1}i_2 \equiv i_1-r^{p_1}$. Now suppose that $r^{a_1+\dots+a_{u-1}}i_u \equiv i_1-r^{p_1}-r^{a_1+p_2}-\dots-r^{a_1+\dots+a_{u-2}+p_{u-1}}$. By adding on both sides $-r^{a_1+\dots+a_{u-1}+p_u}$ we see that is enough to prove that $r^{a_1+\dots+a_{u-1}} i_u - r^{a_1+\dots+a_{u-1}+p_u} \equiv r^{a_1+\dots+a_u} i_{u+1}$, but this follows immediately from the fact that $r^{a_u}i_{u+1} \equiv i_u-r^{p_u}$.
\end{proof}

\begin{lem}\label{lem_comp_f.p.}
Let $i_1,i_2,\dots,i_{u+1}\in\cal D$ and let $a_1,\dots,a_u,p_1,\dots,p_u$ be integers which satisfy $r^{a_j}i_{j+1} \equiv i_j-r^{p_j} \pmod m$ for each $j=1,\dots,u$. Suppose that the only fixed points among the $i_j$'s are $i_{f_1},\dots,i_{f_h}$ for some integers $f_1,\dots,f_h\in \{2,\dots,u\}$, and that $a_{f_j-1}=a_{f_j}=0$ for any $j$. We also assume that $f_{j+1}-f_j\geq 2$ for each $j=2,\dots,h-1$.

Fix integers $\ell_1,\dots,\ell_h\in\{0,\dots,s-1\}$. We introduce the following simplified notation: for each $j=1,\dots,u$, we set $i_j' := i_{f_j-1}$, $p_j' := p_{f_j-1}$, $i_j'' := i_{f_j}$ and $p_j'' := p_{f_j}$.

Consider the following path of length $u$:
\begin{equation}\label{eq_path_fp}
i_1 \xrightarrow{x_{p_1,a_1}^{i_1}} \dots \to i_j' \xrightarrow{x_{p_j',0}^{i_j'(\ell_j)}} i_j''^{(\ell_j)} \xrightarrow{x_{p_j'',0}^{(\ell_j)i_j''}} i_{f_j+1} \to \dots \xrightarrow{x_{p_u,a_u}^{i_u}} i_{u+1}.
\end{equation}
Then the corresponding composition $T_{i_1} \to \dots \to V^{\otimes u} \otimes T_{i_{u+1}}$ sends $v_{i_1}$ to
\begin{equation}\label{eq_composition_fp}
\frac{1}{s^h} \sum_{k_1,\dots,k_h=0}^{s-1} \left( \prod_{j=1}^h \epsilon_s^{-ti_{f_j}}\lambda_{i_{f_j},\ell_j}^{s-k_j} \right) \beta^{q_1}v_1 \otimes \dots \otimes \beta^{q_u}v_1 \otimes \beta^{q_{u+1}}v_{i_{u+1}},
\end{equation}
where $f_0:=0$ and
\begin{align*}
q_1 & := p_1, \\
\dots & \\
q_{f_1-1} & := a_1+\dots+a_{f_1-2}+p_{f_1-1}, \\
q_{f_1} & := k_1+a_1+\dots+a_{f_1-1}+p_{f_1}, \\
\dots & \\
q_{f_2-1} & := k_1+a_1+\dots+a_{f_2-2}+p_{f_2-1}, \\
q_{f_2} & := k_1+k_2+a_1+\dots+a_{f_2-1}+p_{f_2}, \\
\dots & \\
q_{f_h-1} & := k_1+\dots+k_{h-1}+a_1+\dots+a_{f_h-2}+p_{f_h-1}, \\
q_{f_h} & := k_1+\dots+k_h+a_1+\dots+a_{f_h-1}+p_{f_h}, \\
\dots \\
q_u & := k_1+\dots+k_h+a_1+\dots+a_{u-1}+p_u \\
q_{u+1} & := k_1+\dots+k_h+a_1+\dots+a_u.
\end{align*}
\end{lem}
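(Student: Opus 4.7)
The plan is to decompose the path \eqref{eq_path_fp} at each pair of arrows through a fixed point $i_{f_j}^{(\ell_j)}$ and compute the composition piece by piece, gluing the pieces via the tensor structure. The two key inputs are Lemma~\ref{lem_comp_non-f.p.}, which treats runs of consecutive arrows between non-fixed vertices, and Lemma~\ref{lem_calculations-arrows}(d), which evaluates the two-step passage $T_{i'} \to V\otimes T_{i''}^{(\ell)} \to V\otimes V\otimes T_{h}$ through a single fixed point, producing a factor $\tfrac{1}{s}\sum_{k}\epsilon_m^{-ti''}\lambda_{i'',\ell}^{s-k}$ together with a $k$-shift on the two new tensor slots.

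I would argue by induction on $h$, the number of fixed points in the path. The base case $h=0$ is exactly Lemma~\ref{lem_comp_non-f.p.}, in which case the indexed sums collapse. For the inductive step I would split the path at the last fixed point $i_{f_h}$. The prefix, of length $f_h-2$, ends at $i_{f_h-1}=i_h'$ and contains $h-1$ fixed points, so the inductive hypothesis expresses its contribution as \eqref{eq_composition_fp} truncated to the first $h-1$ factors (i.e. indices $k_1,\dots,k_{h-1}$ and exponents $q_1,\dots,q_{f_h-1}$, with the trailing tensor slot being $\beta^{k_1+\cdots+k_{h-1}+a_1+\cdots+a_{f_h-2}}v_{i_{f_h-1}}$). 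Tensoring with $\id_{V^{\otimes f_h-2}}$ and applying Lemma~\ref{lem_calculations-arrows}(d) to the fixed-point pair then multiplies the output by $\tfrac{1}{s}\sum_{k_h}\epsilon_m^{-ti_{f_h}}\lambda_{i_{f_h},\ell_h}^{s-k_h}$, inserts the exponent $q_{f_h}=k_1+\cdots+k_h+a_1+\cdots+a_{f_h-1}+p_{f_h}$ into the $f_h$-th tensor slot, and shifts the tail $T$-factor by $k_h$. Finally I would apply Lemma~\ref{lem_comp_non-f.p.} to the remaining tail $i_{f_h+1}\to\cdots\to i_{u+1}$, which is fixed-point-free by hypothesis; this propagates the cumulative shift $k_1+\cdots+k_h$ through the exponents $q_{f_h+1},\dots,q_{u+1}$ in exactly the form stated.

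The main obstacle is purely bookkeeping: verifying that each $k_j$ produces an additive shift in every $q_\ell$ with $\ell\geq f_j$ but in none of the earlier ones, and that the accumulated coefficients multiply to the claimed $\prod_{j=1}^h \epsilon_m^{-ti_{f_j}}\lambda_{i_{f_j},\ell_j}^{s-k_j}$. The assumptions $a_{f_j-1}=a_{f_j}=0$ and $f_{j+1}-f_j\geq 2$ are exactly what is needed so that the arrows $x^{i_j'(\ell_j)}_{p_j',0}$ and $x^{(\ell_j)i_j''}_{p_j'',0}$ are defined (they correspond to choices with the second subscript equal to $0$, which for the fixed-point arrows was our standing basis choice in Section~\ref{sec_sga-of-metacyclic-groups}) and so that the Lemma~\ref{lem_comp_non-f.p.} segments between consecutive fixed points are genuinely fixed-point-free. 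The tacit convention that $\beta^Nv_j$ for $N\notin\{0,\dots,s-1\}$ denotes $\beta^{\bar N}v_j$ up to a nonzero scalar must be used consistently throughout, but it does not affect the shape of the formula.
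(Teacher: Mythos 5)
Your proposal is correct and follows essentially the same route as the paper: induction on the number $h$ of fixed points, with base case Lemma~\ref{lem_comp_non-f.p.}, splitting off the prefix ending at $i_{f_h-1}$ to apply the inductive hypothesis, inserting the fixed-point passage via Lemma~\ref{lem_calculations-arrows}(d), and finishing the tail by the same exponent computation as in Lemma~\ref{lem_comp_non-f.p.}. The bookkeeping of the cumulative shifts $k_1+\dots+k_j$ and of the scalar factors is exactly what the paper carries out.
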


\begin{rem}
Note that the assertion of the lemma makes sense also for $h=0$. In this case the element \eqref{eq_composition_fp} is equal to
$$\beta^{p_1}v_1 \otimes \beta^{a_1+p_2}v_1 \otimes \dots \otimes \beta^{a_1+\dots+a_{u-1}+p_u}v_1 \otimes \beta^{a_1+\dots+a_u}v_{i_{u+1}},$$
and so the result follows from Lemma~\ref{lem_comp_non-f.p.}.
\end{rem}

\begin{proof}[Proof of Lemma~\ref{lem_comp_f.p.}]
We proceed by induction on the number $h$ of fixed points. The case $h=0$ is clear by the previous remark.

Now fix an $h\geq1$ and suppose that the statement is valid for every path of any length which contains strictly less than $h$ fixed points. So let $\bf p$ be the path \eqref{eq_path_fp} and consider the subpath
$$ \bf p'\colon i_1 \to \dots \to i_{f_h-1}. $$
Clearly $\bf p'$ contains $h-1$ fixed points, and neither its starting nor its ending points are among them. So $\bf p'$ satisfies the induction hypothesis and we have that the composition $T_{i_1} \to \dots \to V^{\otimes f_h-2} \otimes T_{f_h-1}$ sends $v_{i_1}$ to
\begin{equation*}
\frac{1}{s^{h-1}} \sum_{k_1,\dots,k_{h-1}=0}^{s-1} \left( \prod_{j=1}^{h-1} \epsilon_s^{-ti_{f_j}}\lambda_{i_{f_j},\ell_j}^{s-k_j} \right) \beta^{q_1}v_1 \otimes \dots \otimes \beta^{q_{f_h-2}}v_1 \otimes \beta^{q'_{f_h-1}}v_{i_{f_h-1}},
\end{equation*}
where $q'_{f_h-1}:=k_1+\dots+k_{h-1}+a_1+\dots+a_{f_h-2}$.
If we apply to this element the map
$$ V^{\otimes (f_h-2)} \otimes T_{f_h-1} \xrightarrow{\left(\id_{V^{\otimes (f_h-2)}}\right) \otimes x_{p_h',0}^{i_h'(\ell_h)}} V^{\otimes (f_h-1)} \otimes T_{f_h}^{(\ell_h)} \xrightarrow{\left(\id_{V^{\otimes (f_h-1)}}\right) \otimes x_{p_h'',0}^{(\ell_h)i_h''}} V^{\otimes f_h} \otimes T_{f_h+1} $$
we get
\begin{equation}\label{eq_sommagrossa}
\frac{1}{s^{h-1}} \sum_{k_1,\dots,k_{h-1}=0}^{s-1} \left( \prod_{j=1}^{h-1} \epsilon_s^{-ti_{f_j}}\lambda_{i_{f_j},\ell_j}^{s-k_j} \right) \beta^{q_1}v_1 \otimes \dots \otimes \beta^{q_{f_h-2}}v_1 \otimes \beta^{q'_{f_h-1}} \theta(v_{i_{f_h-1}}),
\end{equation}
where $\theta := \left( \id_V \otimes x_{p_h'',0}^{(\ell_h)i_h''} \right) \circ x_{p_h',0}^{i_h'(\ell_h)}$. By Lemma~\ref{lem_calculations-arrows} we have that
$$ \theta(v_{i_{f_h-1}}) = \frac{\epsilon_s^{-ti_{f_h}}}{s} \sum_{k_h=0}^{s-1} \lambda_{i_{f_h},\ell_h}^{s-k_h} \beta^{p_{f_h-1}} v_1 \otimes \beta^{k_h+p_{f_h}} v_1 \otimes \beta^{k_h} v_{i_{f_h+1}},$$
so the \eqref{eq_sommagrossa} becomes
\begin{equation*}
\frac{1}{s^h} \sum_{k_1,\dots,k_h=0}^{s-1} \left( \prod_{j=1}^{h-1} \epsilon_s^{-ti_{f_j}}\lambda_{i_{f_j},\ell_j}^{s-k_j} \right) \beta^{q_1}v_1 \otimes \dots \otimes \beta^{q_{f_h-1}}v_1 \otimes \beta^{q_{f_h}}v_1\otimes \beta^{q'_{f_h-1}+k_h}v_{i_{f_h+1}},
\end{equation*}
since $q_{f_h-1} = k_1+\dots+k_{h-1}+a_1+\dots+a_{f_h-2}+p_{f_h-1} = q'_{f_h-1}+p_{f_{h-1}}$, $q_{f_h} = k_1+\dots+k_h+a_1+\dots+a_{f_h-1}+p_{f_h} = q'_{f_h-1}+a_{f_h-1}+k_h+p_{f_h} = q'_{f_h-1}+k_h+p_{f_h}$. Hence, if we go on calculating the exponents in the same way as in Lemma~\ref{lem_comp_non-f.p.}, we obtain the result we wanted to prove.
\end{proof}

Let us illustrate in an example how one can compute the element \eqref{eq_composition_fp} of Lemma~\ref{lem_comp_f.p.}.

\begin{example}\label{ex_sec6}
Let $G$ be the metacyclic group associated to $m=21$, $r=4$, $s=3$, $t=0$ (see Example~\ref{ex_s=3} for further details about this example). Choose $\cal D = \{0,4,7,8,9,12,13,14,17\}\subseteq\Z/21\Z$ as a set of representatives for the $G/A$-action. For an $\ell\in\{0,1,2\}$, consider the path
$$\bf p\colon 12 \xrightarrow{x^{12}_{0,1}} 8 \xrightarrow{x^{8(\ell)}_{0,0}} 7^{(\ell)} \xrightarrow{x^{(\ell)7}_{2,0}} 12$$
in $Q_G$, so, according to the notation of Lemma~\ref{lem_comp_f.p.}, we have $i_1=12$, $i_2=8$, $i_3=7$, $i_4=12$, $a_1=1$, $a_2=a_3=0$, $p_1=p_2=0$, $p_3=2$, $h=1$, $f_1=3$, $\lambda_{i_3,\ell}=\epsilon_3^{\ell+1}$. In this case the element \eqref{eq_composition_fp} becomes
\begin{align*}
 & \frac{1}{3} \sum_{k=0}^2 \lambda_{i_3,\ell} \beta^{p_1}v_1 \otimes \beta^{a_1+p_2}v_1 \otimes \beta^{k+a_1+a_2+p_3}v_1 \otimes \beta^{k+a_1+a_2+a_3}v_1 \\
= & \frac{1}{3} \left( \epsilon_3^{3(\ell+1)}v_1 \otimes \beta v_1 \otimes v_1 \otimes \beta v_1 + \epsilon_3^{2(\ell+1)}v_1 \otimes \beta v_1 \otimes \beta v_1 \otimes \beta^2 v_1 + \epsilon_3^{\ell+1}v_1 \otimes \beta v_1 \otimes \beta^2 v_1 \otimes v_1 \right).
\end{align*}
Note that if we apply the antisymmetrizer to it we get
$$\frac{1}{3} \epsilon_3^{\ell+1}v_1 \wedge \beta v_1 \wedge \beta^2 v_1 \otimes v_1.$$
This implies that the coefficient $c_{\bf p}$ of $\bf p$ in $\omega_G$ is equal to $\frac{1}{3}\epsilon_3^{\ell+1}$, and in particular we have that $\bf p\in\supp(\omega_G)$.
\end{example}

The following proposition, which will be used in the proof of Theorem~\ref{thm_Psi(omegaG)=Phi(omegaA)-1fp}, gives a necessary and sufficient condition for a path without fixed points to lie in $\supp(\omega_G)$.

\begin{prop}\label{prop_contrib_non-f.p.}
Let
$$\bf p \colon i_1 \xrightarrow{x^{i_1}_{p_1,a_1}} \dots \xrightarrow{x^{i_s}_{p_s,a_s}} i_{s+1}$$
be a path in $Q_G$ which contains no fixed points. Then $\bf p$ lies in the support of $\omega_G$ if and only if $\{p_1,a_1+p_2,\dots,a_1+\dots+a_{s-1}+p_s\}$ is a complete set of representatives of the integers modulo $s$.
\end{prop}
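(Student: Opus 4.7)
The plan is to combine the explicit formula for the path composition from Lemma~\ref{lem_comp_non-f.p.} with the description of the superpotential coefficients in Theorem~\ref{thm_superpotential_McKay}, and then analyze when the antisymmetrizer applied to the resulting element vanishes.

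First I would apply Lemma~\ref{lem_comp_non-f.p.} to write down the image of $v_{i_1}$ under the composition
\[
T_{i_1} \xrightarrow{x^{i_1}_{p_1,a_1}} V \otimes T_{i_2} \to \dots \to V^{\otimes s} \otimes T_{i_{s+1}},
\]
which equals $\beta^{q_1}v_1 \otimes \beta^{q_2}v_1 \otimes \dots \otimes \beta^{q_s}v_1 \otimes \beta^{a_1+\dots+a_s}v_{i_{s+1}}$, where $q_k := a_1+\dots+a_{k-1}+p_k$. By Theorem~\ref{thm_superpotential_McKay}, $\bf p\in\supp(\omega_G)$ if and only if $c_{\bf p}\neq 0$, where $c_{\bf p}$ is the scalar such that applying $\alpha^s_V \otimes \id$ to the above element yields $c_{\bf p}\cdot \tau(v_{i_{s+1}})$ (using the identification $\det_V \otimes T_{i_{s+1}}=\tau(T_{i_{s+1}})$), noting that $\dim t(\bf p)=s\neq 0$ because $i_{s+1}$ is not a fixed point.

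Next I would observe that the antisymmetrization yields
\[
\alpha^s_V(\beta^{q_1}v_1 \otimes \dots \otimes \beta^{q_s}v_1) = \beta^{q_1}v_1 \wedge \beta^{q_2}v_1 \wedge \dots \wedge \beta^{q_s}v_1 \in \textstyle\bigwedge^s V.
\]
The key point is that $\{v_1,\beta v_1,\dots,\beta^{s-1}v_1\}$ is a basis of $V$, and the relation $\beta^s v_1=\alpha^t v_1=\epsilon_m^t v_1$ implies that for every integer $q$, the vector $\beta^q v_1$ is a \emph{nonzero} scalar multiple of $\beta^{q\bmod s}v_1$. Consequently, the wedge above is nonzero precisely when the residues $q_1,\dots,q_s$ modulo $s$ are pairwise distinct, i.e.\ when $\{q_1,\dots,q_s\}=\{p_1,a_1+p_2,\dots,a_1+\dots+a_{s-1}+p_s\}$ is a complete set of representatives of $\Z/s\Z$.

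Combining these steps, $c_{\bf p}\neq 0$ if and only if the residues $q_k\bmod s$ are all distinct, which is precisely the condition in the statement. The only subtlety to check is that when the wedge is nonzero, one genuinely recovers a nonzero multiple of the chosen generator $w_c^{(d_s)}\otimes v_{i_{s+1}}$ of $\tau(T_{i_{s+1}})$ under the twist isomorphism \eqref{eq_twist-non-fp} of Section~\ref{sec_twist}, but this is automatic since $\bigwedge^s V$ is one-dimensional and the wedge is either zero or a nonzero scalar multiple of any fixed generator. There is no real difficulty beyond careful bookkeeping; the main thing to keep in mind is that $\beta^q v_1$ never vanishes, so the nonvanishing of the wedge is governed purely by the residues modulo $s$.
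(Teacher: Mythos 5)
Your proposal is correct and follows essentially the same route as the paper's proof: apply Lemma~\ref{lem_comp_non-f.p.}, compose with the antisymmetrizer, and observe that the resulting wedge is nonzero precisely when the exponents $p_1, a_1+p_2,\dots,a_1+\dots+a_{s-1}+p_s$ are pairwise distinct modulo $s$. The extra care you take with $\beta^q v_1$ being a nonzero multiple of $\beta^{q\bmod s}v_1$ matches the paper's own remark on this abuse of notation following Lemma~\ref{lem_calculations-arrows}.
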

\begin{proof}
The path $\bf p$ can be identified with the composition
\[
T_{i_1} \to \dots \to V^{\otimes s} \otimes T_{i_{s+1}}.
\]
Composing this map with the antisymmetrizer we obtain a morphism
\[
T_{i_1} \to \det_V \otimes T_{i_{s+1}},
\]
which, by Lemma~\ref{lem_comp_non-f.p.}, sends $v_{i_1}$ to $\beta^{p_1}v_1 \wedge \beta^{a_1+p_2}v_1 \wedge \dots \wedge \beta^{a_1+\dots+a_{s-1}+p_s}v_1 \otimes \beta^{a_1+\dots+a_s}v_{i_{s+1}}$. Then it is clear that this element is different from zero if and only if the integers  $p_1,a_1+p_2,\dots,a_1+\dots+a_{s-1}+p_s$ are pairwise different modulo $s$.
\end{proof}

\begin{example}\label{ex_sec6_2}
Let us consider again the case where $m=21$, $r=4$, $s=3$, $t=0$. The path
$$12 \xrightarrow{x^{12}_{2,0}} 17 \xrightarrow{x^{17}_{0,1}} 4 \xrightarrow{x^{4}_{0,2}} 12$$
is in $\supp(\omega_G)$ because $\{p_1,a_1+p_2,a_1+a_2+p_3\}=\{2,0,1\}$.

On the other end, we have that
$$12 \xrightarrow{x^{12}_{2,0}} 17 \xrightarrow{x^{17}_{2,2}} 4 \xrightarrow{x^{4}_{0,2}} 12$$
is not in $\supp(\omega_G)$, since $\{p_1,a_1+p_2,a_1+a_2+p_3\}=\{2,2,2\}$.
\end{example}

Proposition~\ref{prop_contrib_non-f.p.} tells us exactly when a path containing no fixed points is in the support of $\omega_G$. However, in view of Lemma~\ref{lem_comp_f.p.}, obtaining a similar statement for paths containing fixed points seems more difficult and we will not prove such a result. Nevertheless, we will be able to show that every path in the support of $\omega_G$ comes, in a certain sense, from a path in the support of $\omega_A$, and this will be sufficient for our purposes. In order to make this more precise, we give the following definition.

\begin{defn}\label{def_Qtilde}
Let $\tilde{Q}_G$ be the quiver defined in the following way. Its set of vertices is $\cal D$, while for the arrows $i \to j$ we have the following three possibilities:
\begin{enumerate}[label=(\arabic*)]
\item $i,j\in \cal D \smallsetminus \cal F$: in this case the arrows between $i$ and $j$ in $\tilde{Q}_G$ are the same as in $Q_G$;
\item $i\in \cal D \smallsetminus \cal F$ and $j\in \cal F$: we put an arrow $x_{p,0}^i$ in $\tilde{Q}_G$ whenever $j\equiv i-r^p$ for some $p$;
\item $i\in \cal F$ and $j\in \cal D \smallsetminus \cal F$: we put an arrow $x_{p,0}^i$ in $\tilde{Q}_G$ whenever $j\equiv i-r^p$ for some $p$.
\end{enumerate}
\end{defn}
We will see later that $\tilde{Q}_G$ can be seen as the quotient of $Q_A$ by an action of $G/A$.

We now define two morphisms of quivers
$$ \Phi\colon Q_A \to \tilde{Q}_G, \qquad \Psi\colon Q_G \to \tilde{Q}_G. $$

For $i\in(Q_A)_0$ we put $\Phi(i)=\ul i$. Given an arrow $x^i_q\colon i\to i-r^q$ in $Q_A$, we set $\Phi(x^i_q)=x^{\ul i}_{p,a}\colon \ul i \to \ul{i-r^q}$, where
\[
(p,a) = \left\{ \begin{array}{cc}
(q-\kappa_i, \kappa_{i-r^q}-\kappa_i) & \text{ if } \ul i, \ul{i-r^q} \in \cal D \smallsetminus \cal F; \\
(q-\kappa_i, 0) & \text{ if } \ul i \in \cal D \smallsetminus \cal F,\; \ul{i-r^q} \in \cal F; \\
(q-\kappa_{i-r^q}, 0) & \text{ if } \ul i \in \cal F,\; \ul{i-r^q} \in \cal D \smallsetminus \cal F.
\end{array} \right.
\]

We define $\Psi(i)=i$ if $i\in \cal D \smallsetminus \cal F$ and $\Psi(i^{(\ell)})=i$ if $i\in\cal F$, $0\leq\ell\leq s-1$. Moreover we put $\Psi(x^i_{p,a})=x^i_{p,a}$, $\Psi(x^{i(\ell)}_{p,0})=x^i_{p,0}$, $\Psi(x^{(\ell)i}_{p,0})=x^i_{p,0}$ whenever the notation makes sense. So basically we can consider $\Psi$ simply as the map which ``forgets'' about the splitting of fixed points (see Figure~\ref{fig_Psi}).

Consider the subquivers $Q_G \smallsetminus \cal F$ and $\tilde{Q}_G \smallsetminus \cal F$ of, respectively, $Q_G$ and $\tilde{Q}_G$, which are obtained by removing the fixed points and the arrows adjacent to them. Then it is clear that $\Psi \mid_{Q_G \smallsetminus \cal F} \colon Q_G \smallsetminus \cal F \to \tilde{Q}_G \smallsetminus \cal F$ is an isomorphism. So the fact that we used the same names to indicate vertices and arrows in these subquivers will cause no confusion: actually, we will often treat them as if they were the same quiver.

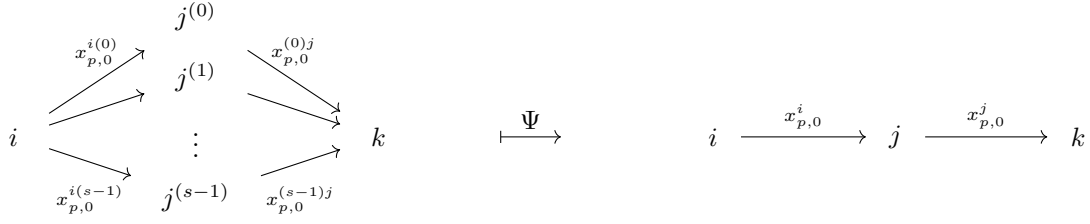
\begin{figure}
\begin{tikzpicture}[scale=0.8,inner sep=4mm]

\node (A) at (-3,0){$i$};
\node (B0) at (0,2){$j^{(0)}$};
\node (B1) at (0,1){$j^{(1)}$};
\node (B2) at (0,0){$\vdots$};
\node (B3) at (0,-1){$j^{(s-1)}$};
\node (C) at (3,0){$k$};

 \draw [->] (A) -> (B0) node[midway,above,yshift=-2mm] {{\scriptsize $x^{i(0)}_{p,0}$}};
 \draw [->] (A) -> (B1);
 \draw [->] (A) -> (B3) node[midway,below,yshift=1mm] {{\scriptsize $x^{i(s-1)}_{p,0}$}};

 \draw [->] (B0) -> (C) node[midway,above,yshift=-2mm] {{\scriptsize $x^{(0)j}_{p,0}$}};
 \draw [->] (B1) -> (C);
 \draw [->] (B3) -> (C) node[midway,below,yshift=1mm] {{\scriptsize $x^{(s-1)j}_{p,0}$}};

\begin{scope}[xshift=11.5cm,inner sep=3mm]
\node (D) at (-3,0){$i$};
\node (E) at (0,0){$j$};
\node (F) at (3,0){$k$};

 \draw [->] (D) -> (E) node[midway,above,yshift=-2mm] {{\scriptsize $x^i_{p,0}$}};
 \draw [->] (E) -> (F) node[midway,above,yshift=-2mm] {{\scriptsize $x^j_{p,0}$}};
\end{scope}

\begin{scope}[xshift=5cm]
 \draw [|->] (0,0) -> (1,0) node[midway,above,yshift=-3mm] {$\Psi$};
\end{scope}

\end{tikzpicture}
\caption{The local behaviour of $\Psi$ at fixed points.}\label{fig_Psi}
\end{figure}

Recall that $G/A$ acts on the vertices of $Q_A$ via the automorphism $\varphi$ given by the multiplication by $r$. We can extend this to an automorphism of $Q_A$ by setting $\varphi(x^i_q)=x^{ri}_{q+1}$.

Consider now the orbit quiver $Q_A/(G/A)$. We will denote by $[i]$ the orbit of $i\in(Q_A)_0$ and by $[x^i_q]$ the orbit of $x^i_q\in(Q_A)_1$.

\begin{prop}\label{prop_Q_A/(G/A)iso-tilde(Q)_G}
The morphism $\Phi$ induces an isomorphism of quivers $\tilde{\Phi}\colon Q_A/(G/A) \to \tilde{Q}_G$.
\end{prop}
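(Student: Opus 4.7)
The plan has three parts: (i) show $\Phi$ factors through the orbit quiver $Q_A/(G/A)$, (ii) verify bijectivity on vertices, (iii) verify bijectivity on arrows via a case-by-case count.

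For (i), the compatibility $\Phi\circ\varphi = \Phi$ on vertices is immediate since $\ul{ri}=\ul i$. On arrows we must check that $\Phi(x^{ri}_{q+1}) = \Phi(x^i_q)$. The key input is the identity $\kappa_{ri} \equiv \kappa_i + 1 \pmod s$ whenever $\ul i$ is not a fixed point: this holds because, by \ref{item_r-not1} together with the primality assumption \ref{item_s-prime}, the element $r$ has order exactly $s$ in $(\Z/m\Z)^\times$, so $r^{\kappa_{ri}}\ul i = ri = r \cdot r^{\kappa_i}\ul i$ forces $\kappa_{ri} \equiv \kappa_i + 1 \pmod s$. A parallel identity $\kappa_{ri-r^{q+1}} \equiv \kappa_{i-r^q}+1 \pmod s$ applies to the target $i-r^q$. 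Substituting into each of the three cases of the definition of $\Phi$ yields $\Phi(x^{ri}_{q+1})=\Phi(x^i_q)$; when $i \in \cal F$ the identity $ri=i$ fixes the source while the target still shifts by $r$, so the same computation for the target (coupled with Proposition~\ref{prop_r_not_equiv_1} to rule out a simultaneously fixed target) handles the remaining situation.

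Part (ii) is immediate since $\cal D$ is a transversal. For (iii), fix $\ul i, \ul j \in \cal D$. When both are non-fixed, every $\varphi$-orbit of arrows from $[\ul i]$ to $[\ul j]$ contains a unique representative $x^{\ul i}_q$ starting at $\ul i$, and the assignment $q \mapsto (q, \kappa_{\ul i - r^q})$ bijects such $q$'s with the pairs $(p,a)$ satisfying $r^a\ul j \equiv \ul i - r^p$, which index the arrows $x^{\ul i}_{p,a}$ of $\tilde Q_G$. When $\ul i \in \cal D\setminus\cal F$ and $\ul j \in \cal F$, the target is its own orbit, so an analogous unique-representative argument produces a bijection between orbits and the at-most-one arrow $x^{\ul i}_{p,0}$ with $\ul j = \ul i - r^p$.

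The main obstacle is the case where $\ul i \in \cal F$, since here $\varphi$ acts trivially on the source while cyclically shifting the subscript, so the $s$ outgoing arrows $x^{\ul i}_0, \dots, x^{\ul i}_{s-1}$ collapse into a single $\varphi$-orbit. One must then check that the $s$ distinct targets $\ul i - r^p$ (distinct because $r$ has order $s$) all lie in one orbit of $\Z/m\Z$, namely the orbit of $\ul i - 1$, which follows from $r(\ul i - r^p) = \ul i - r^{p+1}$ using $r\ul i = \ul i$. Hence exactly one of these targets lies in $\cal D$, matching the unique outgoing arrow from $\ul i$ in $\tilde Q_G$ by the definition of $\Phi$ in case (3). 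Assembling the three cases yields bijectivity of $\tilde\Phi$ on arrows, completing the proof.
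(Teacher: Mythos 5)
Your proof is correct and follows essentially the same route as the paper's: factor $\Phi$ through the $G/A$-action using the identities $\ul{ri}=\ul{i}$ and $\kappa_{ri}\equiv\kappa_i+1 \pmod s$, then verify bijectivity on arrows case by case according to which endpoints are fixed points. The only cosmetic difference is that you organize the arrow count as an explicit bijection between $\varphi$-orbits (normalized to the unique representative with source in $\cal D$, plus the special treatment of a fixed source) and the arrows of $\tilde{Q}_G$, whereas the paper checks surjectivity and injectivity separately on arbitrary arrows; the underlying computations are identical.
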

\begin{proof}
For each $i\in(Q_A)_0$ we have $\Phi(\varphi(i))=\Phi(ri)=\ul{ri}=\ul{i}$, so $\Phi$, as a map between vertices, factors through the action of $G/A$. Hence we get a map $(Q_A/(G/A))_0 \to (\tilde{Q}_G)_0$, $[i]\mapsto \Phi(i)=\ul{i}$, which is obviously a bijection because $(\tilde{Q}_G)_0=\cal D$ is a set of representatives of the $G/A$-orbits.

Now we consider the arrows. For each $x^i_q\in(Q_A)_1$ we have $\Phi(\varphi(x^i_q)) = \Phi(x^{ri}_{q+1}) = x^{\ul{ri}}_{p,a} = x^{\ul{i}}_{p,a}$, where
\[
(p,a) = \left\{ \begin{array}{cc}
(q+1-\kappa_{ri}, \kappa_{r(i-r^q)}-\kappa_{ri}) & \text{ if } \ul{ri}, \ul{r(i-r^q)} \in \cal D \smallsetminus \cal F; \\
(q+1-\kappa_{ri}, 0) & \text{ if } \ul{ri} \in \cal D \smallsetminus \cal F,\; \ul{r(i-r^q)} \in \cal F; \\
(q+1-\kappa_{r(i-r^q)}, 0) & \text{ if } \ul{ri} \in \cal F,\; \ul{r(i-r^q)} \in \cal D \smallsetminus \cal F.
\end{array} \right.
\]
It is easy to check that $\ul{ri}=\ul{i}$, $\ul{r(i-r^q)}=\ul{i-r^q}$, $\kappa_{ri}=\kappa_i+1$, $\kappa_{r(i-r^q)}=\kappa_{i-r^q}+1$, so it becomes clear from the definition of $\Phi$ that $\Phi(\varphi(x^i_q))=\Phi(x^i_q)$. Hence $\Phi$ factors through the action of $G/A$ and we get a map $\tilde{\Phi}\colon (Q_A/(G/A))_1 \to (\tilde{Q}_G)_1$, $[x^i_q]\mapsto\Phi(x^i_q)$.

To show that this map is surjective, take an arrow $x^i_{q,b}\colon i \to j$ in $(\tilde{Q}_G)_1$, so $r^b j \equiv i-r^q$ holds. Note that $\kappa_i=0$ and $\kappa_{i-r^q}=b$. By definition, $\Phi(x^i_q)=x^i_{p,a}$, where
\[
(p,a) = \left\{ \begin{array}{cc}
(q-\kappa_i, \kappa_{i-r^q}-\kappa_i)=(q,b) & \text{ if } i, j \in \cal D \smallsetminus \cal F; \\
(q-\kappa_i, 0)=(q,b) & \text{ if } i \in \cal D \smallsetminus \cal F,\; j \in \cal F; \\
(q-\kappa_{i-r^q}, 0)=(q,b) & \text{ if } i \in \cal F,\; j \in \cal D \smallsetminus \cal F.
\end{array} \right.
\]
This means that $\Phi(x^i_q)=x^i_{q,b}$, so we showed that $\Phi$ (and consequently $\tilde{\Phi}$) is surjective on arrows.

Now we prove that $\tilde{\Phi}$ is injective. Let $x^i_p$, $x^j_q$ be two arrows in $Q_A$ and suppose that $\Phi(x^i_p)=\Phi(x^j_q)$: we want to show that $x^i_p$ and $x^j_q$ lie in the same $G/A$-orbit. Clearly we have that $\ul i=\ul j$ and $\ul{i-r^p}=\ul{j-r^q}$, so there exist $a,b\in \{0,\dots,s-1\}$ such that $j \equiv r^ai \pmod m$ and $j-r^q \equiv r^b(i-r^p) \pmod m$ (we take $a=0$ and $b=0$ respectively when $i$ and $i-r^p$ are fixed points). Note that $r^{\kappa_i}\ul{j} \equiv r^{\kappa_i}\ul{i} \equiv i \equiv r^{-a}j$, so $\kappa_j=\kappa_i-a$. Similarly we have $\kappa_{j-r^q}=\kappa_{i-r^p}-b$. By definition of $\Phi$, we write $\Phi(x^i_p)=x^{\ul i}_{p',a'}$ and $\Phi(x^j_q)=x^{\ul j}_{q',b'}$, where
\[
(p',a') = \left\{ \begin{array}{cc}
(p-\kappa_i, \kappa_{i-r^p}-\kappa_i) & \text{ if } \ul i, \ul{i-r^p} \in \cal D \smallsetminus \cal F; \\
(p-\kappa_i, 0) & \text{ if } \ul i \in \cal D \smallsetminus \cal F,\; \ul{i-r^p} \in \cal F; \\
(p-\kappa_{i-r^p}, 0) & \text{ if } \ul i \in \cal F,\; \ul{i-r^p} \in \cal D \smallsetminus \cal F
\end{array} \right.
\]
and
\[
(q',b') = \left\{ \begin{array}{cc}
(q-\kappa_j, \kappa_{j-r^q}-\kappa_j) & \text{ if } \ul j, \ul{j-r^q} \in \cal D \smallsetminus \cal F; \\
(q-\kappa_j, 0) & \text{ if } \ul j \in \cal D \smallsetminus \cal F,\; \ul{j-r^q} \in \cal F; \\
(q-\kappa_{j-r^q}, 0) & \text{ if } \ul j \in \cal F,\; \ul{j-r^q} \in \cal D \smallsetminus \cal F.
\end{array} \right.
\]
By hypothesis $(p',a') = (q',b')$. Hence, in the first two cases we have $p-\kappa_i=q-\kappa_j=q-\kappa_i+a$, so $q=p+a$. This means that $x^j_q=x^{r^ai}_{p+a}=\varphi^a(x^i_p)$ and so $[x^i_p]=[x^j_q]$. In the last case we have $p-\kappa_{i-r^p}=q-\kappa_{j-r^q}=q-\kappa_{i-r^p}+b$, so $q=p+b$. Moreover, both $i$ and $j$ are fixed points, so we can write $j \equiv r^bi \pmod m$. Hence we have that $x^j_q=x^{r^bi}_{p+b}=\varphi^b(x^i_p)$ and so $[x^i_p]=[x^j_q]$.
\end{proof}

\begin{example}
We shall illustrate the behaviour of $\Phi$ in the case of Example~\ref{ex_sec6}. The quivers $Q_A$, $Q_G$ and $\tilde{Q}_G$ are depicted, respectively, in Figures~\ref{fig_quiver-s=3}, \ref{fig_cut C_1^1} and \ref{fig_quiver cut C^1_1}. Recall that their set of vertices are
$$(Q_A)_0 = \Z/21\Z,\quad (Q_G)_0 = (\tilde{Q}_G)_0 = \{0,4,7,8,9,12,13,14,17\}.$$
The map $\Phi$ is given on vertices by:
\begin{eqnarray*}
\Phi(0)=0, & \Phi(1)=\Phi(4)=\Phi(16)=4, & \Phi(2)=\Phi(8)=\Phi(11)=8, \\
\Phi(7)=7, & \Phi(9)=\Phi(15)=\Phi(18)=9, & \Phi(3)=\Phi(6)=\Phi(12)=12, \\
\Phi(14)=14, & \Phi(10)=\Phi(13)=\Phi(19)=13, & \Phi(5)=\Phi(17)=\Phi(20)=17.
\end{eqnarray*}
We now describe how $\Phi$ behaves only on some of the arrows of $Q_G$:
\begin{eqnarray*}
\Phi(x_0^{17}\colon 17 \to 16)=\Phi(x_1^{5}\colon 5 \to 1)=\Phi(x_2^{20}\colon 20 \to 4)=x_{0,1}^{17}\colon 17 \to 4, \\
\Phi(x_0^{5}\colon 5 \to 4)=\Phi(x_1^{20}\colon 20 \to 16)=\Phi(x_2^{17}\colon 17 \to 1)=x_{2,2}^{17}\colon 17 \to 4, \\
\Phi(x_0^{0}\colon 0 \to 20)=\Phi(x_1^{0}\colon 0 \to 17)=\Phi(x_2^{0}\colon 0 \to 5)=x_{1,0}^{0}\colon 0 \to 17.
\end{eqnarray*}
\end{example}

There is a natural way to define an automorphism of $\tilde{Q}_G$ which is compatible with both the twists of $Q_A$ and $Q_G$ via the morphisms $\Phi$ and $\Psi$. More precisely, we have the following proposition/definition.

\begin{prop}\label{prop_def-tau}
For each $i\in (Q_A)_0$ and each $x^i_q\in (Q_A)_1$, set $\tau([i]):=[\tau(i)]=[i+c]$ and $\tau([x^i_q]):=[\tau(x^i_q)]=[x^{i+c}_q]$. Then this assignments induce a well defined automorphism of $Q_A/(G/A)$, which in turn induces an automorphism of $\tilde{Q}_G$ under the isomorphism of Proposition~\ref{prop_Q_A/(G/A)iso-tilde(Q)_G}: by an abuse of notation, we will again denote both these maps by $\tau$. Moreover these maps are compatible with the other twists, in the sense that the following diagram commutes:
\[
\begin{tikzcd}
Q_{A} \arrow[r] \arrow[d,"\tau"] & Q_{A}/(G/A) \arrow[r,"\tilde{\Phi}"] \arrow[d,"\tau"] & \tilde{Q}_{G} \arrow[d,"\tau"] & Q_{G} \ar[l,swap,"\Psi"] \arrow[d,"\tau'"] \\
Q_{A} \arrow[r] & Q_{A}/(G/A) \arrow[r,"\tilde{\Phi}"] & \tilde{Q}_G & Q_G \ar[l,swap,"\Psi"]
\end{tikzcd}
\]


\end{prop}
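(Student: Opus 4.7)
The strategy is to first descend $\tau$ from $Q_A$ to the orbit quiver $Q_A/(G/A)$, then transport along the isomorphism $\tilde\Phi$ of Proposition~\ref{prop_Q_A/(G/A)iso-tilde(Q)_G}, and finally verify compatibility with $\tau'$ via $\Psi$. The main work is the last step.

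First I would check that the twist $\tau\colon Q_A\to Q_A$, given by $i\mapsto i+c$ and $x_q^i\mapsto x_q^{i+c}$, commutes with the $G/A$-action $\varphi$. Since $c$ is a fixed point ($rc\equiv c\pmod m$), for any vertex $i$ we have $\varphi(\tau(i))=r(i+c)=ri+c=\tau(\varphi(i))$, and for any arrow $\varphi(\tau(x_q^i))=x_{q+1}^{r(i+c)}=x_{q+1}^{ri+c}=\tau(\varphi(x_q^i))$. Therefore $\tau$ descends to a quiver morphism on $Q_A/(G/A)$ by the formulas $[i]\mapsto[i+c]$ and $[x_q^i]\mapsto[x_q^{i+c}]$; its inverse is induced by $j\mapsto j-c$, so it is an automorphism. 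Assumption~\ref{ass_repclosed} (closure of $\cal D$ under addition by $c$) is precisely what guarantees that the induced map on $\tilde Q_G$ is internally consistent under the identifications used.

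Since $\tilde\Phi\colon Q_A/(G/A)\xrightarrow{\sim}\tilde Q_G$ is an isomorphism of quivers, I would define $\tau$ on $\tilde Q_G$ as $\tilde\Phi\circ\tau\circ\tilde\Phi^{-1}$. By construction this makes the middle square commute, and the left square commutes tautologically, since descending $\tau$ to the orbit quiver is exactly the definition of the descended map.

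The core step is the commutativity of the right square, i.e.\ $\Psi\circ\tau'=\tau\circ\Psi$. On vertices this is a direct check: for $i\in\cal D\smallsetminus\cal F$, $\Psi(\tau'(i))=\Psi(i+c)=i+c=\tilde\Phi([i+c])=\tau(\Psi(i))$, and for a split vertex $i^{(\ell)}$ with $i\in\cal F$, $\Psi(\tau'(i^{(\ell)}))=\Psi((i+c)^{(\ell+d_s)})=i+c=\tau(i)=\tau(\Psi(i^{(\ell)}))$. On arrows I would go through the three cases of Lemma~\ref{lem_action-tau}. In each case the lemma asserts that $\tau'$ sends the arrow, as a quiver arrow, to the arrow with $i$ replaced by $i+c$ (and, in the fixed-point cases, $\ell$ replaced by $\ell+d_s$); applying $\Psi$ forgets the $\ell$-superscripts and gives the arrow of $\tilde Q_G$ obtained from $\Psi(x)$ by replacing $i$ by $i+c$. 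To identify this with $\tau(\Psi(x))$, I would write $\Psi(x)=\tilde\Phi([x_q^j])$ for some arrow $x_q^j$ of $Q_A$ with $\ul j=i$ (existence by the surjectivity proved in Proposition~\ref{prop_Q_A/(G/A)iso-tilde(Q)_G}), so that $\tau(\Psi(x))=\tilde\Phi([x_q^{j+c}])=\Phi(x_q^{j+c})$; then the identities $\ul{j+c}=\ul j+c$, $\kappa_{j+c}=\kappa_j$ and $\kappa_{(j+c)-r^q}=\kappa_{j-r^q}$ (all consequences of $c$ being fixed and of Assumption~\ref{ass_repclosed}) combined with the explicit formula for $\Phi$ yield $\Phi(x_q^{j+c})=x_{p,a}^{i+c}$, matching $\Psi(\tau'(x))$ on the nose. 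The hardest part is mainly bookkeeping of the indices $p,a,\kappa_\bullet$ in this last identification, especially in the two fixed-point cases, but no new ingredient beyond Lemma~\ref{lem_action-tau} and the formula for $\Phi$ is needed.
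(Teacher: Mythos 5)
Your proposal is correct and follows essentially the same route as the paper: descend $\tau$ to the orbit quiver using that $c$ is a fixed point ($rc\equiv c$), transport along $\tilde\Phi$, and verify the right-hand square directly. The only difference is that the paper checks the right square on vertices only, whereas you also carry out the arrow-level verification via Lemma~\ref{lem_action-tau} and the identities $\ul{j+c}=\ul j+c$, $\kappa_{j+c}=\kappa_j$; this extra bookkeeping is sound and actually fills a step the paper leaves implicit.
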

\begin{proof}
For each $i\in (Q_A)_0$ and each $x^i_q\in (Q_A)_1$, we have $\tau([ri]) = [ri+c] = [ri+rc] = [i+c] = \tau([i])$ and similarly $\tau([x^{ri}_{q+1}]) = [x^{ri+rc}_{q+1}] = \tau([x^i_q])$, so $\tau$ is well defined on $Q_A/(G/A)$.

For the commutativity of the diagram, note that the two squares on the left commute by definition, so we only have to deal with the square on the right. For $i\in \cal D \smallsetminus \cal F$ we have $\tau(\Psi(i))=\tau(i)={i+c}=\Psi({i+c})=\Psi(\tau'(i))$, while for $i\in \cal F$ we have  $\tau(\Psi(i^{(\ell)}))=\tau(i)=i+c=\Psi((i+c)^{(\ell+d_s)})=\Psi(\tau'(i^{(\ell)}))$. Hence the result follows.
\end{proof}

\begin{thm}\label{thm_Psi(omegaG)cont_inPhi(omegaA)}
Let $\bf p$ be a path in $Q_G$ which lies in $\supp(\omega_G)$. Then there exists a path $\tilde{\bf p}\in\supp(\omega_A)$ such that $\Psi(\bf p)=\Phi(\tilde{\bf p})$.
\end{thm}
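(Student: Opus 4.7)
The plan is to build $\tilde{\bf p}$ by lifting $\Psi(\bf p)$ step by step through $\Phi$, and then to show that the hypothesis $\bf p\in\supp(\omega_G)$ forces the arrow exponents of the lift to form a complete residue system modulo $s$---which, by Proposition~\ref{prop_quiver_A}, is exactly the condition for $\tilde{\bf p}\in\supp(\omega_A)$. Write $\Psi(\bf p)\colon j_1\xrightarrow{x^{j_1}_{p_1,a_1}}j_2\to\cdots\to j_{s+1}$ with fixed points among the $j_h$ occurring at positions $f_1<\cdots<f_h$. By Proposition~\ref{prop_r_not_equiv_1} we have $f_{j+1}-f_j\geq 2$ and $a_{f_j-1}=a_{f_j}=0$; applying Lemma~\ref{lem_cyclic-permutation} and Proposition~\ref{prop_def-tau} to cyclically shift $\bf p$ if necessary (using $\sigma^\tau$-invariance to transport between $\supp(\omega_G)$ and $\supp(\omega_A)$), we may further assume $j_1,j_{s+1}\notin\cal F$, so that Lemma~\ref{lem_comp_f.p.} applies directly.

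Set $j_1'=j_1\in(Q_A)_0$ (so $\kappa_{j_1'}=0$) and walk along $\Psi(\bf p)$ pulling each arrow back through $\Phi$. Inspecting the three cases in the definition of $\Phi$ shows that at arrows with neither endpoint in $\cal F$, and at arrows entering a fixed point, the exponent is forced to $q_h\equiv p_h+\kappa_{j_h'}\pmod s$, and inductively $\kappa_{j_{h+1}'}=\kappa_{j_h'}+a_h$. At arrows leaving a fixed point $j_{f_j}$, however, $q_{f_j}\in\Z/s\Z$ is free; recording $k_j:=q_{f_j}-p_{f_j}$ as a free parameter gives $\kappa_{j_{f_j+1}'}=k_j$. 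A direct induction, combined with a bijective reparametrization of $(k_1,\dots,k_h)\in(\Z/s\Z)^h$ (invertible because $a_{f_j-1}=a_{f_j}=0$), yields
\[
q_h\equiv k_1+\cdots+k_{j(h)}+a_1+\cdots+a_{h-1}+p_h\pmod s,\qquad h=1,\dots,s,
\]
where $j(h):=|\{j:f_j\leq h\}|$; these are precisely the exponents of $\beta^{q_h}v_1$ appearing in Lemma~\ref{lem_comp_f.p.}. By Proposition~\ref{prop_Q_A/(G/A)iso-tilde(Q)_G}, one has $\Phi(\tilde{\bf p})=\Psi(\bf p)$ for every choice of parameters.

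By Theorem~\ref{thm_superpotential_McKay} together with Lemma~\ref{lem_comp_f.p.}, the coefficient of $\bf p$ in $\omega_G$ equals (up to a nonzero scalar and the identification $T_{i_1}\cong\det_V\otimes T_{i_{s+1}}$) the antisymmetrized image
\[
\frac{1}{s^h}\sum_{k_1,\dots,k_h=0}^{s-1}\Big(\prod_{j=1}^h\epsilon_s^{-ti_{f_j}}\lambda_{i_{f_j},\ell_j}^{s-k_j}\Big)(\beta^{q_1}v_1\wedge\cdots\wedge\beta^{q_s}v_1)\otimes\beta^{q_{s+1}}v_{i_{s+1}}.
\]
The assumption $\bf p\in\supp(\omega_G)$ forces this sum to be nonzero, so at least one tuple $(k_1,\dots,k_h)$ makes the wedge $\beta^{q_1}v_1\wedge\cdots\wedge\beta^{q_s}v_1$ nonzero---that is, makes $\{q_1,\dots,q_s\}$ a complete residue system modulo $s$. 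The corresponding lift $\tilde{\bf p}$ then belongs to $\supp(\omega_A)$ by Proposition~\ref{prop_quiver_A}. The case $h=0$ (no fixed points) reduces directly to Proposition~\ref{prop_contrib_non-f.p.}. The main obstacle is the parameter bookkeeping: the $k_j$ in Lemma~\ref{lem_comp_f.p.} arise from $\beta$-eigenvalues acting on $w_{i_{f_j}}^{(\ell_j)}$, whereas those in the lift index the free choice of arrow exponent out of $j_{f_j}$, and the cumulative exponents $a_1+\cdots+a_{h-1}$ appear in each setting in slightly different ways.
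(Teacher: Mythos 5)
Your proposal is correct and follows essentially the same route as the paper's proof: reduce to the case where the endpoints are not fixed points via a cyclic shift compatible with the twists, invoke Lemma~\ref{lem_comp_f.p.} to see that nonvanishing of the antisymmetrized coefficient forces some tuple $(k_1,\dots,k_h)$ to make $q_1,\dots,q_s$ pairwise distinct modulo $s$, and observe that the lift of $\Psi(\bf p)$ through $\Phi$ determined by that tuple lies in $\supp(\omega_A)$. The paper builds the lift explicitly as $i_1\to i_1-N_1\to\cdots$ with $N_k=r^{q_1}+\cdots+r^{q_k}$ and verifies $\Phi(\tilde{\bf p})=\Psi(\bf p)$ by a congruence computation, while you parametrize the lifts by the free exponents at fixed points and reparametrize to match the $k_j$'s; these are the same argument read in opposite directions.
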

\begin{proof}
We will first consider a particular case and then the general one.

\paragraph*{\textit{Case 1. }} Suppose that $\bf p$ is in the form described in Lemma~\ref{lem_comp_f.p.}, so we retain all the notation from there. The fact that $\bf p$ is in $\supp(\omega_G)$ implies that there exist $k_1,\dots,k_h$ such that the integers $q_1,\dots,q_s$ are pairwise different modulo $s$. Otherwise, there would be two linearly dependent tensor factors for each element of the sum \eqref{eq_composition_fp}, so we would get zero after applying the antisymmetrizer. To simplify the notation, for the rest of the proof we will set $N_k := r^{q_1}+\dots+r^{q_k}$ for each $k=1,\dots,s$ and $N_0:=0$. We also set $f_0:=0$ and $f_{h+1}:=s+1$. Note that what follows will make sense also for $h=0$.

Define $\tilde{\bf p}$ to be the path
$$\tilde{\bf p}\colon i_1 \to i_1-N_1 \to \dots \to i_1-N_s$$
in $Q_A$. Since we assumed the $q_j$'s to be pairwise different, by Proposition~\ref{prop_quiver_A} we deduce that $\tilde{\bf p}\in\supp(\omega_A)$. So it remains to show that $\Psi(\bf p)=\Phi(\tilde{\bf p})$.

Firstly, we claim that
\begin{equation}\label{eq_claim_proof_path}
i_1-N_k \equiv r^{q_{k+1}-p_{k+1}} i_{k+1}
\end{equation}
for all $k=0,\dots,s$. We will prove this by induction on $k$, the case $k=0$ being clear. Suppose that $k\geq 1$, then there exists a $j\in\{1,\dots,h+1\}$ such that $f_{j-1} \leq k \leq f_j-1$. If $k \neq f_j-1$, we have that $q_{k+1}-p_{k+1}=k_1+\dots+k_{j-1}+a_1+\dots+a_k$, so by induction hypothesis we obtain
\begin{align*}
r^{q_{k+1}-p_{k+1}} i_{k+1} & = r^{k_1+\dots+k_{j-1}+a_1+\dots+a_{k-1}} (r^{a_k} i_{k+1}) \equiv r^{q_k-p_k} (i_k-r^{p_k}) \\
 & \equiv i_1-N_{k-1}-r^{q_k} = i_1-N_k.
\end{align*}
If $k = f_j-1$, using the fact that $i_{k+1}$ is a fixed point, we have
\begin{align*}
r^{q_{k+1}-p_{k+1}} i_{k+1} & \equiv r^{q_k-p_k} i_{k+1} \equiv r^{q_k-p_k} (i_k-r^{p_k}) \\
 & \equiv i_1-N_{k-1}-r^{q_k} = i_1-N_k.
\end{align*}
Hence the claim follows.

Now if we apply $\Phi$ to $\tilde{\bf p}$ we obtain the path $\Phi(\tilde{\bf p})\colon i_1 \to \dots \to i_{s+1}$, because $\ul{i_1-N_k}=i_{k+1}$ by the equation \eqref{eq_claim_proof_path}. By definition of $\Phi$, the arrows in this path are given by $x^{i_k}_{p'_k,a'_k}\colon \ul{i_1-N_{k-1}} \to \ul{i_1-N_k}$, where
\[
(p'_k,a'_k) = \left\{ \begin{array}{cc}
(q_k-\kappa_{i_1-N_{k-1}},\kappa_{i_1-N_k}-\kappa_{i_1-N_{k-1}}) & \text{ if } \ul{i_1-N_{k-1}},\; \ul{i_1-N_k} \in \cal D \smallsetminus \cal F; \\
(q_k-\kappa_{i_1-N_{k-1}},0) & \text{ if } \ul{i_1-N_{k-1}} \in \cal D \smallsetminus \cal F,\; \ul{i_1-N_k} \in \cal F; \\
(q_k-\kappa_{i_1-N_k},0) & \text{ if } \ul{i_1-N_{k-1}} \in \cal F,\; \ul{i_1-N_k} \in \cal D \smallsetminus \cal F.
\end{array} \right.
\]
We may observe that in all cases we have $(p'_k,a'_k)=(p_k,a_k)$, because the \eqref{eq_claim_proof_path} implies that, for all $k$,
\[
\kappa_{i_1-N_k} = \left\{ \begin{array}{cc}
q_{k+1}-p_{k+1} & \text{ if } i_1-N_k \text{ is not a fixed point,} \\
0 & \text{ otherwise.}
\end{array} \right.
\]
Hence $\Psi(\bf p)=\Phi(\tilde{\bf p})$.

\paragraph*{\textit{Case 2. }} We now consider the general case. Let $\bf p\in\supp(\omega_G)$ and write $\bf p=\bf p_1\cdots \bf p_s$. By Proposition~\ref{prop_r_not_equiv_1} we can assume that no arrow $\bf p_i$ both starts and ends with a fixed point. Note that $s(\bf p)=\tau(t(\bf p))$, so $s(\bf p)$ is a fixed point if and only if $t(\bf p)$ is. If $s(\bf p)$ is not a fixed point, then $\bf p$ is in the form discussed in Case 1. Otherwise, suppose that $s(\bf p)=s(\bf p_s)$ is a fixed point and consider the path $\bf q:=\tau'(\bf p_s)\bf p_1\cdots \bf p_{s-1}$. By Lemma~\ref{lem_cyclic-permutation}, we have that $\bf q$ lies in $\supp(\omega_G)$; moreover neither $s(\bf q)=s(\bf p_{s-1})=t(\bf p_s)$ nor $t(\bf q)=t(\tau'(\bf p_s))=\tau(t(\bf p_s))$ are fixed points, because otherwise $\bf p_s$ would connect two fixed points. So $\bf q$ is as in case 1, and we can write $\Psi(\bf q)=\Phi(\tilde{\bf q})$ for some path $\tilde{\bf q}=\tilde{\bf q}_1\cdots\tilde{\bf q}_s \in \supp(\omega_A)$.
Now note that, by Proposition~\ref{prop_def-tau}, we have $\Psi(\bf q) = \Psi(\tau'(\bf p_s))\Psi(\bf p_1)\cdots \Psi(\bf p_{s-1}) = \tau(\Psi(\bf p_s))\Psi(\bf p_1)\cdots \Psi(\bf p_{s-1})$, so $\tau(\Psi(\bf p_s))=\Phi(\tilde{\bf q}_1)$, $\Psi(\bf p_i)=\Phi(\tilde{\bf q}_{i+1})$ for all $i=1,\dots,s-1$ and in particular $\Psi(\bf p_s) = \tau^-(\Phi(\tilde{\bf q}_1)) = \Phi(\tau^-(\tilde{\bf q}_1))$. Now set $\tilde{\bf p}:=\tilde{\bf q}_2\cdots\tilde{\bf q}_s\tau^-(\tilde{\bf q}_1)$: then $\Psi(\bf p)=\Phi(\tilde{\bf p})$ and, by Lemma~\ref{lem_cyclic-permutation}, $\tilde{\bf p}\in \supp(\omega_A)$, hence the result follows.
\end{proof}

The previous theorem tells us that $\Psi(\supp(\omega_G)) \subseteq \Phi(\supp(\omega_A))$, but we do not know if the equality holds. The following proposition shows that this happens at least when we restrict the support to paths containing at most one fixed point.

\begin{thm}\label{thm_Psi(omegaG)=Phi(omegaA)-1fp}
Let
$$\bf p \colon i \to i-r^{q_1} \to \dots \to i-r^{q_1}-\dots-r^{q_s}$$
be a path in $Q_A$ which passes through at most one fixed point and suppose that $\bf p\in\supp(\omega_A)$. Then there exists a path $\tilde{\bf p}$ in $Q_G$ such that $\tilde{\bf p}\in\supp(\omega_G)$ and $\Phi(\bf p)=\Psi(\tilde{\bf p})$.
\end{thm}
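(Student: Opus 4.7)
The plan is to construct the lift $\tilde{\bf p}$ in $Q_G$ explicitly and verify that the composition associated to $\tilde{\bf p}$ yields a non-vanishing contribution to $\omega_G$. Since $\bf p\in\supp(\omega_A)$, Proposition~\ref{prop_quiver_A} tells us that $\{q_1,\dots,q_s\}$ is a permutation of $\{0,1,\dots,s-1\}$ modulo $s$. Write $I_j := i-r^{q_1}-\dots-r^{q_{j-1}}$ for the vertices of $\bf p$. I would split the argument into three cases according to the position of the fixed point (if any).

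\emph{No fixed point.} Then $\Psi$ restricts to an isomorphism on the subquivers away from $\cal F$, so $\Phi(\bf p)$ admits a unique lift $\tilde{\bf p}$ in $Q_G$. Reading off the indices from the definition of $\Phi$, one has $p_j = q_j - \kappa_{I_j}$ and $a_j = \kappa_{I_{j+1}} - \kappa_{I_j}$, so the telescoping sum $a_1 + \dots + a_{j-1} + p_j = q_j - \kappa_{I_1}$ is a translate of $\{q_j\}$ and is therefore a permutation of $\Z/s\Z$. Proposition~\ref{prop_contrib_non-f.p.} then gives $\tilde{\bf p}\in\supp(\omega_G)$.

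\emph{One interior fixed point.} Assume the unique fixed point $I_f$ satisfies $2\leq f\leq s$. For each $\ell\in\{0,\dots,s-1\}$ define $\tilde{\bf p}_\ell$ by replacing $I_f$ with $I_f^{(\ell)}$ and splitting the two adjacent arrows through the corresponding summand. Apply Lemma~\ref{lem_comp_f.p.} with $h=1$ to the chosen lift. A careful telescoping calculation, using the three cases in the definition of $\Phi$ and the fact that $\kappa_{I_f}=0$, shows that the exponents in the lemma satisfy $q_j = Q_j - \kappa_{I_1}$ for $j<f$ and $q_j = k_1 + C + Q_j$ for $j\geq f$, where $C := \kappa_{I_{f-1}} - \kappa_{I_{f+1}}$ is a constant and $Q_j$ denotes the original exponent $q_j$ of $\bf p$. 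Setting $B := \{Q_f,\dots,Q_s\}\subsetneq \Z/s\Z$, the requirement that $\{q_1,\dots,q_s\}$ be pairwise distinct becomes $B + (k_1 + C + \kappa_{I_1}) = B$ as subsets of $\Z/s\Z$. Because $s$ is prime by (M4) and $B$ is a proper subset, the only translation stabilizing $B$ is the trivial one, so there is a unique value $k_1^* \equiv -C - \kappa_{I_1} \pmod s$ producing a non-vanishing wedge. After antisymmetrization the composition of $\tilde{\bf p}_\ell$ yields the nonzero factor $\pm \tfrac{1}{s}\,\epsilon_m^{-tI_f}\,\lambda_{I_f,\ell}^{s-k_1^*}$, which is nonzero for every $\ell$; hence $\tilde{\bf p}_\ell\in\supp(\omega_G)$ regardless of the choice of $\ell$.

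\emph{Fixed point at the endpoint.} If the unique fixed point of $\bf p$ is $I_1$, then $I_{s+1} = I_1 - c$ lies in $\cal F$ as well (since $c$ is a fixed point), and the single-fixed-point hypothesis forces $I_1 = I_{s+1}$, i.e.\ $c\equiv 0\pmod m$. Under $c\equiv 0$ the twist $\tau$ acts trivially on vertices, so a single cyclic rotation $\sigma^\tau$ moves $\bf p$ to a path in $\supp(\omega_A)$ whose unique fixed point sits in interior position $s$; the preceding case then produces a lift, and Lemma~\ref{lem_cyclic-permutation} transfers it back to a lift of $\bf p$ itself. The main obstacle is the interior case: tracking how the $\kappa$-corrections cancel to expose the affine dependence of $\{q_j\}_{j\geq f}$ on $k_1$, and then invoking primality of $s$ to pin down the unique $k_1^*$. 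Once this is done, any choice of $\ell$ works and the theorem follows.
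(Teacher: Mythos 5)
Your proposal is correct and follows the same overall strategy as the paper: lift $\Phi(\bf p)$ explicitly to $Q_G$, and verify non-vanishing via Proposition~\ref{prop_contrib_non-f.p.} in the fixed-point-free case and via Lemma~\ref{lem_comp_f.p.} (with $h=1$) otherwise. The one organizational difference is in the fixed-point case: the paper first uses the cyclic-rotation argument of Lemma~\ref{lem_cyclic-permutation} to move the unique fixed point to position $s$, after which only the last wedge factor depends on $k_1$ and the surviving value of $k_1$ is forced immediately because the first $s-1$ exponents already occupy $s-1$ distinct residues; you instead treat an arbitrary interior position $f$ directly and identify the unique $k_1$ by observing that the condition is that the proper nonempty subset $B=\{Q_f,\dots,Q_s\}$ of $\Z/s\Z$ be stabilized by a translation, which by primality of $s$ (condition \ref{item_s-prime}) forces that translation to be trivial. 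Both routes work; the paper's buys a one-line conclusion at the cost of the rotation bookkeeping, yours avoids the reduction at the cost of invoking primality. Two small remarks: your telescoping formula for $j\geq f$ should read $q_j = k_1 + C + Q_j - \kappa_{I_1}$ (the $-\kappa_{I_1}$ is dropped in your statement, so the correct stabilizer condition is $B+(k_1+C)=B$ and $k_1^*\equiv -C$), but this constant offset does not affect the uniqueness argument or the conclusion; and your observation that an endpoint fixed point forces $c\equiv 0\pmod m$ is correct but not strictly needed, since the rotation handles that case in any event.
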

\begin{proof}
We consider separately the cases where $\bf p$ contains zero or one fixed point.

\paragraph*{\textit{Case 1. }} Suppose that $\bf p$ has no fixed points and consider the path $\Phi(\bf p)$ in $\tilde{Q}_G$. It is clear that we can lift it to a path
$$\tilde{\bf p}\colon \ul{i} \to \ul{i-r^{q_1}} \to \dots \to \ul{i-r^{q_1}-\dots-r^{q_s}}$$
in $Q_G$, since $\Psi$ acts as the identity outside the fixed points. Thus $\Phi(\bf p)=\Psi(\tilde{\bf p})$.

We are left to show that $\tilde{\bf p}\in\supp(\omega_G)$. In the following we will use the notation $i_h := i-r^{q_1}-\dots-r^{q_{h-1}}$. By definition of $\Phi$, the arrows of $\tilde{\bf p}$ are given by $x^{\ul{i_h}}_{p_h,a_h} \colon \ul{i_h} \to \ul{i_{h+1}}$ for each $h=1,\dots,s$, where $p_h:=q_h-\kappa_{i_h}$ and $a_h:=\kappa_{i_{h+1}}-\kappa_{i_h}$. Hence we have that
$$\{p_1,a_1+p_2,\dots,a_1+\dots+a_{s-1}+p_s\} = \{q_1-\kappa_i,q_2-\kappa_i,\dots,q_s-\kappa_i\}.$$
Note that $\bf p\in\supp(\omega_A)$ implies that $q_1,\dots,q_s$ are pairwise different modulo $s$, so the same is true for $p_1,a_1+p_2,\dots,a_1+\dots+a_{s-1}+p_s$. Hence it follows from Proposition~\ref{prop_contrib_non-f.p.} that the path $\tilde{\bf p}$ lies in the support of $\omega_G$.

\paragraph*{\textit{Case 2. }} Now we consider the case where $\bf p$ has exactly one fixed point. We keep the notation $i_h := i-r^{q_1}-\dots-r^{q_{h-1}}$ as in the previous case.

By an argument similar to the one in Case 2 of the proof of Theorem~\ref{thm_Psi(omegaG)cont_inPhi(omegaA)}, we can assume that the only fixed point in $\bf p$ is $i_s$. It is clear that $\Phi(\bf p)$ can be lifted to a path
$$\tilde{\bf p}\colon \ul{i_1} \to \ul{i_2} \to \dots \to \ul{i_{s-1}} \to \ul{i_s}^{(\ell)} \to \ul{i_{s+1}}$$
in $Q_G$, for an integer $0\leq\ell\leq s-1$. Thus $\Phi(\bf p)=\Psi(\tilde{\bf p})$ and the arrows in $\tilde{\bf p}$ are given by $x_{p_j,a_j}^{\ul{i_j}} \colon \ul{i_j} \to \ul{i_{j+1}}$ for $j=1,\dots,s-2$, $x_{p_{s-1},a_{s-1}}^{\ul{i_{s-1}}(\ell)} \colon \ul{i_{s-1}} \to \ul{i_s}^{(\ell)}$, $x_{p_s,a_s}^{(\ell)\ul{i_s}} \colon \ul{i_s}^{(\ell)} \to \ul{i_{s+1}}$, where
\[
(p_j,a_j) = \left\{ \begin{array}{cc}
(q_j-\kappa_{i_j}, \kappa_{i_{j+1}}-\kappa_{i_j}) & \text{ if } 1\leq j\leq s-2, \\
(q_j-\kappa_{i_j}, 0) & \text{ if } j=s-1, \\
(q_j-\kappa_{i_{j+1}}, 0) & \text{ if } j=s.
\end{array} \right.
\]
The path $\tilde{\bf p}$ induces a morphism $T_{\ul{i_1}} \to \bigwedge^s V \otimes T_{\ul{i_{s+1}}}$ which, by Lemma~\ref{lem_comp_f.p.}, sends $v_{\ul{i_1}}$ to
\[
\frac{1}{s} \sum_{k=0}^{s-1} \epsilon_s^{-t \ul{i_s}}\lambda_{\ul{i_s},\ell}^{s-k} \beta^{p_1}v_1 \wedge \beta^{p_1+a_2}v_1 \wedge \dots \wedge \beta^{a_1+\dots+a_{s-2}+p_{s-1}}v_1 \wedge \beta^{k+a_1+\dots+a_{s-1}+p_s}v_1 \otimes \beta^{k+a_1+\dots+a_s}v_{\ul{i_{s+1}}}.
\]
For all $j=0,\dots,s-2$ we have $a_1+\dots+a_j+p_{j+1} = q_{j+1}-\kappa_{i_1}$, while $k+a_1+\dots+a_{s-1}+p_s = k-\kappa_{i_1}+\kappa_{i_{s-1}}+q_s-\kappa_{i_{s+1}}$ and $k+a_1+\dots+a_s = k-\kappa_{i_1}+\kappa_{i_{s-1}}$. So the previous sum becomes
\[
\frac{1}{s} \sum_{k=0}^{s-1} \epsilon_s^{-t \ul{i_s}}\lambda_{\ul{i_s},\ell}^{s-k} \beta^{q_1-\kappa_{i_1}}v_1 \wedge \beta^{q_2-\kappa_{i_1}}v_1 \wedge \dots \wedge \beta^{q_{s-1}-\kappa_{i_1}}v_1 \wedge \beta^{k-\kappa_{i_1}+\kappa_{i_{s-1}}+q_s-\kappa_{i_{s+1}}}v_1 \otimes \beta^{k-\kappa_{i_1}+\kappa_{i_{s-1}}}v_{i_1}.
\]
Note that since $\bf p\in\supp(\omega_A)$ we must have $i_{s+1} \equiv i_1+c \pmod m$, and moreover $\ul{i_{s+1}} \equiv \ul{i_1}+c$ because $\cal D$ is closed under the twist, by Assumption~\ref{ass_repclosed}: this implies that $\kappa_{i_{s+1}}=\kappa_{i_1}$. Hence in the above sum all terms are zero except the one where $k=\kappa_{i_1}-\kappa_{i_{s-1}}$, and thus we obtain
\[
\frac{1}{s} \epsilon_s^{-t \ul{i_s}}\lambda_{\ul{i_s},\ell}^{s-\kappa_{i_1}+\kappa_{i_{s-1}}} \beta^{q_1-\kappa_{i_1}}v_1 \wedge \beta^{q_2-\kappa_{i_1}}v_1 \wedge \dots \wedge \beta^{q_{s-1}-\kappa_{i_1}}v_1 \wedge \beta^{q_s-\kappa_{i_1}}v_1 \otimes v_{i_1}.
\]
This is clearly a non-zero element, because $q_1,\dots,q_s$ are pairwise different modulo $s$, and so the coefficient in $\omega_G$ corresponding to the path $\tilde{\bf p}$ is non-zero. Hence the result follows.
\end{proof}

\begin{cor}\label{cor_supp-s=2,3}
For $s=2,3$ we have $\Psi(\supp(\omega_G)) = \Phi(\supp(\omega_A))$.
\end{cor}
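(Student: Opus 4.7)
The inclusion $\Psi(\supp(\omega_G))\subseteq\Phi(\supp(\omega_A))$ is the content of Theorem~\ref{thm_Psi(omegaG)cont_inPhi(omegaA)}, so the plan is to establish the reverse inclusion. Given $\bf p\in\supp(\omega_A)$, I need to produce some $\tilde{\bf p}\in\supp(\omega_G)$ with $\Phi(\bf p)=\Psi(\tilde{\bf p})$. Theorem~\ref{thm_Psi(omegaG)=Phi(omegaA)-1fp} already delivers such a lift whenever $\bf p$ passes through at most one fixed vertex, so the whole task reduces to understanding, and then reducing to this case, the configurations of $\bf p$ that contain two or more fixed vertices.

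The first step is a combinatorial analysis of the possible positions of fixed vertices along $\bf p$. Proposition~\ref{prop_r_not_equiv_1} rules out two consecutive fixed vertices, and since $\cal F=\ker((r-1)\cdot)\subseteq\Z/m\Z$ is a subgroup containing $c$, we have $i_1\in\cal F$ if and only if $i_{s+1}=i_1-c\in\cal F$. For $s=2$ the only configuration with two or more fixed vertices among the three vertices $i_1,i_2,i_3$ is then that both endpoints are fixed and $i_2$ is non-fixed. For $s=3$ one must additionally rule out the non-adjacent pairs $(i_1,i_3)$ and $(i_2,i_4)$: each such scenario requires $(r-1)(r^a+r^b)\equiv 0\pmod m$ for two distinct $a,b\in\{0,1,2\}$, and a short elementary manipulation using $r^3\equiv 1\pmod m$ together with $(r,m)=1$ forces $r\equiv 1\pmod m$, contradicting~\ref{item_r-not1}. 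Hence for $s\in\{2,3\}$ the only configuration of $\bf p$ outside the "at most one fixed vertex" regime is: both endpoints fixed and all interior vertices non-fixed.

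To handle this remaining case I would exploit the cyclic symmetry of $\omega_A$. Writing $\bf p=\bf p_1\cdots\bf p_s$, set $\bf p':=\sigma^\tau(\bf p)=\tau(\bf p_s)\bf p_1\cdots\bf p_{s-1}$; the identity $\sigma^\tau(\omega_A)=(-1)^{s-1}\omega_A$ gives $\bf p'\in\supp(\omega_A)$. The vertices of $\bf p'$ are $\tau(i_s),i_1,i_2,\dots,i_{s-1},i_s$: the endpoints $\tau(i_s)=i_s+c$ and $i_s$ are both non-fixed (since $i_s$ is non-fixed and $c\in\cal F$), while among the interior vertices only $i_1$ is fixed. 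So $\bf p'$ passes through exactly one fixed vertex, and Theorem~\ref{thm_Psi(omegaG)=Phi(omegaA)-1fp} supplies some $\tilde{\bf p}'\in\supp(\omega_G)$ with $\Phi(\bf p')=\Psi(\tilde{\bf p}')$. Finally, set $\tilde{\bf p}:=(\sigma^{\tau'})^{-1}(\tilde{\bf p}')$: Lemma~\ref{lem_cyclic-permutation} gives $\tilde{\bf p}\in\supp(\omega_G)$, and the compatibility of twists in Proposition~\ref{prop_def-tau} translates into the intertwinings $\Psi\circ\sigma^{\tau'}=\sigma^\tau\circ\Psi$ and $\Phi\circ\sigma^\tau=\sigma^\tau\circ\Phi$, whence
\[
\Psi(\tilde{\bf p})=(\sigma^\tau)^{-1}(\Psi(\tilde{\bf p}'))=(\sigma^\tau)^{-1}(\Phi(\bf p'))=\Phi(\bf p),
\]
as required.

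The main obstacle is the arithmetic verification in the first step for $s=3$; everything afterwards is essentially bookkeeping with the cyclic-rotation operations and Proposition~\ref{prop_def-tau}. This is also the step that ceases to work for $s\geq 5$: longer paths can carry three or more fixed vertices whose distribution a single cyclic rotation cannot reduce to the one-fixed-vertex setting of Theorem~\ref{thm_Psi(omegaG)=Phi(omegaA)-1fp}, so the present approach does not extend without substantial new input.
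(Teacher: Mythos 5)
Your proof is correct and follows essentially the same route as the paper, which simply observes that for $s=2,3$ a support path meets at most one fixed point (up to the cyclic identification of its endpoints, which differ by $c\in\cal F$) and invokes Theorem~\ref{thm_Psi(omegaG)=Phi(omegaA)-1fp}; your explicit rotation step merely makes precise the reduction that the paper delegates to Case~2 of that theorem's proof. The only superfluous part is the arithmetic ruling out the non-adjacent pairs for $s=3$: since $i_1\in\cal F$ if and only if $i_4=i_1-c\in\cal F$, both of those configurations already yield an arrow between two fixed points and are excluded directly by Proposition~\ref{prop_r_not_equiv_1}.
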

\begin{proof}
Clearly in these cases every path of length $s$ can contain at most one fixed point, hence the result follows immediately from Theorem~\ref{thm_Psi(omegaG)=Phi(omegaA)-1fp}.
\end{proof}

\begin{example}
Let us retain the case of Example~\ref{ex_sec6}.
By Corollary~\ref{cor_supp-s=2,3} we can describe explicitly the paths in $\supp(\omega_G)$. We know that $\supp(\omega_A)$ consists in all the cyclic permutations of paths of type $i \xrightarrow{x^i_{0}} i-1 \xrightarrow{x^{i-1}_1} i-5 \xrightarrow{x^{i-5}_2} i$ and $i \xrightarrow{x^i_{0}} i-1 \xrightarrow{x^{i-1}_2} i-17 \xrightarrow{x^{i-17}_1} i$. Hence $\supp(\omega_G)$ is made of the paths which are induced by these ones via the procedure described in the proof of Theorem~\ref{thm_Psi(omegaG)=Phi(omegaA)-1fp}.

For example, given an $\ell\in\{0,1,2\}$, the path $12 \xrightarrow{x^{12}_{0}} 11 \xrightarrow{x^{11}_{1}} 7 \xrightarrow{x^7_{2}} 12$ in $\supp(\omega_A)$ induces a path
$$\bf p\colon 12 \xrightarrow{x^{12}_{0,1}} 8 \xrightarrow{x^{8(\ell)}_{0,0}} 7^{(\ell)} \xrightarrow{x^{(\ell)7}_{2,0}} 12$$
in $\supp(\omega_G)$ (note that this was already shown by a direct computation in Example~\ref{ex_sec6}). The reader should be careful that this is not the same path which is induced by $12 \xrightarrow{x^{12}_{1}} 8 \xrightarrow{x^{8}_{0}} 7 \xrightarrow{x^7_{2}} 12$, since the arrows $x^{12}_{0}\colon 12 \to 11$ and $x^{12}_{1}\colon 12 \to 8$ in $Q_A$ yield two different arrows from $12$ to $8$ in $Q_G$.
\end{example}

\subsection{Gradings of $Q_G$}

We will now illustrate a way to obtain gradings on $Q_G$ which make $\omega_G$ homogeneous.

\begin{prop}\label{prop_grading-A-->grading-G}
Let $g_A$ be a grading on $Q_A$ such that $\omega_A$ is homogeneous of degree $a$ and the morphism of quivers $\Phi$ is $g_A$-gradable. Then there exists a grading $g_G$ on $Q_G$ such that $\omega_G$ is homogeneous of degree $a$ with respect to it.
\end{prop}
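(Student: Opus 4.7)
The plan is to build $g_G$ by routing the grading through the quiver $\tilde{Q}_G$, which both $Q_A$ (via $\Phi$) and $Q_G$ (via $\Psi$) map to, and then to use Theorem~\ref{thm_Psi(omegaG)cont_inPhi(omegaA)} to transfer homogeneity of $\omega_A$ to $\omega_G$.

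First, I would produce an intermediate grading on $\tilde{Q}_G$. Since $\Phi$ factors as $Q_A \twoheadrightarrow Q_A/(G/A) \xrightarrow{\tilde{\Phi}} \tilde{Q}_G$, where the first map is surjective and $\tilde{\Phi}$ is an isomorphism by Proposition~\ref{prop_Q_A/(G/A)iso-tilde(Q)_G}, the morphism $\Phi\colon Q_A \to \tilde{Q}_G$ is surjective on arrows. Combined with the hypothesis that $\Phi$ is $g_A$-gradable, this produces (by Definition~\ref{def_graded-quiver-morphisms}(2)) a unique grading $\tilde{g} := \Phi_*g_A$ on $\tilde{Q}_G$ satisfying $\tilde{g}(\Phi(x)) = g_A(x)$ for every arrow $x$ of $Q_A$.

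Next, I would pull this grading back along $\Psi$ and set $g_G := \Psi^*\tilde{g}$ on $Q_G$, as in Definition~\ref{def_graded-quiver-morphisms}(1). By construction, both $\Phi\colon (Q_A,g_A)\to(\tilde{Q}_G,\tilde{g})$ and $\Psi\colon (Q_G,g_G)\to(\tilde{Q}_G,\tilde{g})$ are then morphisms of graded quivers, so any path $\bf q$ in $Q_A$ (resp.\ $Q_G$) has the same total degree as its image $\Phi(\bf q)$ (resp.\ $\Psi(\bf q)$) in $\tilde{Q}_G$.

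Finally, I would check that $\omega_G$ is homogeneous of degree $a$. Since $\omega_G$ is a linear combination of the paths in $\supp(\omega_G)$, it suffices to show that every $\bf p\in\supp(\omega_G)$ has $g_G$-degree equal to $a$. By Theorem~\ref{thm_Psi(omegaG)cont_inPhi(omegaA)} there exists $\tilde{\bf p}\in\supp(\omega_A)$ with $\Psi(\bf p) = \Phi(\tilde{\bf p})$; then
\[
|\bf p|_{g_G} \;=\; |\Psi(\bf p)|_{\tilde{g}} \;=\; |\Phi(\tilde{\bf p})|_{\tilde{g}} \;=\; |\tilde{\bf p}|_{g_A} \;=\; a,
\]
where the outer two equalities use that $\Psi$ and $\Phi$ are graded-quiver morphisms, and the last uses that $\omega_A$ is homogeneous of degree $a$.

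There is no real obstacle here beyond the bookkeeping: the entire content of the proposition has been concentrated into Theorem~\ref{thm_Psi(omegaG)cont_inPhi(omegaA)}, which guarantees that each path supporting $\omega_G$ lifts through $\Psi,\Phi$ to a path supporting $\omega_A$. The mild point to be careful about is that we only control paths via their images in $\tilde{Q}_G$ and we only know an inclusion $\Psi(\supp\omega_G)\subseteq\Phi(\supp\omega_A)$ in general, but this one-sided inclusion is exactly what is needed for the homogeneity assertion.
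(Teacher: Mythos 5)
Your proposal is correct and matches the paper's proof essentially verbatim: the same construction $g_G=\Psi^*\Phi_*g_A$ via the intermediate quiver $\tilde{Q}_G$, using surjectivity of $\Phi$ on arrows from Proposition~\ref{prop_Q_A/(G/A)iso-tilde(Q)_G}, and the same appeal to Theorem~\ref{thm_Psi(omegaG)cont_inPhi(omegaA)} together with the chain of degree equalities. Your closing remark that only the one-sided inclusion $\Psi(\supp\omega_G)\subseteq\Phi(\supp\omega_A)$ is needed is exactly the point the paper relies on.
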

\begin{proof}
By Proposition~\ref{prop_Q_A/(G/A)iso-tilde(Q)_G} we have that $\Phi$ is surjective on arrows: this, together with the fact that $\Phi$ is $g_A$-gradable, implies that we can define the grading $\Phi_*g_A$ on $\tilde{Q}_G$ (see Definition~\ref{def_graded-quiver-morphisms}). We now define a grading on $Q_G$ by $g_G:=\Psi^*\Phi_*g_A$. Note that with these definitions both $\Phi$ and $\Psi$ become morphisms of graded quivers.

Now we must show that $\omega_G$ is homogeneous of degree $a$ with respect to $g_G$. Let $\bf p\in\supp(\omega_G)$, then it is enough to prove that $g_G(\bf p)=a$. By Theorem \ref{thm_Psi(omegaG)cont_inPhi(omegaA)} there exists $\tilde{\bf p}\in\supp(\omega_A)$ such that $\Phi(\tilde{\bf p})=\Psi(\bf p)$, hence, since $g_A(\omega_A)=a$, we must have that
$$ g_G(\bf p) = (\Psi^*\Phi_*g_A)(\bf p) = (\Phi_*g_A)(\Psi(\bf p)) = (\Phi_*g_A)(\Phi(\tilde{\bf p})) = g_A(\tilde{\bf p}) = a. $$
\end{proof}

We will see in Section~\ref{sec_cuts} how one can find in practice gradings which fit the setting of Proposition~\ref{prop_grading-A-->grading-G}.

\subsection{Metacyclic groups embedded in $\SL(s+1,\C)$}\label{sec_metacyclic-embedded-in-SL}

Our final aim will be to obtain $(s-1)$-representation infinite algebras from the McKay quiver $Q_G$: however, by Corollary~\ref{cor_superpot-homog-->GP1}, this can be done when $G$ is contained in $\SL(s,\C)$. Nevertheless, if this condition is not satisfied, we can still use the results in \S~\ref{sec_embedding_gen} and get examples of $s$-representation infinite algebras by embedding $G$ in $\SL(s+1,\C)$.

Denote by $G'$ and $A'$ the images in $\SL(s+1,\C)$ of, respectively, $G$ and $A$ under this embedding, and call $Q_{G'}$, $Q_{A'}$, $\omega_{G'}$, $\omega_{A'}$ the corresponding McKay quivers and superpotentials. We now want to show that an analogue of Proposition~\ref{prop_grading-A-->grading-G} holds in this setting.

Recall that by Proposition~\ref{prop_emb-quiver} $Q_G$ (resp. $Q_A$) is a subquiver of $Q_{G'}$ (resp. $Q_{A'}$), and the latter is obtained from the former by adding all the arrows $i \to \tau(i)$. Now consider the automorphism $\tau$ of $\tilde{Q}_G$ defined in Proposition~\ref{prop_def-tau}. We define $\tilde{Q}_{G'}$ as the quiver obtained from $\tilde{Q}_G$ by adding an arrow $i \to \tau(i)$ for each vertex $i\in (\tilde{Q}_G)_0$.

Since the morphisms $\Phi$ and $\Psi$ are compatible with $\tau$, we can naturally extend them to morphisms $\Phi'$ and $\Psi'$, so that the following diagram commutes:
\[
\begin{tikzcd}
Q_{A'} \arrow[r,"\Phi'"] & \tilde{Q}_{G'} & Q_{G'} \ar[l,swap,"\Psi'"] \\
Q_A \arrow[u,hook] \ar[r,"\Phi"] & \tilde{Q}_G \arrow[u,hook] & Q_G \arrow[u,hook,u] \ar[l,swap,"\Psi"]
\end{tikzcd}
\]

Note also that the $G/A$-action on $Q_A$ can be extended to a $G/A$-action on $Q_{A'}$, and $\tilde{Q}_{G'}$ can be thought as the quotient of $Q_{A'}$ by this action.

\begin{prop}\label{prop_grading-A'-->grading-G'}
Let $g_A$ be a grading on $Q_{A'}$ such that $\omega_{A'}$ is homogeneous of degree $a$ and the morphism $\Phi'$ is $g_A$-gradable. Then there exists a grading $g_G$ on $Q_{G'}$ such that $\omega_{G'}$ is homogeneous of degree $a$.
\end{prop}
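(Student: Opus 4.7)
The strategy is to mimic the proof of Proposition~\ref{prop_grading-A-->grading-G}, reducing the primed statement to the unprimed one via Proposition~\ref{prop_emb-quiver}. The plan is to set $g_G := (\Psi')^{*}(\Phi')_{*}g_A$, so that both $\Phi'$ and $\Psi'$ become morphisms of graded quivers, and then to verify that every path in $\supp(\omega_{G'})$ has degree $a$.

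First I would check that this definition makes sense. That $\Phi'$ is $g_A$-gradable is part of the hypothesis, so the only thing to verify is that $\Phi'$ is surjective on arrows. The restriction $\Phi' \mid_{Q_A}=\Phi$ is already surjective on $(\tilde{Q}_G)_1$ by Proposition~\ref{prop_Q_A/(G/A)iso-tilde(Q)_G}, and the extra arrows in $\tilde{Q}_{G'}$ are by construction of the form $i\to\tau(i)$ with $i\in\cal D\subseteq (Q_A)_0$, and each one is hit by the extra arrow $i\to\tau(i)$ of $Q_{A'}$; here I am using Assumption~\ref{ass_repclosed} to ensure that $\tau$ preserves $\cal D$ so that $\ul{\tau(j)}=\tau(\ul{j})$ holds for every vertex $j\in (Q_{A'})_0$. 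This also shows that the extra arrows of $\tilde{Q}_{G'}$ are compatible with both $\Phi'$ and $\Psi'$.

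Next I would show that $\omega_{G'}$ is homogeneous of degree $a$ for $g_G$. Take $\bf p\in\supp(\omega_{G'})$. Since $g_G$ is invariant under cyclic permutation of paths and, by the twisted superpotential property, cyclic permutations of $\bf p$ still lie in $\supp(\omega_{G'})$, we may assume by Proposition~\ref{prop_emb-quiver} that $\bf p = \bf q\cdot e$, where $\bf q\in\supp(\omega_G)\subseteq\C Q_{G'}$ and $e$ is the unique extra arrow $t(\bf q)\to\tau(t(\bf q))$. By Theorem~\ref{thm_Psi(omegaG)cont_inPhi(omegaA)} there is a path $\tilde{\bf q}\in\supp(\omega_A)$ with $\Phi(\tilde{\bf q})=\Psi(\bf q)$; let $j':=t(\tilde{\bf q})$, which satisfies $\ul{j'}=t(\bf q)$. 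Let $\tilde e \colon j'\to\tau(j')$ be the extra arrow in $Q_{A'}$ and set $\tilde{\bf p}:=\tilde{\bf q}\cdot\tilde e$. Applying Proposition~\ref{prop_emb-quiver} to the pair $(A,A')$, from $\tilde{\bf q}\in\supp(\omega_A)$ we conclude that $\tilde{\bf p}\in\supp(\omega_{A'})$. Moreover $\Phi'(\tilde e)$ is the extra arrow $\ul{j'}\to\tau(\ul{j'})=t(\bf q)\to\tau(t(\bf q))$, which coincides with $\Psi'(e)$, and hence $\Phi'(\tilde{\bf p})=\Psi'(\bf p)$.

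With this in place the computation is formal: one gets
\[
g_G(\bf p) = ((\Psi')^{*}(\Phi')_{*}g_A)(\bf p) = ((\Phi')_{*}g_A)(\Psi'(\bf p)) = ((\Phi')_{*}g_A)(\Phi'(\tilde{\bf p})) = g_A(\tilde{\bf p}) = a,
\]
which finishes the proof. The main conceptual obstacle is ensuring that the extra arrows really do match up through $\Phi'$ and $\Psi'$, i.e.\ that the lifting $\tilde e$ of $\Psi'(e)$ produced above indeed satisfies $\tau(\ul{j'})=\ul{\tau(j')}$ and that $\tilde{\bf q}\cdot\tilde e$ is still a path (source/target compatibility). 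Both follow from Assumption~\ref{ass_repclosed} and from the commutativity of the square involving $\tau$ established in Proposition~\ref{prop_def-tau}, extended from $\tilde{Q}_G$ to $\tilde{Q}_{G'}$ as in the discussion preceding the proposition.
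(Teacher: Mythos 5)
Your proposal is correct and follows essentially the same route as the paper: the paper also establishes $\Psi'(\supp(\omega_{G'}))\subseteq\Phi'(\supp(\omega_{A'}))$ by writing each path in $\supp(\omega_{G'})$, up to cyclic permutation, as a path in $\supp(\omega_G)$ followed by the extra arrow $t(\mathbf{p})\to\tau(t(\mathbf{p}))$, and then lets the formal computation from Proposition~\ref{prop_grading-A-->grading-G} carry over with $g_G=(\Psi')^{*}(\Phi')_{*}g_A$. You simply spell out the lifting of the extra arrow and the surjectivity of $\Phi'$ in more detail than the paper does.
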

\begin{proof}
Note that $\Psi'(\supp(\omega_{G'})) \subseteq \Phi'(\supp(\omega_{A'}))$: indeed, every path in the support of $\omega_{G'}$ is, up to cyclic permutation, in the form $a_{\bf p} \cdot \bf p$ for a path $\bf p\in\supp(\omega_G)$, where $a_{\bf p}$ is the arrow $t(\bf p) \to \tau(t(\bf p))$. Hence it is enough to show that $a_{\bf p}$ is in the image of $\Phi'$, but this is true because both $\Phi$ and $\Psi$ are compatible with the twists.

Now it is easy to check that the same proof of Proposition~\ref{prop_grading-A-->grading-G} carries over if we just replace $Q_A$, $Q_G$, $\omega_A$, $\omega_G$, $\Phi$, $\Psi$ respectively by $Q_{A'}$, $Q_{G'}$, $\omega_{A'}$, $\omega_{G'}$, $\Phi'$, $\Psi'$.
\end{proof}

\section{Cuts}\label{sec_cuts}

In this section we will illustrate a method to define explicitly some gradings on the McKay quivers we studied so far. For this purpose, we will first describe $Q_A$ and $Q_{A'}$ using a construction of \cite{HIO}. In order to encompass both the cases of $A$ and $A'$, we will first do this in a more general setting.

Fix an integer $N\geq 2$. Let $\{e_0,\dots,e_{N-1}\}$ be the canonical basis of $\R^N$ and let $$E=\{(x_0,\dots,x_{N-1})\in\R^N \,|\, \sum_{i=0}^{N-1}x_i=0\}.$$ The set $\{e_i-e_j \in E \,|\, 0\leq i\neq j\leq N-1\}$ is a root system of type $\mathrm{A}_{N-1}$ (see for example \cite{H}). We take as simple roots $\alpha_i:=e_i-e_{i-1}$ for $i=1,\dots,N-1$, and we set $\alpha_0=\alpha_N:=e_0-e_{N-1}=-\sum_{i=1}^{N-1}\alpha_i$. Define the root lattice $L$ as the lattice in $E$ generated by the simple roots.

We define a quiver $Q=Q^{(N)}$ as follows. Its vertices are $Q_0:=L$; for each vertex $v\in Q_0$ and each $k=0,\dots,N-1$, we have an arrow $a_k^v\colon v \to v+\alpha_k$. Sometimes, when this does not cause any confusion, we will drop the superscript and write $a_k$ in place of $a_k^v$.

Let $m,r_1,\dots,r_N$ be positive integers such that $(r_i,m)=1$ for all $i$. Let $H$ be the subgroup of $\SL(N,\C)$ generated by the matrix
\[
\left(
\begin{array}{cccc}
\epsilon_m^{r_1} & 0 & \cdots & 0 \\
0 & \epsilon_m^{r_2} & \cdot & 0 \\
\vdots & \vdots & \ddots & \vdots \\
0 & 0 & \cdots & \epsilon_m^{r_N}
\end{array}\right).
\]
It is easy to see that $H$ is cyclic of order $m$. Let $\eta\colon L \to \Z/m\Z$ be the homomorphism of abelian groups defined by $\eta(\alpha_j)=-r_j$ for all $j=1,\dots,N$. It is clearly surjective because each $r_j$ is invertible modulo $m$: hence it induces an isomorphism (which we call again $\eta$)
$$\eta\colon L/B \xrightarrow{\sim} \Z/m\Z,$$
where $B:=\ker(\eta)$.

The subgroup $B\leq L$ acts on $L$ by translations and this extends naturally to an action on the quiver $Q$, so we can form the orbit quiver $Q/B$. We will denote by $\ol a_k^{\ol v}\colon \ol v \to \ol v+\ol\alpha_k$ the arrow in $Q/B$ corresponding to the orbit of $a_k^v$.

Now consider the McKay quiver $Q_H$. It has vertices $(Q_H)_0:=\Z/m\Z$ and an arrow $x_k^i\colon i \to i-r_k$ for all $i\in(Q_H)_0$, $k=1,\dots,N$. This follows from \cite[Corollary~4.1]{BSW}; alternatively, it can be proved in the same way of Proposition~\ref{prop_quiver_A}.

\begin{prop}\label{prop_Q/B-iso-Q_H}
The map $\eta$ extends to an isomorphism of quivers $\eta\colon Q/B \to Q_H$, which is given on arrows by $\eta(\ol a_k^{\ol v})=x_k^{\eta(\ol v)}$. Moreover the McKay relations in $Q_H$ described in Proposition~\ref{prop_quiver_A} correspond to the relations
$$ \{\ol a_h^{\ol v+\ol\alpha_k} \ol a_k^{\ol v} = \ol a_k^{\ol v+\ol\alpha_h} \ol a_h^{\ol v} \,|\, \ol v\in L/B, \; 0 \leq k,h \leq N-1\}$$
in $Q/B$.
\end{prop}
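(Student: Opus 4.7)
The plan is to verify three things in turn: that the stated formula for $\eta$ on arrows is well defined, that it yields a bijection of quivers together with the vertex isomorphism, and that it identifies the two families of relations.

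First I would check well-definedness. The translation action of $B$ on $Q$ sends $a_k^v\colon v\to v+\alpha_k$ to $a_k^{v+b}\colon v+b\to v+b+\alpha_k$ for every $b\in B$, so a $B$-orbit of arrows is determined by the index $k$ together with the orbit $\ol v$ of its source, justifying the notation $\ol a_k^{\ol v}$ and making the assignment $\ol a_k^{\ol v}\mapsto x_k^{\eta(\ol v)}$ depend only on $k$ and $\ol v$. On vertices $\eta$ is already an isomorphism by construction. For compatibility with the arrow structure, the source of $x_k^{\eta(\ol v)}$ is $\eta(\ol v)$, and the target is $\eta(\ol v+\ol\alpha_k)=\eta(\ol v)+\eta(\ol\alpha_k)=\eta(\ol v)-r_k$, which matches the target of the arrow $x_k^{\eta(\ol v)}$ in $Q_H$.

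Next I would argue bijectivity. Each vertex of $Q/B$ has exactly $N$ outgoing arrows $\ol a_k^{\ol v}$, pairwise distinct because the $B$-action preserves the index $k$; correspondingly every vertex of $Q_H$ has exactly $N$ outgoing arrows indexed by $k$. Since the vertex map is already bijective and $\eta$ sends the $k$-th outgoing arrow at $\ol v$ to the $k$-th outgoing arrow at $\eta(\ol v)$, the induced map on arrows is a bijection, so $\eta$ is an isomorphism of quivers.

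For the correspondence of relations I would simply apply $\eta$ to a generic instance: the path $\ol a_h^{\ol v+\ol\alpha_k}\ol a_k^{\ol v}$ maps to $x_h^{\eta(\ol v)-r_k}x_k^{\eta(\ol v)}$ and $\ol a_k^{\ol v+\ol\alpha_h}\ol a_h^{\ol v}$ to $x_k^{\eta(\ol v)-r_h}x_h^{\eta(\ol v)}$, so the stated relation in $Q/B$ is precisely the McKay commutation at the vertex $\eta(\ol v)$ for the pair $(k,h)$. Letting $\ol v$ range over $L/B$ and using surjectivity of $\eta$ on vertices produces every McKay relation of the form in Proposition~\ref{prop_quiver_A} and nothing else. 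No deep obstacle appears; the only point requiring care is the bookkeeping between the indexing conventions, in particular that the identification $\alpha_0=\alpha_N$ together with $\eta(\alpha_0)=-r_N$ is consistent, which reduces to the trace condition $\sum_{j=1}^N r_j\equiv 0\pmod m$ coming from $H\subset\SL(N,\C)$.
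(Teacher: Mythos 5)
Your proof is correct and follows the same route as the paper, which simply records that the statement is an immediate consequence of Proposition~\ref{prop_quiver_A}; your write-up just makes the routine verifications (well-definedness on $B$-orbits, bijectivity, transport of relations) explicit. One cosmetic remark: the congruence $\sum_{j=1}^N r_j\equiv 0\pmod m$ that makes $\eta(\alpha_0)=-r_N$ consistent is the determinant condition for $H\subseteq\SL(N,\C)$, not a trace condition.
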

\begin{proof}
This follows immediately from Proposition~\ref{prop_quiver_A}.
\end{proof}

\begin{figure}
\centering
\begin{tikzpicture}[->,scale=0.8]
\begin{scope}[rotate=-15,inner sep=1.5mm]
\begin{scriptsize}

\def\b{1}							
\def\n{7}

\pgfmathtruncatemacro{\m}{3*\n}
\pgfmathtruncatemacro{\r}{3*\b+1}

\coordinate (A1) at (1,0,-1);
\coordinate (A2) at (-1,1,0);
\coordinate (A0) at (0,-1,1);

\node (beta0) at (0,0,0){};								
\node (beta1) at ($-\r*(A1) + (A2)$){};
\node (beta2) at ($\r*(A2) - (A0)$){};
\node (beta3) at ($(beta2) - (beta1)$){};

\filldraw[very nearly transparent,-] ($(beta0)$) -- ($(beta1)$) -- ($(beta2)$) -- ($(beta3)$);

\pgfmathtruncatemacro{\minx}{-\r-1}			
\pgfmathtruncatemacro{\maxx}{\r-2}
\pgfmathtruncatemacro{\miny}{-\b}
\pgfmathtruncatemacro{\maxy}{\r+1}

	\pgfmathtruncatemacro{\i}{Mod(-\minx+1+(-\maxy-1)*(\r+1),\m)}
	\node (D) at (\minx-1,\maxy+1,-\minx-\maxy) {$\i$};	
\foreach \y in {\miny,...,\maxy}
{
	\pgfmathtruncatemacro{\i}{Mod(-\minx+1+(-\y)*(\r+1),\m)}	
	\node (D) at (\minx-1,\y,-\minx-\y+1) {$\i$};		
	\node (P) at (\maxx,\y,-\maxx-\y) {};
	\node (Q) at (\maxx,\y+1,-\maxx-\y-1) {};
 \draw (Q) -- (P);												
}

\foreach \x in {\minx,...,\maxx}
{
\pgfmathtruncatemacro{\i}{Mod(-\x+(-\maxy-1)*(\r+1),\m)}	
\node (M) at (\x,\maxy+1,-\x-\maxy-1)  {$\i$};
\node (N) at (\x-1,\maxy+1,-\x-\maxy)  {};
 \draw (N) -- (M);											
	\foreach \y in {\miny,...,\maxy}
{
	\pgfmathtruncatemacro{\i}{Mod(-\x+(-\y)*(\r+1),\m)}
	\node (A) at (\x,\y,-\x-\y)  {$\i$};		
	\node (B) at (\x-1,\y+1,-\x-\y) {};
	\node (C) at (\x-1,\y,-\x-\y+1) {};
	  \draw (A) -- (B);
	  \draw (C) -- (A);
	  \draw (B) -- (C);
}
}
\end{scriptsize}
\end{scope}
\end{tikzpicture}
\caption{A part of the infinite quiver $Q^{(3)}$. Each vertex is labelled with its image under $\eta$, where we set $m=21$, $r_1=1$, $r_2=4$, $r_3=16$. The McKay quiver $Q_H$ is obtained by taking the vertices in the shadowed parallelogram and identifying the upper side with the lower one and the left side with the right one (note that in this way $Q_H$ can be naturally embedded in a real $2$-dimensional torus).}\label{fig_quiver-s=3}
\end{figure}
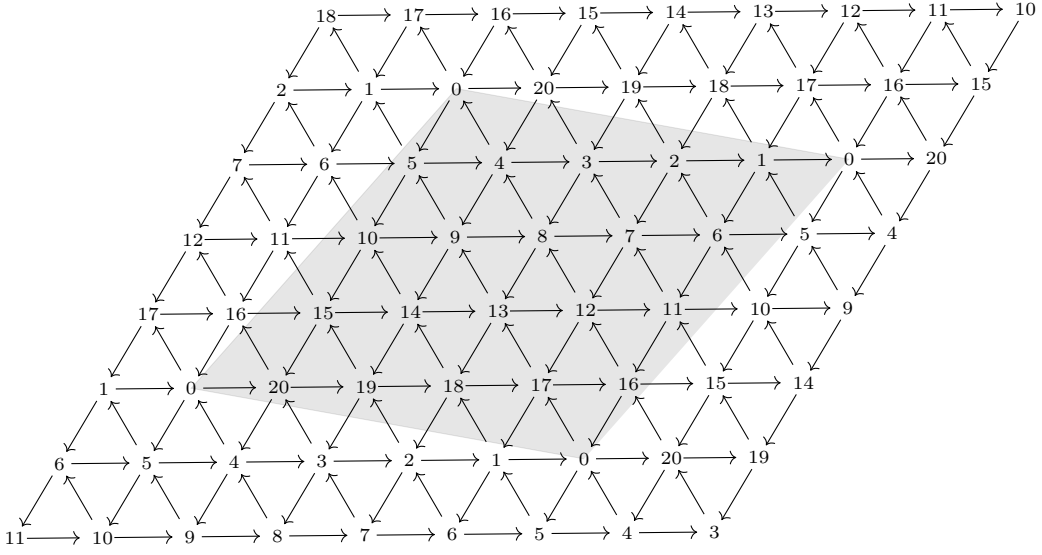

\begin{defn}
A subset $C$ of the arrows of $Q$ is called a \emph{cut} if every path of the form
$$a_{\sigma(0)}\cdots a_{\sigma(N-1)}\colon v\to v,$$
for a permutation $\sigma\in\mathfrak{S}_N$, contains exactly one arrow of $C$.

Given a cut $C$, we define a grading $g_C$ on $Q$ by setting 
\[
g_C(a) = \left\{ \begin{array}{cc}
1 & \text{ if $a\in C$,}\\
0 & \text{ otherwise.}
\end{array} \right.
\]
\end{defn}

We have the following result (cf. \cite[Theorem~5.6]{HIO}).

\begin{prop}\label{prop_cut-B-inv-->H-CYGP1}
Let $C$ be a cut on $Q$ which is invariant under the action of $B$. Then $g_C$ induces a grading on $Q_H$ such that the skew group algebra of $H$ becomes $N$-bimodule Calabi-Yau of Gorenstein parameter $1$.
\end{prop}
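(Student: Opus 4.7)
The plan is to combine three ingredients already available in the paper: the identification $Q/B\cong Q_H$ from Proposition~\ref{prop_Q/B-iso-Q_H}, the explicit form of the superpotential of $Q_H$ (which coincides with the one in Proposition~\ref{prop_quiver_A} applied to $H$), and Corollary~\ref{cor_superpot-homog-->GP1}. The only nontrivial point is to verify that $g_C$ descends along the quotient map $\pi\colon Q\to Q/B$ to a grading on $Q_H$ with respect to which the superpotential $\omega_H$ is homogeneous of degree~$1$.

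First I would observe that the $B$-invariance of $C$ is exactly the statement that $\pi$ is $g_C$-gradable in the sense of Definition~\ref{def_graded-quiver-morphisms}(2): the preimage under $\pi$ of each arrow of $Q/B$ is a $B$-orbit of arrows of $Q$, which by assumption either lies entirely in $C$ or is disjoint from $C$, so all its elements have the same $g_C$-value. Since $\pi$ is clearly surjective on arrows, this produces a well-defined grading $\pi_*g_C$ on $Q/B$, which via the isomorphism of Proposition~\ref{prop_Q/B-iso-Q_H} transports to a grading on $Q_H$; by an abuse of notation I will still denote it $g_C$.

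Next I would check that $\omega_H$ is homogeneous of degree $1$ for this grading. By the analogue of Proposition~\ref{prop_quiver_A} for $H$,
\[
\omega_H=\sum_{i\in(Q_H)_0}\sum_{\sigma\in\mathfrak{S}_N}(-1)^\sigma\, x_{\sigma(N-1)}\cdots x_{\sigma(0)},
\]
so it suffices to show that each path $\bf p_\sigma^i=x_{\sigma(N-1)}\cdots x_{\sigma(0)}$ has degree $1$. Lifting $i$ to any vertex $v\in L$ with $\eta(\ol v)=i$, the isomorphism of Proposition~\ref{prop_Q/B-iso-Q_H} identifies $\bf p_\sigma^i$ with (the $B$-orbit of) the path $a_{\sigma(N-1)}^{v+\alpha_{\sigma(0)}+\cdots+\alpha_{\sigma(N-2)}}\cdots a_{\sigma(0)}^v$ in $Q$, which is a cycle at $v$ of the form appearing in the definition of a cut. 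Hence exactly one of its arrows lies in $C$, so $g_C(\bf p_\sigma^i)=1$. Therefore $\omega_H$ is homogeneous of degree $1$.

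Finally, since $H\le\SL(N,\C)$ by hypothesis, Corollary~\ref{cor_superpot-homog-->GP1} applied to $Q_H$ with the grading $g_C$ yields that $\Pi_H$ is bimodule $N$-Calabi-Yau of Gorenstein parameter $1$, and the conclusion for $\C[V]*H$ follows because this property is invariant under Morita equivalence. I do not expect any serious obstacle; the whole argument is essentially combinatorial bookkeeping, and the only place where one has to be careful is matching the lifted cycle in $Q$ with the cycle $\bf p_\sigma^i$ in $Q_H$, which is guaranteed by Proposition~\ref{prop_Q/B-iso-Q_H}.
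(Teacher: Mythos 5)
Your argument is correct and follows essentially the same route as the paper's proof: descend $g_C$ along the $B$-quotient using gradability, transport it to $Q_H$ via the isomorphism of Proposition~\ref{prop_Q/B-iso-Q_H}, observe that the cut condition makes $\omega_H$ homogeneous of degree $1$, and conclude with Corollary~\ref{cor_superpot-homog-->GP1}. You merely spell out the cycle-lifting step that the paper leaves as "clear".
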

\begin{proof}
The projection morphism $Q\to Q/B$ is clearly surjective on arrows, and it is $g_C$-gradable because $C$ is $B$-invariant. Hence it induces a grading on $Q/B$, and in turn one on $Q_H$ via $\eta$: we denote the latter by $g_C^H$. It is clear, by Propositions~\ref{prop_quiver_A} and \ref{prop_Q/B-iso-Q_H}, that the fact that $C$ is a cut implies that the superpotential $\omega_H$ of $Q_H$ is homogeneous of degree $1$. Hence the statement follows by Corollary~\ref{cor_superpot-homog-->GP1}.
\end{proof}

We will now apply this construction to the setting of Section~\ref{sec_sga-of-metacyclic-groups}, in order to get an analogue of Proposition~\ref{prop_cut-B-inv-->H-CYGP1} for metacyclic groups.

Let $G$ be the metacyclic group associated to some integers $m,r,s,t$. From now on we will assume that all the conditions \ref{item_(m,r)=1},\dots,\ref{item_r=1(s)} hold.

In the following we will treat separately the cases where $G\subseteq \SL(s,\C)$ and $G\not\subseteq \SL(s,\C)$, and we will refer to them respectively as (SL) and (GL). Depending on the case, the objects we introduced at the beginning of this section will assume the following values:
\begin{itemize}
\item[(SL)] $H=A$, $N=s$, $r_i=r^{i-1}$ for all $i=1,\dots,s$;
\item[(GL)] $H=A'$ (according to the notation of Section~\ref{sec_metacyclic-embedded-in-SL}), $N=s+1$, $r_i=r^{i-1}$ for all $i=1,\dots,s$, $r_{s+1}=-c$.
\end{itemize}

Before going on we first give an example of a cut in $Q$ which will be important in the following in both the above cases.

\subsection{An example of cut}

We keep the notation introduced previously, but from now on we will consider it only in the cases (SL) and (GL).

Let $l$ be a positive integer. For $k\in\{1,\dots,l\}$, define $\gamma=\gamma_k\colon L \to \Z/sl\Z$ as the group homomorphism given by $\gamma(\alpha_i)=k$ for all $i=1,\dots,N-1$. Note that in this way we have $\gamma(\alpha_0)=-(N-1)k$.

If $x\in\Z/sl\Z$, we denote by $\ol{x}$ the unique representative of $x$ in $\{0,\dots,sl-1\}$. 

\begin{defn}\label{def_cut}
Given $k\in\{1,\dots,l\}$, we define the following subset of $Q_1$:
$$ C_k^{(l)} = C_k := \left\{ a_i\colon v \to v+\alpha_i \,|\, \ol{\gamma(v)} \geq \ol{\gamma(v+\alpha_i)},\, 0\leq i \leq N-1 \right\}. $$
\end{defn}

\begin{prop}\label{prop_example-cut}
Let $l\geq 1$, $1\leq k \leq l$.
\begin{enumerate}[label=(\alph*)]
\item In case (SL), $C_k$ is a cut in $Q$ for all $k$. In case (GL), $C_k$ is a cut in $Q$ for all $k<l$.
\item Every path in $Q$ of length greater or equal than $sl$ contains at least an arrow of $C_k$.
\item Suppose that $l$ divides both the integers $n$ and $b$ defined in conditions \ref{item_m=sn} and \ref{item_r=1(s)}. Then, for all $k=1,\dots,l$, $C_k$ is invariant under the action of $B$.
\end{enumerate}
\end{prop}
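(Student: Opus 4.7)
My approach is to view $\gamma\colon L \to \mathbb{Z}/sl\mathbb{Z}$ as a ``height'' function on the vertices of $Q$ and to track how the canonical representative $\overline{\gamma(v)} \in \{0,1,\dots,sl-1\}$ evolves along paths. Writing $\delta_i \in [0,sl)$ for the non-negative representative of $\gamma(\alpha_i)$, any arrow $a_i\colon v \to v+\alpha_i$ satisfies
\[
\overline{\gamma(v+\alpha_i)} = \overline{\gamma(v)} + \delta_i - \epsilon \cdot sl
\]
for a unique $\epsilon \in \{0,1\}$, and a short case check shows that the arrow lies in $C_k$ if and only if either $\epsilon = 1$ (an honest wrap-around) or $\delta_i = 0$ (forced equality of the representatives).

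For part (a), I would fix a cycle $a_{\sigma(0)}\cdots a_{\sigma(N-1)}\colon v\to v$ and sum the displayed telescoping identity over its arrows, yielding $\sum_j \delta_{\sigma(j)} = sl\cdot\sum_j \epsilon_j$. A direct computation gives $\delta_i = k$ for $i\neq 0$, while $\delta_0 = sl - (s-1)k$ in case (SL) and $\delta_0 = s(l-k)$ in case (GL). Hence in case (SL) for every $k\in\{1,\dots,l\}$ and in case (GL) for $k<l$, all the $\delta_{\sigma(j)}$ are strictly positive and their sum equals $sl$, forcing $\sum_j \epsilon_j = 1$; so each cycle contains exactly one arrow of $C_k$ and $C_k$ is a cut. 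In the excluded case (GL) with $k = l$ one has $\delta_0 = 0$, so the $\alpha_0$-arrow automatically belongs to $C_l$ by the equality clause while the remaining $s$ arrows still produce exactly one wrap-around, giving two arrows of $C_l$ per cycle.

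Part (b) is an immediate pigeonhole: a path of length at least $sl$ has at least $sl+1$ vertices, so two of them share the same $\overline\gamma$-value, and since any arrow outside $C_k$ strictly increases $\overline\gamma$, the sub-path between two repeating vertices must contain an arrow of $C_k$. For part (c), the hypotheses $l\mid n$ and $l\mid b$ give $sl\mid sn = m$ and $r-1 = sb \equiv 0 \pmod{sl}$, so $r^{i-1}\equiv 1\pmod{sl}$ for every $i$. Writing any $w = \sum_i c_i \alpha_i \in B$, the defining relation $\sum_i c_i r^{i-1}\equiv 0\pmod m$ reduces modulo $sl$ to $\sum_i c_i \equiv 0\pmod{sl}$, whence $\gamma(w) = k\sum_i c_i \equiv 0\pmod{sl}$. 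Consequently $\overline{\gamma(v+w)} = \overline{\gamma(v)}$ for every $v$, so translation by $w$ preserves the defining inequality of $C_k$, giving the $B$-invariance. The main delicate point is the case analysis in (a), and specifically the identification of exactly when $\delta_0 = 0$ can occur: this equality clause for the $\alpha_0$-arrow is precisely the obstruction that forces the restriction $k<l$ in case (GL).
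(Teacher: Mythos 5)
Your proof is correct and follows essentially the same route as the paper: all three parts rest on tracking the representative $\overline{\gamma(v)}\in\{0,\dots,sl-1\}$ along arrows, with your arguments for (b) and (c) matching the paper's in substance. In part (a) your telescoping identity $\sum_j\delta_{\sigma(j)}=sl\cdot\sum_j\epsilon_j$ (together with the computation $\sum_i\delta_i=sl$ and the observation that $\delta_0=0$ exactly in the excluded case (GL), $k=l$) is a tidier piece of bookkeeping than the paper's argument, which normalizes the cycle so that the $\alpha_0$-arrow comes first and splits into cases according to whether that arrow lies in $C_k$, but the underlying mechanism --- counting wrap-arounds of $\overline{\gamma}$ around the cycle --- is identical.
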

\begin{proof}
(a) Suppose we have a path $a_{i_0}\cdots a_{i_{N-1}}\colon v\to v$ in $Q$ with $\{i_0,\dots,i_{N-1}\}=\{0,\dots,N-1\}$: we have to show that there exist exactly one $j$ such that $a_{i_j}\in C_k$. Up to a cyclic permutation, we can assume that $i_0=0$. If $N=s$, we have $(N-1)k=sk-k<sl$; if $N=s+1$ then by hypothesis $k<l$, and so $(N-1)k=sk<sl$. In both cases $(N-1)k<sl$, so, since $\gamma(v+\alpha_0)=\gamma(v)-(N-1)k$, we have that $a_{i_0}\in C_k$ if and only if $\ol{\gamma(v)}-(N-1)k \geq 0$. Now put $v_j := v+\sum_{h=0}^j \alpha_{i_h}$ and note that, for $j=1,\dots,N-1$, we have $a_{i_j}\colon v_{j-1} \to v_j$ and $\gamma(v_{j})=\gamma(v_{j-1})+k$. Hence $a_{i_j}\not\in C_k$ if and only if $\ol{\gamma(v_{j})} = \ol{\gamma(v_{j-1})}+k$, and $a_{i_j}\in C_k$ if and only if $\ol{\gamma(v_{j-1})}+k \geq sl$.

Now suppose that $a_{i_0}\in C_k$. By the above discussion we have that $\ol{\gamma(v_0)} = \ol{\gamma(v)}-(N-1)k \geq 0$, so, if $1\leq j\leq N-1$, then $0<\ol{\gamma(v_0)}+jk = \ol{\gamma(v)}-(N-1-j)k \leq \ol{\gamma(v)}<sl$ and thus $\ol{\gamma(v_j)} = \ol{\gamma(v_0)+jk} = \ol{\ol{\gamma(v_0)}+jk} = \ol{\gamma(v_0)}+jk$. This means that, for all $j=1,\dots,N-1$, $\ol{\gamma(v_{j})} = \ol{\gamma(v_{j-1})}+k$ and hence $a_{i_j}\not\in C_k$.

Assume now that $a_{i_0}\not\in C_k$, so that we have $\ol{\gamma(v)}-(N-1)k < 0$. Suppose by absurd that no $a_{i_j}$ is in $C_k$. Then we must have $\ol{\gamma(v_{N-1})} = \ol{\gamma(v_0)}+(N-1)k$, but $v_{N-1}=v$ and so $\ol{\gamma(v_0)}+(N-1)k = \ol{\gamma(v_{N-1})} = \ol{\gamma(v)} < (N-1)k$, which is a contradiction because $\ol{\gamma(v_0)}\geq 0$. Hence there exist a $j\geq 1$ such that $a_{i_j}$ is in $C_k$: by an argument similar to above, it can be easily proved that such a $j$ must be unique.

(b) Consider a path $p=a_{i_1}\cdots a_{i_h}$ in $Q$. If no arrow in $p$ is contained in $C_k$, then we must have a chain of inequalities $\ol{\gamma(v)} < \ol{\gamma(v+\alpha_{i_1})} < \dots < \ol{\gamma(v+\sum_{j=1}^h \alpha_{i_j})}$ in $\{0,\dots,sl-1\}$, and this is clearly impossible if $h\geq sl$.

(c) Suppose that $v=\sum_{j=1}^{N-1} \mu_j\alpha_j$ is in $B$: this means that $\eta(v)=-\sum_{j=1}^{N-1} \mu_j r^{j-1} \equiv 0 \pmod m$. Since $l|n$, we have $sl|sn=m$ and so we get $\sum_{j=1}^{N-1} \mu_j r^{j-1} \equiv 0 \pmod{sl}$. Moreover we have that $sl|sb=r-1$, so $r \equiv 1 \pmod{sl}$ and thus $\sum_{j=1}^{N-1} \mu_j r^{j-1} \equiv \sum_{j=1}^{N-1} \mu_j \equiv 0 \pmod{sl}$. Hence we get $\gamma(v)=k\sum_{j=1}^{N-1} \mu_j=0$, which clearly implies that $C_k$ is $B$-invariant.
\end{proof}

\subsection{The case $G\subseteq\SL(s,\C)$}\label{sec_caseSL}

Recall from Section~\ref{sec_twist} that the determinants of the generators of $G$ are $\det(\alpha)=\epsilon_m^c$, where $c=\sum_{i=0}^{s-1} r^i$, and $\det(\beta)=(-1)^{s-1}\epsilon_m^t$. Hence, if $G\subseteq\SL(s,\C)$, we have that
\[c \equiv 0 \pmod m, \qquad
t= \left\{\begin{array}{cc}
n & \text{ if } s=2,\\
0 & \text{ if } s>2.
\end{array} \right.
\]

Clearly we have that $A$ is contained in $\SL(s,\C)$, too. The vertices of the quiver $Q = Q^{(s)}$ are the points of a root lattice of type $\mathrm  A_{s-1}$, with basis given by $\alpha_1,\dots,\alpha_{s-1}$. We have an isomorphism of quivers $\eta\colon Q/B \to Q_A$, where $B$ is the kernel of the group homomorphism $\eta\colon L\to \Z/m\Z$ given $\eta(\alpha_j)=-r^{j-1}$ for all $j=1,\dots,s-1$. Note that, since $c \equiv 0 \pmod m$, we have $\eta(\alpha_0)=\eta(\alpha_s)=-r^{s-1}$.

Recall that the action of $G/A$ on $A$ induces an automorphism $\varphi$ of $Q_A$, which is given on vertices by $\varphi(i)=ri$. Identifying $Q/B$ with $Q_A$ via $\eta$, this induces an automorphism of $Q/B$ given by $\varphi(\ol \alpha_j)=\ol \alpha_{j+1}$, $0\leq j\leq s-1$. Moreover it is easy to check that this automorphism lifts to an action on $Q$ defined by $\varphi(\alpha_j)=\alpha_{j+1}$.

\begin{thm}\label{thm_grading-invariant-cut}
Let $C$ be a cut in $Q$ which is invariant under both the actions of $B$ and $G/A$, and let $g_C$ be the grading on $Q$ induced by $C$. Then there exists a grading on $Q_G$ such that $\Pi_G$ is $s$-Calabi-Yau of Gorenstein parameter $1$ with the grading induced by it.

In particular, if the degree $0$ part $(\Pi_{G'})_0$ of $\Pi_{G'}$ is finite dimensional, then $(\Pi_{G'})_0$ is $s$-representation infinite.
\end{thm}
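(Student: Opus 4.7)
The plan is to chain together the descent and lift constructions built up in the previous sections. First, I would use the $B$-invariance of $C$ to push $g_C$ down to a grading $g_A$ on $Q/B$, and hence on $Q_A$ via the isomorphism $\eta\colon Q/B\to Q_A$ of Proposition~\ref{prop_Q/B-iso-Q_H}. Because $C$ is a cut, each of the cyclic paths $\mathbf{p}_\sigma^i$ of length $s$ appearing in the expression of $\omega_A$ from Proposition~\ref{prop_quiver_A} contains exactly one arrow of $C$, so $\omega_A$ will be homogeneous of degree $1$ with respect to $g_A$.

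Second, I would verify that the quiver morphism $\Phi\colon Q_A\to\tilde Q_G$ is $g_A$-gradable. By Proposition~\ref{prop_Q_A/(G/A)iso-tilde(Q)_G}, $\Phi$ factors through the quotient map $Q_A\to Q_A/(G/A)$ and induces an isomorphism $\tilde\Phi\colon Q_A/(G/A)\to\tilde Q_G$, so the fibres of $\Phi$ over arrows are exactly the $G/A$-orbits in $(Q_A)_1$. Hence $g_A$-gradability of $\Phi$ amounts to $g_A$ being constant on these orbits. This should follow because the $G/A$-action on $Q_A$ lifts, via $\eta$, to the action $\varphi(\alpha_j)=\alpha_{j+1}$ on $Q$, under which $C$ is invariant by hypothesis; I would spell out this compatibility explicitly from the definitions of $\eta$ and $\varphi$ so that $G/A$-invariance of $C$ on $Q$ really translates into $G/A$-invariance of $g_A$ on $Q_A$.

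With these two ingredients in place, I would invoke Proposition~\ref{prop_grading-A-->grading-G} to produce a grading $g_G$ on $Q_G$ for which $\omega_G$ is homogeneous of degree $1$. Since we are in the case $G\subseteq\SL(s,\C)$, Corollary~\ref{cor_superpot-homog-->GP1} then applies and gives that $\Pi_G$, equipped with the induced grading, is bimodule $s$-Calabi-Yau of Gorenstein parameter $1$. The ``in particular'' statement then follows at once from Theorem~\ref{thm_n-RI_CYGP1_intro}: the degree $0$ part of such a graded algebra, when finite dimensional, is automatically $(s-1)$-representation infinite.

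I do not expect a single hard step here; the substantive work has already been absorbed into Theorem~\ref{thm_Psi(omegaG)cont_inPhi(omegaA)}, which reduces questions about $\omega_G$ to questions about $\omega_A$, and into Proposition~\ref{prop_grading-A-->grading-G}, which packages the lift from $Q_A$ to $Q_G$. The only point that genuinely requires attention is the second step above, namely the bookkeeping that confirms the two invariances of $C$ on $Q$ combine to give exactly what is needed of $g_A$ on $Q_A$; once that is verified the remainder of the argument is a direct application of the cited results.
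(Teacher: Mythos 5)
Your proposal is correct and follows essentially the same route as the paper: descend $g_C$ to $Q_A$ via Proposition~\ref{prop_cut-B-inv-->H-CYGP1}, use $G/A$-invariance to see that $\Phi$ is gradable, then apply Proposition~\ref{prop_grading-A-->grading-G} and Corollary~\ref{cor_superpot-homog-->GP1}. The only extra content you supply is spelling out why the fibres of $\Phi$ over arrows are the $G/A$-orbits (via Proposition~\ref{prop_Q_A/(G/A)iso-tilde(Q)_G}), which the paper leaves implicit, and you correctly read the ``in particular'' clause as yielding an $(s-1)$-representation infinite algebra $(\Pi_G)_0$ in this (SL) case.
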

\begin{proof}
Since $C$ is $B$-invariant, by Proposition~\ref{prop_cut-B-inv-->H-CYGP1} it induces a grading $g_C^A$ on $Q_A$ such that the superpotential $\omega_A$ becomes homogeneous of degree $1$. Moreover, the fact that $C$ is $G/A$ invariant implies that the morphism $\Phi\colon Q_A\to \tilde{Q}_G$ is $g_C^A$-gradable: hence, by Proposition~\ref{prop_grading-A-->grading-G}, there exists a grading $g_C^G$ on $Q_G$ such that $\omega_G$ is homogeneous of degree $1$. So, applying Corollary~\ref{cor_superpot-homog-->GP1}, the result follows.
\end{proof}

\begin{cor}\label{cor_grading-invariant-cut}
Let $l$ be a positive integer which divides both $n$ and $b$ and let $k\in\{1,\dots,l\}$. Then the cut $C_k^{(l)}$ of Definition~\ref{def_cut} induces a grading on $\Pi_G$ such that $(\Pi_G)_0$ is $(s-1)$-representation infinite.
\end{cor}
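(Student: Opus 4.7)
The plan is to verify the hypotheses of Theorem~\ref{thm_grading-invariant-cut} for the cut $C = C_k^{(l)}$. By Proposition~\ref{prop_example-cut}(a), $C_k^{(l)}$ is a cut on $Q$ (we are in case (SL), so no restriction on $k$), and by Proposition~\ref{prop_example-cut}(c) the divisibility $l \mid n$, $l \mid b$ makes $C_k^{(l)}$ invariant under $B$. What remains is to check the $G/A$-invariance of $C_k^{(l)}$.

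For this I would first observe that the hypotheses in fact force $l = 1$: from $l \mid b$ we get $sl \mid sb = r - 1$, so $r \equiv 1 \pmod{sl}$, and therefore $c = 1 + r + \cdots + r^{s-1} \equiv s \pmod{sl}$. On the other hand, being in case (SL) means $c \equiv 0 \pmod{m}$, and since $sl \mid sn = m$ also $c \equiv 0 \pmod{sl}$. Hence $s \equiv 0 \pmod{sl}$, which forces $l = 1$. With $l = k = 1$, the homomorphism $\gamma = \gamma_1 \colon L \to \Z/s\Z$ sends $\alpha_j \mapsto 1$ for $j = 1, \dots, s-1$ and sends $\alpha_0 = -\sum_{j=1}^{s-1}\alpha_j$ to $-(s-1) \equiv 1 \pmod{s}$, so $\gamma$ takes the same value on every generator of $L$. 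Since the $G/A$-action $\varphi$ cyclically permutes the $\alpha_j$'s, $\gamma \circ \varphi = \gamma$, and the cut condition $\ol{\gamma(v)} \geq \ol{\gamma(v + \alpha_i)}$ is preserved under $\varphi$; thus $C_1$ is $G/A$-invariant.

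Applying Theorem~\ref{thm_grading-invariant-cut} then yields a grading on $Q_G$ making $\Pi_G$ bimodule $s$-Calabi-Yau of Gorenstein parameter $1$. To deduce the $(s-1)$-representation infinite property via Theorem~\ref{thm_n-RI_CYGP1}, I need finite dimensionality of $(\Pi_G)_0$. By Proposition~\ref{prop_example-cut}(b), every path of length $\geq sl = s$ in $Q$ contains a cut arrow; this passes to $Q_G$ because the grading on $Q_G$ is obtained by pulling back the grading on $Q_A$ along $\Phi$ and $\Psi$, so that any path of length $\geq s$ in $Q_G$ lifts through $\Psi$ and $\Phi$ to a path of the same length in $Q_A$, which has positive degree. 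Hence $(\Pi_G)_0$ is spanned by classes of paths of length $< s$ in $Q_G$ and is finite dimensional.

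The main obstacle I anticipate is the $G/A$-invariance: a direct attempt shows that for $l > 1$, the function $\gamma_k$ is generally not $\varphi$-invariant modulo $sl$ (e.g.\ $\gamma_k(\varphi(\alpha_{s-1})) = \gamma_k(\alpha_0) = -(s-1)k$ need not agree with $\gamma_k(\alpha_{s-1}) = k$ modulo $sl$). The simplification that bypasses this difficulty is that the case (SL) hypothesis combined with $l \mid b$ collapses the situation to $l = 1$, at which point the invariance becomes immediate.
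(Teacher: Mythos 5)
Your proof is correct and follows the same overall route as the paper: Proposition~\ref{prop_example-cut}(a),(c) for the cut property and $B$-invariance, then $G/A$-invariance, then Theorem~\ref{thm_grading-invariant-cut}, and finally finite dimensionality of $(\Pi_G)_0$ via Proposition~\ref{prop_example-cut}(b) and lifting paths from $\tilde{Q}_G$ back to $Q$.

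The one point where you genuinely add something is the $G/A$-invariance of $C_k^{(l)}$. The paper disposes of it in one line, ``since the action of $G/A$ permutes the set $\{\alpha_0,\dots,\alpha_{s-1}\}$''; as you note, this only gives $\gamma_k\circ\varphi=\gamma_k$ when $\gamma_k$ takes the same value on $\alpha_0$ as on $\alpha_1,\dots,\alpha_{s-1}$, i.e.\ when $-(s-1)k\equiv k \pmod{sl}$, which amounts to $l\mid k$. Your observation that the (SL) hypothesis $c\equiv 0\pmod m$ together with $l\mid n$ and $l\mid b$ forces $sl\mid s$, hence $l=1$ and $k=l$, is correct; it both justifies the paper's terse claim and shows that the apparent generality in $l$ is vacuous in this case. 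The remaining steps (Gorenstein parameter $1$ from Theorem~\ref{thm_grading-invariant-cut}, and finite dimensionality because every path of length at least $sl$ in $Q_G$ has a lift of the same length in $Q$ meeting the cut) coincide with the paper's argument.
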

\begin{proof}
By Proposition~\ref{prop_example-cut}(c) we have that $C_k^{(l)}$ is $B$-invariant. It is also $G/A$-invariant, since the action of $G/A$ permutes the set $\{\alpha_0,\dots,\alpha_{s-1}\}$. So the result follows from Theorem~\ref{thm_grading-invariant-cut} if we prove that $(\Pi_G)_0$ is finite dimensional. To achieve this, is enough to show that there exists an integer $M$ such that every path in $Q_G$ of length greater or equal than $M$ has degree $1$. Clearly it is enough to prove this for $\tilde{Q}_G$. Let $p$ be a path in $\tilde{Q}_G$ of length $h$. Then it is easy to see that $p$ lifts to a path of length $h$ in $Q$. Hence, by Proposition~\ref{prop_example-cut}(b), it is enough to take $M=sl$.
\end{proof}

\subsection{The case $G\not\subseteq\SL(s,\C)$}

We retain all the notation of \S~\ref{sec_metacyclic-embedded-in-SL}. So we embed $G$ and $A$ in $\SL(s+1,\C)$ and we denote their images by $G'$ and $A'$ respectively. In this case, the vertices of the quiver $Q = Q^{(s+1)}$ are the points of a root lattice of type $\mathrm  A_s$, with basis $\alpha_1,\dots,\alpha_s$.

We have an isomorphism $\eta\colon Q/B \to Q_{A'}$, where $B$ is the kernel of the group homomorphism $\eta\colon L\to \Z/m\Z$ given by $\eta(\alpha_j)=-r^{j-1}$ for $j=1,\dots,s$. Note that in this case the element $\alpha_0=\alpha_{s+1}$ is sent to $c$.

The action of $G/A$ on $Q_A$, which is given on vertices by $\varphi(i)=ri$, extends naturally to $Q_{A'}$. Identifying the latter with $Q/B$ via $\eta$, this induces an automorphism of $Q/B$ given by $\varphi(\ol \alpha_j)=\ol \alpha_{j+1}$ for $j=1,\dots,s-1$, $\varphi(\ol \alpha_s)=\ol \alpha_1$. Moreover, it is easy to check that this automorphism lifts to an action on $Q$ defined by $\varphi(\alpha_j)=\alpha_{j+1}$, $j=1,\dots,s-1$, $\varphi(\alpha_s)=\alpha_1$. It is also worth to point out that $\alpha_0$ is now fixed by this action.

\begin{thm}\label{thm_grading-invariant-cut-embed}
Let $C$ be a cut in $Q$ which is invariant under both the actions of $B$ and $G/A$, and let $g_C$ be the grading on $Q$ induced by $C$. Then there exists a grading on $Q_{G'}$ such that the induced grading on $\Pi_{G'}$ makes it $(s+1)$-Calabi-Yau of Gorenstein parameter $1$.

In particular, the degree $0$ part $(\Pi_{G'})_0$ of $\Pi_{G'}$ is $s$-representation infinite if it is finite dimensional.
\end{thm}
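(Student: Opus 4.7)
The plan is to mirror the strategy of Theorem~\ref{thm_grading-invariant-cut}, replacing each ingredient with its embedded analogue developed in Section~\ref{sec_metacyclic-embedded-in-SL}. The key observation is that everything set up for the (SL) case has been carefully extended to the (GL) case: we have an isomorphism $\eta\colon Q/B \xrightarrow{\sim} Q_{A'}$ provided by Proposition~\ref{prop_Q/B-iso-Q_H} (with $H = A'$, $N = s+1$), a quiver morphism $\Phi'\colon Q_{A'} \to \tilde{Q}_{G'}$ compatible with the twists, and Proposition~\ref{prop_grading-A'-->grading-G'} ready to transfer gradings from $Q_{A'}$ to $Q_{G'}$.

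Concretely, first I would use the $B$-invariance of $C$ together with Proposition~\ref{prop_cut-B-inv-->H-CYGP1} (applied to $H = A'$ and $N = s+1$) to obtain a grading $g_C^{A'}$ on $Q_{A'}$ with respect to which the superpotential $\omega_{A'}$ is homogeneous of degree $1$. Next, I would exploit the $G/A$-invariance of $C$ to show that the quiver morphism $\Phi'\colon Q_{A'} \to \tilde{Q}_{G'}$ is $g_C^{A'}$-gradable: indeed, the $G/A$-action on $Q_{A'}$ lifts via $\eta$ to the action of $\varphi$ on $Q$ described above the theorem, and two arrows of $Q_{A'}$ which lie in the same $G/A$-orbit correspond (via $\eta^{-1}$) to $B$-orbits of arrows in $Q$ that differ by $\varphi$; the joint $B$- and $G/A$-invariance of $C$ then guarantees that such arrows have the same $g_C^{A'}$-degree, which is precisely gradability of $\Phi'$ in the sense of Definition~\ref{def_graded-quiver-morphisms}.

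With these two facts in hand, I would apply Proposition~\ref{prop_grading-A'-->grading-G'} to produce a grading $g_C^{G'}$ on $Q_{G'}$ for which $\omega_{G'}$ is homogeneous of degree $1$. Since $G' \subseteq \SL(s+1,\C)$ by construction, Corollary~\ref{cor_superpot-homog-->GP1} (applied with $n = s+1$) then immediately yields that $\Pi_{G'}$, equipped with the induced grading, is bimodule $(s+1)$-Calabi-Yau of Gorenstein parameter $1$. The final assertion about $(\Pi_{G'})_0$ being $s$-representation infinite under the finite-dimensionality hypothesis is then a direct invocation of Theorem~\ref{thm_n-RI_CYGP1}.

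I do not expect a genuine obstacle here; the content has been packaged upstream so that this theorem is a clean assembly of the (GL)-versions of the ingredients used in Theorem~\ref{thm_grading-invariant-cut}. The only point that requires momentary care is checking gradability of $\Phi'$ from the $G/A$-invariance of $C$, which amounts to unwinding the identifications $Q/B \cong Q_{A'}$ and $Q_{A'}/(G/A) \cong \tilde{Q}_{G'}$ and noting that two $G$-related pre-images in $Q$ of a given arrow of $\tilde{Q}_{G'}$ lie in the same joint $B$- and $\varphi$-orbit, hence carry the same degree under $g_C$.
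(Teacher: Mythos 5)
Your proposal is correct and follows exactly the same route as the paper's proof: $B$-invariance plus Proposition~\ref{prop_cut-B-inv-->H-CYGP1} (with $H=A'$, $N=s+1$) gives homogeneity of $\omega_{A'}$, $G/A$-invariance gives $g_C^{A'}$-gradability of $\Phi'$, and then Proposition~\ref{prop_grading-A'-->grading-G'} followed by Corollary~\ref{cor_superpot-homog-->GP1} finishes the argument. The extra detail you supply on unwinding the identifications to verify gradability of $\Phi'$ is consistent with what the paper leaves implicit.
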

\begin{proof}
Since $C$ is $B$-invariant, by Proposition~\ref{prop_cut-B-inv-->H-CYGP1} it induces a grading $g_C^A$ on $Q_{A'}$ such that the superpotential $\omega_{A'}$ becomes homogeneous of degree $1$. Moreover, the fact that $C$ is $G/A$ invariant implies that the morphism $\Phi'\colon Q_{A'}\to \tilde{Q}_{G'}$ is $g_C^A$-gradable. Hence we can apply Proposition~\ref{prop_grading-A'-->grading-G'} to get a grading $g_C^G$ on $Q_{G'}$ such that $\omega_{G'}$ is homogeneous of degree $1$. So the result follows from Corollary~\ref{cor_superpot-homog-->GP1}.
\end{proof}

\begin{cor}\label{cor_grading-invariant-cut-embed}
Let $l$ be a positive integer which divides both $n$ and $b$ and let $1\leq k\leq l-1$. Then the cut $C_k^{(l)}$ of Definition~\ref{def_cut} induces a grading on $\Pi_{G'}$ such that $(\Pi_{G'})_0$ is $s$-representation infinite.
\end{cor}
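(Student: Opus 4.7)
The plan is to mirror the proof of Corollary~\ref{cor_grading-invariant-cut}, replacing $Q_A$, $Q_G$, $\Phi$, and Theorem~\ref{thm_grading-invariant-cut} with their primed analogues $Q_{A'}$, $Q_{G'}$, $\Phi'$, and Theorem~\ref{thm_grading-invariant-cut-embed}. Three things must be verified: that $C_k^{(l)}$ is a cut in $Q = Q^{(s+1)}$, that it is invariant under both the $B$- and $G/A$-actions on $Q$, and that $(\Pi_{G'})_0$ is finite dimensional.

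First, Proposition~\ref{prop_example-cut}(a) in case (GL) shows that $C_k^{(l)}$ is a cut precisely when $k < l$, which is the hypothesis $1 \leq k \leq l-1$. The $B$-invariance is exactly Proposition~\ref{prop_example-cut}(c), since $l$ divides both $n$ and $b$. For the $G/A$-invariance, I would observe that $\gamma_k\circ\varphi = \gamma_k$: indeed $\varphi$ cyclically permutes $\{\alpha_1,\dots,\alpha_s\}$ and fixes $\alpha_0$, while $\gamma_k$ takes the constant value $k$ on $\alpha_1,\dots,\alpha_s$. Hence for every arrow $a_i^v\colon v\to v+\alpha_i$ the pair $(\ol{\gamma_k(v)},\ol{\gamma_k(v+\alpha_i)})$ coincides with $(\ol{\gamma_k(\varphi(v))},\ol{\gamma_k(\varphi(v)+\varphi(\alpha_i))})$, so $a_i^v \in C_k^{(l)}$ if and only if $\varphi(a_i^v) \in C_k^{(l)}$.

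Theorem~\ref{thm_grading-invariant-cut-embed} then produces a grading on $\Pi_{G'}$ making it bimodule $(s+1)$-Calabi--Yau of Gorenstein parameter $1$, so the only remaining task is the finite-dimensionality of $(\Pi_{G'})_0$. My strategy would be to show that every path $p$ in $Q_{G'}$ of length at least $sl$ has positive degree. Given such a $p$, its image $\Psi'(p)$ in $\tilde{Q}_{G'}$ has the same length and degree; since $\Phi'$ is surjective on arrows (the primed analogue of Proposition~\ref{prop_Q_A/(G/A)iso-tilde(Q)_G}), I can lift $\Psi'(p)$ to a path in $Q_{A'}$ of the same length, and then, via the isomorphism $Q/B\cong Q_{A'}$ of Proposition~\ref{prop_Q/B-iso-Q_H}, to a path in $Q$ of the same length and degree with respect to $g_C$. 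Proposition~\ref{prop_example-cut}(b) then ensures that this lifted path contains at least one arrow of $C_k^{(l)}$, and thus has degree at least $1$. Consequently only finitely many paths of $Q_{G'}$ have degree $0$, so $(\Pi_{G'})_0$ is finite dimensional, and the last assertion of Theorem~\ref{thm_grading-invariant-cut-embed} delivers the claim.

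The argument is essentially routine once the analogy with Corollary~\ref{cor_grading-invariant-cut} is in place; the only mild subtlety is the bound $k \leq l-1$ rather than $k \leq l$, which is what Proposition~\ref{prop_example-cut}(a) demands in case (GL). I do not expect any real obstacle beyond carefully tracking the lifting of paths through $\Psi'$ and $\Phi'$.
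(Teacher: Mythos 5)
Your proposal is correct and follows essentially the same route as the paper, which simply notes the $G/A$-invariance of $C_k^{(l)}$ (because $\varphi$ permutes $\{\alpha_0,\dots,\alpha_s\}$) and then declares the rest analogous to Corollary~\ref{cor_grading-invariant-cut}, i.e.\ $B$-invariance from Proposition~\ref{prop_example-cut}(c), Theorem~\ref{thm_grading-invariant-cut-embed} for the Gorenstein parameter, and finite dimensionality of $(\Pi_{G'})_0$ by lifting paths from $\tilde{Q}_{G'}$ to $Q$ and invoking Proposition~\ref{prop_example-cut}(b). You merely spell out details (the identity $\gamma_k\circ\varphi=\gamma_k$ and the lifting through $\Psi'$ and $\Phi'$) that the paper leaves implicit.
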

\begin{proof}
Note that $C_k^{(l)}$ is $G/A$-invariant, since the action of $G/A$ permutes the set $\{\alpha_0,\dots,\alpha_s\}$. The rest of the proof is analogue to the one of Corollary~\ref{cor_grading-invariant-cut}.
\end{proof}

We end this section with the following easy observation, which gives a method of obtaining new gradings in both the cases (SL) and (GL).

\begin{rem}\label{rem_morecuts}
If a cut in $Q$ contains an arrow which starts or ends with a fixed point $j$, then all the corresponding splitting arrows in $Q_G$ (respectively $Q_{G'}$) will have degree $1$ with respect to the grading defined in Theorem~\ref{thm_grading-invariant-cut} (respectively Theorem~\ref{thm_grading-invariant-cut-embed}). In this case all the paths in the superpotential $\omega_G$ (respectively $\omega_{G'}$) passing through $j^{(\ell)}$ will contain a subpath of the form $i \xrightarrow{a} j^{(\ell)} \xrightarrow{b} k$, where one arrow among $a$ and $b$ has degree $1$ and the other has degree $0$. Hence it is easy to see that if we define a new grading in the quiver by swapping the degrees of $a$ and $b$ (see Figure~\ref{fig_more-cuts}), then the degree of the superpotential remains unchanged and so the algebra $\Pi_G$ (respectively $\Pi_{G'}$) has again Gorenstein parameter $1$. Moreover, if the degree $0$ part of $\Pi_G$ with the old grading is finite dimensional, then the same is true if we consider the new grading.
\end{rem}

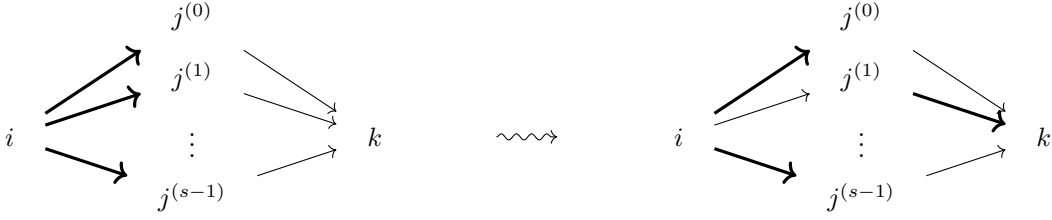
\begin{figure}
\begin{tikzpicture}[scale=0.8,inner sep=4mm]

\node (A) at (-3,0){$i$};
\node (B0) at (0,2){$j^{(0)}$};
\node (B1) at (0,1){$j^{(1)}$};
\node (B2) at (0,0){$\vdots$};
\node (B3) at (0,-1){$j^{(s-1)}$};
\node (C) at (3,0){$k$};

 \draw [->,very thick] (A) -> (B0) node[midway,above,yshift=-2mm] {};
 \draw [->,very thick] (A) -> (B1);
 \draw [->,very thick] (A) -> (B3) node[midway,below,yshift=1mm] {};

 \draw [->] (B0) -> (C) node[midway,above,yshift=-2mm] {};
 \draw [->] (B1) -> (C);
 \draw [->] (B3) -> (C) node[midway,below,yshift=1mm] {};



\begin{scope}[xshift=11cm]
\node (A) at (-3,0){$i$};
\node (B0) at (0,2){$j^{(0)}$};
\node (B1) at (0,1){$j^{(1)}$};
\node (B2) at (0,0){$\vdots$};
\node (B3) at (0,-1){$j^{(s-1)}$};
\node (C) at (3,0){$k$};

 \draw [->,very thick] (A) -> (B0) node[midway,above,yshift=-2mm] {};
 \draw [->] (A) -> (B1);
 \draw [->,very thick] (A) -> (B3) node[midway,below,yshift=1mm] {};

 \draw [->] (B0) -> (C) node[midway,above,yshift=-2mm] {};
 \draw [->,very thick] (B1) -> (C);
 \draw [->] (B3) -> (C) node[midway,below,yshift=1mm] {};
\end{scope}

\begin{scope}[xshift=5cm]
 \draw [->,decorate,decoration={snake,amplitude=.4mm,segment length=2mm,post length=0.5mm}] (0,0) -> (1,0) node[midway,above,yshift=-3mm] {};
\end{scope}

\end{tikzpicture}
\caption{On the left we have an example of a grading obtained as in Theorem~\ref{thm_grading-invariant-cut} in the local neighbourhood of a fixed point. Here the thick arrows have degree $1$, while the others have degree $0$. Applying the procedure described in Remark~\ref{rem_morecuts} for $\ell=1$ we obtain the grading illustrated on the right.}\label{fig_more-cuts}
\end{figure}

\section{Examples}\label{sec_examples}

We have seen so far that we can obtain examples of higher representation infinite algebras from skew group algebras of some finite groups which satisfy certain conditions, but we still don't know how rich the class of such groups is. The aim of this section is to show that we have indeed many examples of them.

We start by defining two families of metacyclic groups.

\begin{defn}\label{def_M(s,b)}
Let $s$ be a prime number.
\begin{enumerate}[label=(\alph*)]
\item For each $b\geq 1$ we define $M(s,b)$ to be the metacyclic group in $\GL(s,\C)$ associated to integers $m,r,s,t$, where we set
\[
r:=sb+1, \quad m:=\sum_{j=0}^{s-1} r^j, \quad t:= \left\{\begin{array}{cc}
m/s & \text{ if } s=2,\\
0 & \text{ if } s>2,
\end{array} \right.
\]
\item For each $b\geq 2$ we define $\hat{M}(s,b)$ to be the metacyclic group in $\GL(s,\C)$ associated to integers $m,r,s,t$, where we set
\[
r:=sb+1, \quad m:=b\sum_{j=0}^{s-1} r^j, \quad t:= \left\{\begin{array}{cc}
m/s & \text{ if } s=2,\\
0 & \text{ if } s>2,
\end{array} \right.
\]
\end{enumerate}
\end{defn}

\begin{prop}\label{prop_M(s,b)welldef}
The groups defined in Definition~\ref{def_M(s,b)} satisfy all the conditions \ref{item_(m,r)=1},\dots,\ref{item_r=1(s)}. Moreover, we have that $M(s,b) \subseteq \SL(s,\C)$ and $\hat{M}(s,b) \not\subseteq \SL(s,\C)$.
\end{prop}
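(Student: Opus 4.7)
The plan is to verify conditions (M1)--(M7) one by one from the definitions, and then to compute $\det(\alpha)$ and $\det(\beta)$ to decide the (SL)/(GL) dichotomy. Two of the seven conditions are free: (M4) holds by hypothesis, and (M7) holds by the very definition $r=sb+1$. The starting point for the remaining ones is the identity
\[
r^s-1 \;=\; (r-1)\sum_{j=0}^{s-1}r^j \;=\; sb\sum_{j=0}^{s-1}r^j,
\]
which, in view of Definition~\ref{def_M(s,b)}, specialises to $r^s-1=sbm$ in the $M(s,b)$ case and to $r^s-1=sm$ in the $\hat{M}(s,b)$ case. In particular $m\mid r^s-1$, so (M2) is immediate; and since $\gcd(r,r^s-1)=\gcd(r,1)=1$, also $\gcd(r,m)=1$, giving (M1).

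Next I would verify (M6) by observing that $r=sb+1\equiv 1\pmod s$, so $\sum_{j=0}^{s-1}r^j\equiv s\equiv 0\pmod s$; hence $s$ divides $\sum r^j$ and therefore divides $m$ in both definitions. For (M5) I need $m\nmid r-1=sb$, and this reduces to the estimate $m>sb>0$: in the $M(s,b)$ case $m=1+r+\dots+r^{s-1}>r>r-1=sb$, and in the $\hat{M}(s,b)$ case with $b\geq 2$ one has $m\geq b(1+r)>sb$. For (M3) I split on the parity of $s$: if $s>2$ then $t=0$ and the congruence is vacuous, while for $s=2$ we have $t=m/2$ and $(r-1)t=sb\cdot m/s=bm\equiv 0\pmod m$.

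It remains to compute the determinants. The diagonal matrix $\alpha$ satisfies $\det(\alpha)=\epsilon_m^{1+r+\dots+r^{s-1}}$, which in the $M(s,b)$ case equals $\epsilon_m^m=1$, while in the $\hat{M}(s,b)$ case equals $\epsilon_m^{m/b}$, an element of exact order $b$ in $\C^\times$ and hence different from $1$ as soon as $b\geq 2$. The matrix $\beta$ decomposes as the permutation matrix of the $s$-cycle $(1\,2\,\cdots\,s)$ times $\mathrm{diag}(1,\dots,1,\epsilon_m^t)$, giving $\det(\beta)=(-1)^{s-1}\epsilon_m^t$; for $s$ odd this is $1$, and for $s=2$ it equals $-\epsilon_m^{m/2}=-(-1)=1$ since $\epsilon_m^{m/2}$ is the unique primitive square root of unity. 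Thus $\beta\in\SL(s,\C)$ in every case, and the containment (resp.\ non-containment) of the whole group in $\SL(s,\C)$ is detected entirely by $\det(\alpha)$.

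The main obstacle is purely bookkeeping: the case split $s=2$ versus $s>2$ built into the value of $t$ has to be tracked consistently through both (M3) and the computation of $\det(\beta)$, and one must be careful that the estimate proving (M5) for $\hat M(s,b)$ genuinely uses $b\geq 2$ (for $b=1$ the two families coincide and $\hat M(s,1)\subseteq\SL(s,\C)$, which is why the definition excludes that value).
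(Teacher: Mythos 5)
Your proof is correct and follows essentially the same route as the paper: a direct condition-by-condition verification followed by the determinant computation, with the (SL)/(GL) dichotomy read off from $\det(\alpha)=\epsilon_m^c$. The only (harmless) variation is your uniform argument for \ref{item_(m,r)=1} via $\gcd(r,r^s-1)=1$ and $m\mid r^s-1$, where the paper instead notes that $m-1$ (resp.\ $m-b$) is a multiple of $r$.
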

\begin{proof}
(a) We consider first the case of $M(s,b)$, where $s$ is prime and $b\geq1$. Condition \ref{item_r=1(s)} is clear by definition of $r$. It is easy to see that $r^s-1=(r-1)m=sbm$, so \ref{item_r^s=1(m)} holds. For \ref{item_(m,r)=1}, we note that $m-1 = \sum_{j=1}^{s-1} r^j$ is a multiple of $r$, so clearly $(m,r)=1$. By \ref{item_r=1(s)} we have $m=\sum_{j=0}^{s-1} r^j \equiv \sum_{j=0}^{s-1} 1 \equiv 0 \pmod s$, so we have \ref{item_m=sn}. Now we can write $m=sn$, so $t= \left\{\begin{array}{cc}
n & \text{ if } s=2,\\
0 & \text{ if } s>2,
\end{array} \right.$. Hence \ref{item_(r-1)t=0(m)} is clear if $s>2$; for $s=2$ note that we have $r=2b+1$, $m=1+r=2(b+1)$, $n=b+1$, so $(r-1)t=2bn=2b(b+1) \equiv 0 \pmod m$. Condition \ref{item_r-not1} is clear because $1<r<m$. Finally, since $c=\sum_{j=0}^{s-1} r^j=m$, by the discussion at the beginning of Subsection~\ref{sec_caseSL} we have that $M(b,s)\subseteq\SL(s,\C)$.

(b) Now we consider $\hat{M}(s,b)$, so in this case $s$ is prime and $b\geq2$. Again, \ref{item_r=1(s)} is clear. It is easy to see that $r^s-1=sm$, so \ref{item_r^s=1(m)} holds. Now note that $m-b = b\sum_{j=1}^{s-1} r^j$ is a multiple of $r$, so $(m,r)=(b,r)=(b,sb+1)=1$ and \ref{item_(m,r)=1} holds. Condition \ref{item_m=sn} is clear, so we can write $m=sn$. Hence we have again $t= \left\{\begin{array}{cc}
n & \text{ if } s=2,\\
0 & \text{ if } s>2,
\end{array} \right.$, so \ref{item_(r-1)t=0(m)} is clear except for $s=2$: in this case $m=2b(b+1)$ and $n=b(b+1)$, thus $(r-1)t=2bn=2b^2(b+1) \equiv 0 \pmod m$. Condition \ref{item_r-not1} holds because $1<r<m$. Finally, since $b\geq2$ it is clear that $1<c<m$: this implies that $\det(\alpha)=\epsilon_m^c\neq1$ and thus $\hat{M}(s,b) \not\subseteq \SL(s,\C)$.
\end{proof}

In case (GL), in order to apply the results of the previous section, we need $G$ to satisfy Assumption~\ref{ass_repclosed}. This will not be always the case: in the following we will exhibit a sufficient condition for this to happen, and we will show that the groups of the form $\hat{M}(s,b)$ satisfy it. We will also see in Example~\ref{ex_bin-dih} that this condition is not necessary.

\begin{prop}\label{prop_D-invariant}
Let $r,m,s,t$ be integers which satisfy conditions \ref{item_(m,r)=1},\dots,\ref{item_r=1(s)}, and let $G$ be the associated metacyclic group. Consider the automorphism $\tau$ of $\Z/m\Z$ given by the sum by $c$, and call $u$ its order. Suppose that $(u,s)=1$, then there exists a complete set $\cal D$ of representatives for the action of $G/A$ on $\Z/m\Z$ which is closed under $\tau$.
\end{prop}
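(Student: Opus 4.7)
The plan is to exploit that multiplication by $r$ and $\tau$ commute as operators on $\Z/m\Z$: indeed $rc - c = (r-1)c \equiv r^s - 1 \equiv 0 \pmod m$ by condition (M2), so $\tau(rd) = r\tau(d)$ for every $d$. Consequently $\tau$ descends to a permutation (still denoted $\tau$) of the set $X$ of $G/A$-orbits, and the desired $\cal D$ is precisely a $\tau$-equivariant section of the quotient map $\Z/m\Z \to X$. To construct one it suffices to select an arbitrary $d_O \in O$ for each chosen representative $O \in X$ of a $\tau$-orbit, and propagate by $d_{\tau^j(O)} := \tau^j(d_O) = d_O + jc$. The whole question then reduces to showing that $\tau$ acts \emph{freely} on $X$.

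The key technical step, and the only place where the hypothesis $(u,s)=1$ enters, is to prove that every $\tau$-orbit in $X$ has length $u$. For a fixed-point orbit $O = \{d\}$ this is immediate: $\tau^j(O) = O$ forces $jc \equiv 0 \pmod m$ and hence $u \mid j$. For a size-$s$ orbit $O = \{d, rd, \ldots, r^{s-1}d\}$, I would let $a$ be the smallest positive integer with $\tau^a(O) = O$, so that $ac \equiv (r^{k_O}-1)d \pmod m$ for some $k_O \in \{0, \ldots, s-1\}$, and use $rc \equiv c$ to check that $\tau^a$ acts on $O$ as multiplication by $r^{k_O}$. The order of this permutation on $O$ equals $s/\gcd(s, k_O)$, and since $\tau^u$ is the identity on $O$ this order must divide $u/a$, and in particular $u$. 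By $(u,s) = 1$ and primality of $s$ we are forced into $s \mid k_O$, whence $r^{k_O} \equiv 1$, $ac \equiv 0 \pmod m$, and $u \mid a$, so that $a = u$. This size-$s$ case is the main obstacle; the fixed-point case and the subsequent cyclic propagation are completely formal.

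With freeness of the $\tau$-action on $X$ in hand, the conclusion is straightforward. The $\tau$-orbits partition $X$ into blocks of size $u$; for each such block, pick any $O_0$ and any $d_0 \in O_0$ and declare the representatives of $O_0, \tau(O_0), \ldots, \tau^{u-1}(O_0)$ to be $d_0, d_0+c, \ldots, d_0+(u-1)c$. These $u$ elements are pairwise distinct because $\tau$ acts freely on $\Z/m\Z$ itself (its stabiliser at any element consists of the $j$ with $jc \equiv 0 \pmod m$, i.e.\ the multiples of $u$), and each lies in the $G/A$-orbit it is meant to represent. The resulting set $\cal D$ is manifestly $\tau$-invariant, since applying $\tau$ cyclically permutes the representatives within each block, and it meets each $G/A$-orbit exactly once.
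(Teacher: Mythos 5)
Your proof is correct and follows essentially the same route as the paper: the heart of both arguments is showing that the $\tau$-orbit of each $G/A$-orbit has length exactly $u$, by observing that a return $\tau^a(O)=O$ realizes $\tau^a|_O$ as multiplication by some $r^{k}$, whose order (a divisor of $s$) must divide $u$ and is therefore trivial by $(u,s)=1$ and the primality of $s$. The paper phrases this via the iteration $r^{lh}i \equiv i + lk_1c$ rather than via the order of the induced permutation, but the content and the subsequent block-by-block construction of $\cal D$ are the same.
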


\begin{rem}
The integer $u$ we just defined is the smallest positive integer which is a solution to the equation $cx \equiv 0 \pmod m$, and hence it is equal to $\frac{m}{(c,m)}$.
\end{rem}

\begin{proof}[Proof of Proposition~\ref{prop_D-invariant}]
Clearly any set $\cal D$ of representatives contains the set $\cal F$ of fixed points. Moreover, $\cal F$ is closed under $\tau$, so we only have to show a suitable way to choose the elements in $\cal D \minus \cal F$.

We have already seen in Proposition~\ref{prop_def-tau} that $\tau$ induces an action on the $G/A$-orbits of $\Z/m\Z$ given by $\tau([i])=[\tau(i)]=[i+c]$, where $[i]$ denotes the $G/A$-orbit of $i$. So take an $i_1\in\Z/m\Z$ and call $k_1$ the smallest positive integer such that $\tau^{k_1}([i_1])=[i_1]$. Let $\cal D_1 := \{i_1,\tau(i_1),\dots,\tau^{k_1-1}(i_1)\}$: the elements of this set clearly provide representatives of different $G/A$-orbits. Now we want to show that $\cal D_1$ is invariant under $\tau$, which is equivalent to say that $i_1+k_1c \equiv i_1 \pmod m$.

The fact that $[i_1+k_1c]=\tau^{k_1}([i_1])=[i_1]$ implies that there exists an $h\in\{0,\dots,s-1\}$ such that $i_1+k_1c \equiv r^h i_1 \pmod m$. Now an easy induction shows that $r^{lh}i_1 \equiv i_1 + lk_1c \pmod m$ for all $l\geq1$. In particular we have that $r^{uh}i_1 \equiv i_1+uk_1c \pmod m$, and the latter is equivalent to $i_1$ modulo $m$ because $cu \equiv 0 \pmod m$. Since $i_1$ is not in $\cal F$, we must have that $uh \equiv 0 \pmod s$, but this implies that $h \equiv 0 \pmod s$ because we have assumed that $(u,s)=1$. Hence $h=0$ and so $i_1+k_1c \equiv i_1 \pmod m$, which shows that $\cal D_1$ is invariant under $\tau$.

Now we can choose an element $i_2\in\Z/m\Z$ which does not belong to any of the orbits of the elements in $\cal D_1$, and applying the same argument as above we can construct a set $\cal D_2 = \{i_2,\tau(i_2),\dots,\tau^{k_2-1}(i_2)\}$ which is invariant under $\tau$ and such that the orbits of elements in $\cal D_1$ are disjoint from the ones of the elements in $\cal D_2$. Repeating this procedure until we can, we obtain a sequence of sets $\cal D_1,\dots,\cal D_N$, and $\cal D := \bigcup_{j=1}^{N} \cal D_j \cup \cal F$ provides a complete set of representatives for the $G/A$-action which is invariant under $\tau$.
\end{proof}

We now show that we can always obtain a higher representation infinite algebra from the examples we discussed above.

\begin{cor}\label{cor_RI-from-metacyclic}
Let $s$ be a prime number.
\begin{enumerate}[label=(\alph*)]
\item For each integer $b\geq 1$, there exists an $(s-1)$-representation infinite algebra which is the degree 0 part of $\Pi_G$, where $G=M(s,b)$.
\item For each integer $b\geq 2$ such that $(b,s)=1$, there exists an $s$-representation infinite algebra which is the degree 0 part of $\Pi_{G'}$, where $G=\hat{M}(s,b)$ and $G'$ is its embedding in $\SL(s+1,\C)$.
\end{enumerate}
\end{cor}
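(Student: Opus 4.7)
The plan is to deduce both parts directly from Corollaries~\ref{cor_grading-invariant-cut} and \ref{cor_grading-invariant-cut-embed}. Proposition~\ref{prop_M(s,b)welldef} already guarantees that both $M(s,b)$ and $\hat{M}(s,b)$ satisfy conditions \ref{item_(m,r)=1}--\ref{item_r=1(s)}, with $M(s,b) \subseteq \SL(s,\C)$ and $\hat{M}(s,b) \not\subseteq \SL(s,\C)$, so the two families fall respectively into the (SL) and (GL) frameworks of Section~\ref{sec_cuts}. The task therefore reduces to (i) verifying Assumption~\ref{ass_repclosed} and (ii) exhibiting admissible integers $l$ and $k$ for the relevant cut construction.

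For part (a), since $M(s,b) \subseteq \SL(s,\C)$ one has $c \equiv 0 \pmod m$, so the twist $\tau$ acts trivially on $\Z/m\Z$ and Assumption~\ref{ass_repclosed} is automatic for any set of representatives. The trivial choice $l = k = 1$ satisfies the divisibility conditions $l \mid n$, $l \mid b$ and the range $1 \leq k \leq l$, so Corollary~\ref{cor_grading-invariant-cut} immediately produces the desired $(s-1)$-representation infinite algebra.

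For part (b), the plan has two additional checks. First, to verify Assumption~\ref{ass_repclosed} for $\hat{M}(s,b)$ I will appeal to Proposition~\ref{prop_D-invariant}. The definition $m = b \sum_{j=0}^{s-1} r^j$ gives $m = bc$, whence $(c,m) = c$ and the order of $\tau$ on $\Z/m\Z$ equals $u = m/(c,m) = b$; the hypothesis $(b,s) = 1$ then reads exactly $(u,s) = 1$, and Proposition~\ref{prop_D-invariant} furnishes a $\tau$-stable set of representatives. Second, I will apply Corollary~\ref{cor_grading-invariant-cut-embed} with $l := b$ and $k := 1$. The bound $1 \leq k \leq l-1$ is fine because $b \geq 2$, the divisibility $b \mid b$ is trivial, and $b \mid n$ follows from the observation that $r \equiv 1 \pmod s$ implies $c \equiv s \equiv 0 \pmod s$, so $s \mid c$ and hence $sb \mid bc = m$, giving $b \mid m/s = n$. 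Corollary~\ref{cor_grading-invariant-cut-embed} then yields the $s$-representation infinite algebra.

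The whole proof is essentially a bookkeeping argument assembling the preceding corollaries; the only genuinely non-formal step is the verification of Assumption~\ref{ass_repclosed} in part (b), and it is exactly there that the coprimality hypothesis $(b,s) = 1$ is used, via Proposition~\ref{prop_D-invariant}.
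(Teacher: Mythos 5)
Your proposal is correct and follows essentially the same route as the paper: both parts reduce to Corollaries~\ref{cor_grading-invariant-cut} and \ref{cor_grading-invariant-cut-embed} with the same choices $l=k=1$ for (a) and $l=b$, $k=1$ for (b), with Assumption~\ref{ass_repclosed} in case (b) verified via Proposition~\ref{prop_D-invariant} using $u=b$ and $(b,s)=1$, and $b\mid n$ obtained from $n=b\,c/s$. Your explicit remark that Assumption~\ref{ass_repclosed} is automatic in the (SL) case because $c\equiv 0\pmod m$ is a harmless (and correct) addition that the paper leaves implicit.
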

\begin{proof}
(a) Take a positive integer $l$ which divides both $n$ and $b$ and an integer $1\leq k\leq l$ (for example, we could choose $l=k=1$). Then the result follows by applying Corollary~\ref{cor_grading-invariant-cut}.

(b) Recall that in this case we have $m=bc$, so $u=b$. Hence we have $(u,s)=1$, so by Proposition~\ref{prop_D-invariant} there exists a set of representatives $\cal D$ which is invariant under $\tau'$. Now take an integer $l\geq 2$ which divides both $n$ and $b$ and an integer $1\leq k<l$. Note that $m=bc$ and $c$ is a multiple of $s$, so $n=b\frac{c}{s}$: hence we can choose, for example, $l=b\geq 2$ and $k=1$, and so it is always possible to find $l,k$ which satisfy the above properties. Then the result follows by applying Corollary~\ref{cor_grading-invariant-cut-embed}.
\end{proof}

In the following we will give some examples for $s=2,3$.

\subsection{Examples for $G\subseteq\SL(s,\C)$}

\begin{example}
Let $s=2$. We now want to describe all metacyclic groups in $\SL(2,\C)$ which satisfy conditions \ref{item_(m,r)=1},\dots,\ref{item_r=1(s)}.

By \ref{item_m=sn} we can take $m=2n$ for an integer $n\geq 2$. Since we want our group to be in $\SL(2,\C)$, it is clear that $r=m-1$ is the unique (modulo $m$) possible choice of $r$. Also, as we already observed previously, we must take $t=n$. Hence the corresponding metacyclic group $G$ is generated by the matrices
\[
\alpha = \left(
\begin{array}{cc}
\epsilon_{2n} & 0 \\
0 & \epsilon_{2n}^{-1}
\end{array}\right),
\qquad
\beta = \left(
\begin{array}{cc}
0 & -1 \\
1 & 0
\end{array}\right).
\]

The quiver $Q$ is the preprojective quiver of type $\tilde{\mathrm A}_\infty^\infty$ and can be drawn on a line:
\[
\begin{tikzcd}[every arrow/.append style={shift left}]
\,\arrow[dotted,thick,-,shift right]{r} & 1 \arrow{r}{a_1} & 0 \arrow{r}{a_1} \arrow{l}{a_0} & 2n-1 \arrow{l}{a_0} \arrow[dotted,thick,-,shift right]{r} & \arrow{r}{a_1} & n \arrow{l}{a_0} \arrow{r}{a_1} & \arrow{l}{a_0} \arrow[dotted,thick,-,shift right]{r} & 1 \arrow{r}{a_1} & 0 \arrow{l}{a_0} \arrow[dotted,thick,-,shift right]{r} &\,
\end{tikzcd}
\]
Here we have labelled every vertex by its image in $\Z/m\Z$ under the morphism $\eta$ described in Section~\ref{sec_cuts}. Then $Q_A$, which we recall being isomorphic to the orbit quiver $Q/A$, is obtained by taking the subquiver of the above quiver given by some $m+1$ consecutive vertices $0,1,\dots,2n-1,0$ and identifying the two vertices labelled by $0$. Note that in this way we obtain the preprojective quiver of type $\tilde{\mathrm A}_{2n-1}$:
\[
\begin{tikzcd}[every arrow/.append style={shift left}]
 & 1 \arrow{ld}{x_0} \arrow{r}{x_1} & \arrow{l}{x_0} \arrow[dotted,thick,-,shift right]{r} & \arrow{r}{x_1} & n-1 \arrow{l}{x_0} \arrow{rd}{x_1} & \\
0 \arrow{ru}{x_1} \arrow{rd}{x_0} & & & & & n \arrow{ld}{x_1}  \arrow{lu}{x_0}  \\
 & 2n-1 \arrow{lu}{x_1} \arrow{r}{x_0} & \arrow{l}{x_1} \arrow[dotted,thick,-,shift right]{r} & \arrow{r}{x_0} & n+1 \arrow{l}{x_1} \arrow{ru}{x_0} & 
\end{tikzcd} 
\]
By Proposition~\ref{prop_quiver_A}, its path algebra modulo the preprojective relations is isomorphic to the skew group algebra $\C[V]*A$.

Now consider the action of $G/A \cong \Z/2\Z$, which sends $i$ to $-i$. On $Q$ it is given by rotating the quiver of $180^\circ$ around the vertex $0$, while on $Q_A$ it is given by reflecting with respect to the horizontal line passing through $0$ and $n$. It is easy to see that $\cal D = \{0,\dots,n\}$ is a complete set of representatives in $(Q_A)_0$ of this action, and that the only fixed points are $0$ and $n$. Hence the quiver $Q_G$ is given by
\[
\begin{tikzcd}[every arrow/.append style={shift left}]
n^{(0)} \arrow{rd}{x_{0,0}^{(0)}} & & & & & 0^{(0)} \arrow{ld}{x_{1,0}^{(0)}} \\
 & n-1 \arrow{ld}{x_{1,0}^{(1)}} \arrow{r}{x_{0,0}} \arrow{lu}{x_{1,0}^{(0)}} & n-2 \arrow{l}{x_{1,0}} \arrow[dotted,thick,-,shift right]{r} & 2 \arrow{r}{x_{0,0}} & 1 \arrow{l}{x_{1,0}} \arrow{ru}{x_{0,0}^{(0)}} \arrow{rd}{x_{0,0}^{(1)}} & \\
n^{(1)} \arrow{ru}{x_{0,0}^{(1)}} & & & & & 0^{(1)} \arrow{lu}{x_{1,0}^{(1)}}
\end{tikzcd}
\]
which is the preprojective quiver of type $\tilde{\mathrm D}_{n+2}$.

Using Corollary~\ref{cor_supp-s=2,3} and the description of the superpotential of $Q_A$, we can see that the support of $\omega_G$ is given by all paths of the form $x_{0,0}x_{1,0}$, $x_{1,0}x_{0,0}$, $x_{0,0}^{(\ell)}x_{1,0}^{(\ell)}$, $x_{1,0}^{(\ell)}x_{0,0}^{(\ell)}$, $\ell=0,1$. Moreover, any cut in $Q$ which is invariant under the actions of $B$ and $G/A$ induces an orientation of the graph underlying $Q_G$. So if we consider the corresponding induced grading on $\Pi_G$ and take the degree $0$ part, we obtain an hereditary representation infinite algebra of type $\tilde{\mathrm D}_{n+2}$. Using Remark~\ref{rem_morecuts} it is easy to see that all these algebras are obtainable in this way.

We may note that the relations we get from the superpotential in our case are different from the classical preprojective relations. However, we still have that $\Pi_G$ is isomorphic to a preprojective algebra of type $\tilde{\mathrm D}_{n+2}$, because we know, by Theorem~\ref{thm_n-RI_CYGP1}, that it is isomorphic to the preprojective algebra of $(\Pi_G)_0$, and we saw that $(\Pi_G)_0$ is the path algebra of a quiver of type $\tilde{\mathrm D}_{n+2}$.
\end{example}

\begin{figure}[b]
\centering
\tikzset{->-/.style={decoration={
  markings,
  mark=at position #1 with {\arrow{latex}}},postaction={decorate}},
  ->-/.default=0.6,}

\begin{tikzpicture}[->,scale=0.8]
\begin{scope}[rotate=-15,inner sep=1.5mm]
\begin{scriptsize}

\def\b{1}							
\def\n{7}

\pgfmathtruncatemacro{\m}{3*\n}
\pgfmathtruncatemacro{\r}{3*\b+1}

\coordinate (A1) at (1,0,-1);
\coordinate (A2) at (-1,1,0);
\coordinate (A0) at (0,-1,1);

\node (beta0) at (0,0,0){};								
\node (beta1) at ($-\r*(A1) + (A2)$){};
\node (beta2) at ($\r*(A2) - (A0)$){};
\node (beta3) at ($(beta2) - (beta1)$){};

\node (fix0) at ($ 0.333*(beta0) + 0.333*(beta1) + 0.333*(beta2) $){};
\node (fix1) at ($ 0.333*(beta0) + 0.333*(beta3) + 0.333*(beta2) $){};

\filldraw[very nearly transparent,-] ($(beta0)$) -- ($(fix0)$) -- ($(beta2)$) -- ($(fix1)$);
\draw[thick,->-,-,opacity=0.5] (beta0) -- (fix0);
\draw[thick,->-,-,opacity=0.5] (beta2) -- (fix0);
\draw[thick,->-,-,opacity=0.5] (beta2) -- (fix1);
\draw[thick,->-,-,opacity=0.5] (beta0) -- (fix1);


\pgfmathtruncatemacro{\minx}{-\r-1}			
\pgfmathtruncatemacro{\maxx}{\r-2}
\pgfmathtruncatemacro{\miny}{-\b}
\pgfmathtruncatemacro{\maxy}{\r+1}

	\pgfmathtruncatemacro{\i}{Mod(-\minx+1+(-\maxy-1)*(\r+1),\m)}
	\node (D) at (\minx-1,\maxy+1,-\minx-\maxy) {$\i$};	
\foreach \y in {\miny,...,\maxy}
{
	\pgfmathtruncatemacro{\i}{Mod(-\minx+1+(-\y)*(\r+1),\m)}	
	\node (D) at (\minx-1,\y,-\minx-\y+1) {$\i$};		
	\node (P) at (\maxx,\y,-\maxx-\y) {};
	\pgfmathtruncatemacro{\iright}{Mod(-\maxx+(-\y)*(\r+1),\m)}
	\pgfmathtruncatemacro{\cuttt}{Mod(\iright,3)}
	\node (Q) at (\maxx,\y+1,-\maxx-\y-1) {};
 \ifnum \cuttt=0 \draw[very thick] (Q) -- (P);
 [\else \draw (Q) -- (P);] \fi												
}

\foreach \x in {\minx,...,\maxx}
{
\pgfmathtruncatemacro{\i}{Mod(-\x+(-\maxy-1)*(\r+1),\m)}	
\pgfmathtruncatemacro{\cutt}{Mod(\i,3)}
\node (M) at (\x,\maxy+1,-\x-\maxy-1)  {$\i$};
\node (N) at (\x-1,\maxy+1,-\x-\maxy)  {};
 \ifnum \cutt=0 \draw[very thick] (N) -- (M);											
 [\else \draw (N) -- (M);] \fi

	\foreach \y in {\miny,...,\maxy}
{
	\pgfmathtruncatemacro{\i}{Mod(-\x+(-\y)*(\r+1),\m)}
	\pgfmathtruncatemacro{\cut}{Mod(\i,3)}
	\node (A) at (\x,\y,-\x-\y)  {$\i$};		
	\node (B) at (\x-1,\y+1,-\x-\y) {};
	\node (C) at (\x-1,\y,-\x-\y+1) {};
	  \ifnum \cut=1 \draw[very thick] (A) -- (B);
	  [\else \draw (A) -- (B);] \fi
	  \ifnum \cut=0 \draw[very thick] (C) -- (A);
	  [\else \draw (C) -- (A);] \fi
	  \ifnum \cut=2 \draw[very thick] (B) -- (C);
	  [\else \draw (B) -- (C);] \fi
}
}
\end{scriptsize}
\end{scope}
\end{tikzpicture}
\caption{The cut $C^{(1)}_1$ for $m=21$, $r=4$, $s=3$ (the arrows in the cut are represented by thick lines). A complete set of representatives $\cal D$ for the $G/A$-action is given by the vertices contained in the shadowed rhombus (the two $0$'s are identified), and the fixed points (i.e. $0$, $7$ and $14$) are the vertices of the latter. The quiver $\tilde{Q}_G$ is obtained by identifying the edges of the rhombus whose adjacent vertices have the same name, according to the orientation depicted. For a picture of $Q_G$ see Figure~\ref{fig_quiver cut C^1_1}.} \label{fig_cut C_1^1}
\end{figure}

\begin{example}\label{ex_s=3}
Now we exhibit an example where $s=3$. Let $G=M(3,1)$, so we have $m=21$, $r=4$, $t=0$. Then $G$ is generated by the matrices
\[
\alpha = \left(
\begin{array}{ccc}
\epsilon_{21} & 0 & 0 \\
0 & \epsilon_{21}^4 & 0 \\
0 & 0 & \epsilon_{21}^{16}
\end{array}\right),
\qquad
\beta = \left(
\begin{array}{ccc}
0 & 0 & 1 \\
1 & 0 & 0 \\
0 & 1 & 0
\end{array}\right).
\]

We have already depicted the quivers $Q$ and $Q_A$ in Figure~\ref{fig_quiver-s=3}. In this case, the $G/A$-action on $Q$ is given by an anticlockwise rotation of $120^\circ$ around the origin $0$, and it is easy to see that the induced action on $Q_A$ can be realized as an anticlockwise rotation of $120^\circ$ around each fixed point.

In Figure~\ref{fig_cut C_1^1} we show a way to realize the quiver $\tilde{Q}_G$, together with the cut $C^{(1)}_1$. The quiver $Q_G$ and the grading induced by this cut are illustrated in Figure~\ref{fig_quiver cut C^1_1}. Note that in this case Corollary~\ref{cor_supp-s=2,3} holds, so the paths in the support of the superpotential $\omega_G$ are exactly the ones induced by paths in $\supp(\omega_A)$.

\begin{figure}
\centering
\begin{tikzpicture}[->,scale=0.6,inner sep=1.5mm,shape=circle,>=stealth]

\node (17) at (-10,0){$17$};
\node (13) at (-5,5){$13$};
\node (12) at (-5,-5){$12$};
\node (9) at (5,5){$9$};
\node (8) at (5,-5){$8$};
\node (4) at (10,0){$4$};
\begin{scope}[yshift=7cm]
\node (140) at (0,0){$14^{(0)}$};
\node (141) at (0,1){$14^{(1)}$};
\node (142) at (0,2){$14^{(2)}$};
\end{scope}
\begin{scope}[yshift=-7cm]
\node (70) at (0,0){$7^{(0)}$};
\node (71) at (0,-1){$7^{(1)}$};
\node (72) at (0,-2){$7^{(2)}$};
\end{scope}
\begin{scope}[yshift=-11cm]
\node (00) at (0,0){$0^{(0)}$};
\node (01) at (0,-1){$0^{(1)}$};
\node (02) at (0,-2){$0^{(2)}$};
\end{scope}

\begin{tiny}
\draw (17.70) -- node[midway,above,inner sep=2] {$x^{17}_{1,0}$} (13);
\draw[transform canvas={yshift=0.2em}] (17) -- node[midway,above,inner sep=0,pos=0.3] {$x^{17}_{0,1}$} (4);
\draw[transform canvas={yshift=-0.2em}] (17) -- node[midway,below,inner sep=0,pos=0.3] {$x^{17}_{2,2}$} (4);
\draw[very thick] (13) -- node[midway,left,inner sep=0,pos=0.15] {$x^{13}_{0,0}$} (12);
\draw[transform canvas={yshift=0.2em},very thick] (13) -- node[midway,above,inner sep=0] {$x^{13}_{1,0}$} (9);
\draw[transform canvas={yshift=-0.2em},very thick] (13) -- node[midway,below,inner sep=0] {$x^{13}_{2,2}$} (9);
\draw (12) -- node[midway,right,inner sep=2] {$x^{12}_{2,0}$} (17);
\draw[transform canvas={yshift=0.2em}] (12) -- node[midway,above,inner sep=0] {$x^{12}_{1,0}$} (8);
\draw[transform canvas={yshift=-0.2em}] (12) -- node[midway,below,inner sep=0] {$x^{12}_{0,1}$} (8);
\draw (9.225) -- node[midway,below,inner sep=0,pos=0.3] {$x^{9}_{1,1}$} (17.45);
\draw (9) -- node[midway,right,inner sep=0,pos=0.3] {$x^{9}_{0,0}$} (8);
\draw (9.150) to[bend right=15] node[midway,inner sep=0,fill=white,pos=0.5] {$x^{9(0)}_{2,0}$} (140);
\draw (9.130) to[bend right=15] node[midway,inner sep=0,fill=white,pos=0.7] {$x^{9(1)}_{2,0}$} (141);
\draw (9.110) to[bend right=15] node[midway,inner sep=0,fill=white,pos=0.8] {$x^{9(2)}_{2,0}$} (142);
\draw (8) -- node[midway,right,inner sep=2,pos=0.6] {$x^{8}_{2,0}$} (13);
\draw (8) -- node[midway,left,inner sep=2] {$x^{8}_{1,0}$} (4.240);
\draw (8.210) to[bend left=15] node[midway,inner sep=0,fill=white,pos=0.4] {$x^{8(0)}_{0,0}$} (70);
\draw (8.230) to[bend left=15] node[midway,inner sep=0,fill=white,pos=0.6] {$x^{8(1)}_{0,0}$} (71);
\draw (8.250) to[bend left=15] node[midway,inner sep=0,fill=white,pos=0.8] {$x^{8(2)}_{0,0}$} (72);
\draw[very thick] (4.110) -- node[midway,right,inner sep=2] {$x^{4}_{2,0}$} (9);
\draw[very thick] (4.225) -- node[midway,above,inner sep=0,pos=0.7] {$x^{4}_{0,2}$} (12.45);
\draw[very thick] (4.255) to[bend left=15] node[midway,inner sep=0,fill=white,pos=0.6] {$x^{4(0)}_{1,0}$} (00.east);
\draw[very thick] (4.270) to[bend left=15] node[midway,inner sep=0,fill=white,pos=0.7] {$x^{4(1)}_{1,0}$} (01.east);
\draw[very thick] (4.285) to[bend left=15] node[midway,inner sep=0,fill=white,pos=0.8] {$x^{4(2)}_{1,0}$} (02.east);
\draw (140) to[bend right=15] node[midway,inner sep=0,fill=white,pos=0.6] {$x^{(0)14}_{0,0}$} (13.30);
\draw (141) to[bend right=15] node[midway,inner sep=0,fill=white,pos=0.3] {$x^{(1)14}_{0,0}$} (13.50);
\draw (142) to[bend right=15] node[midway,inner sep=0,fill=white,pos=0.1] {$x^{(2)14}_{0,0}$} (13.70);
\draw[very thick] (70) to[bend left=15] node[midway,inner sep=0,fill=white,pos=0.6] {$x^{(0)7}_{2,0}$} (12.330);
\draw[very thick] (71) to[bend left=15] node[midway,inner sep=0,fill=white,pos=0.4] {$x^{(1)7}_{2,0}$} (12.310);
\draw[very thick] (72) to[bend left=15] node[midway,inner sep=0,fill=white,pos=0.2] {$x^{(2)7}_{2,0}$} (12.290);
\draw (00.west) to[bend left=15] node[midway,inner sep=0,fill=white,pos=0.4] {$x^{(0)0}_{1,0}$} (17.300);
\draw (01.west) to[bend left=15] node[midway,inner sep=0,fill=white,pos=0.3] {$x^{(1)0}_{1,0}$} (17.280);
\draw (02.west) to[bend left=15] node[midway,inner sep=0,fill=white,pos=0.2] {$x^{(2)0}_{1,0}$} (17.260);
\end{tiny}

\end{tikzpicture}
\caption{The quiver $Q_G$ for $m=21$, $r=4$, $s=3$. The thick arrows have degree $1$ with respect to the grading associated to the cut of Figure~\ref{fig_cut C_1^1}, so the quiver of the corresponding $2$-representation infinite algebra $(\Pi_G)_0$ is obtained by deleting these arrows.}\label{fig_quiver cut C^1_1}
\end{figure}
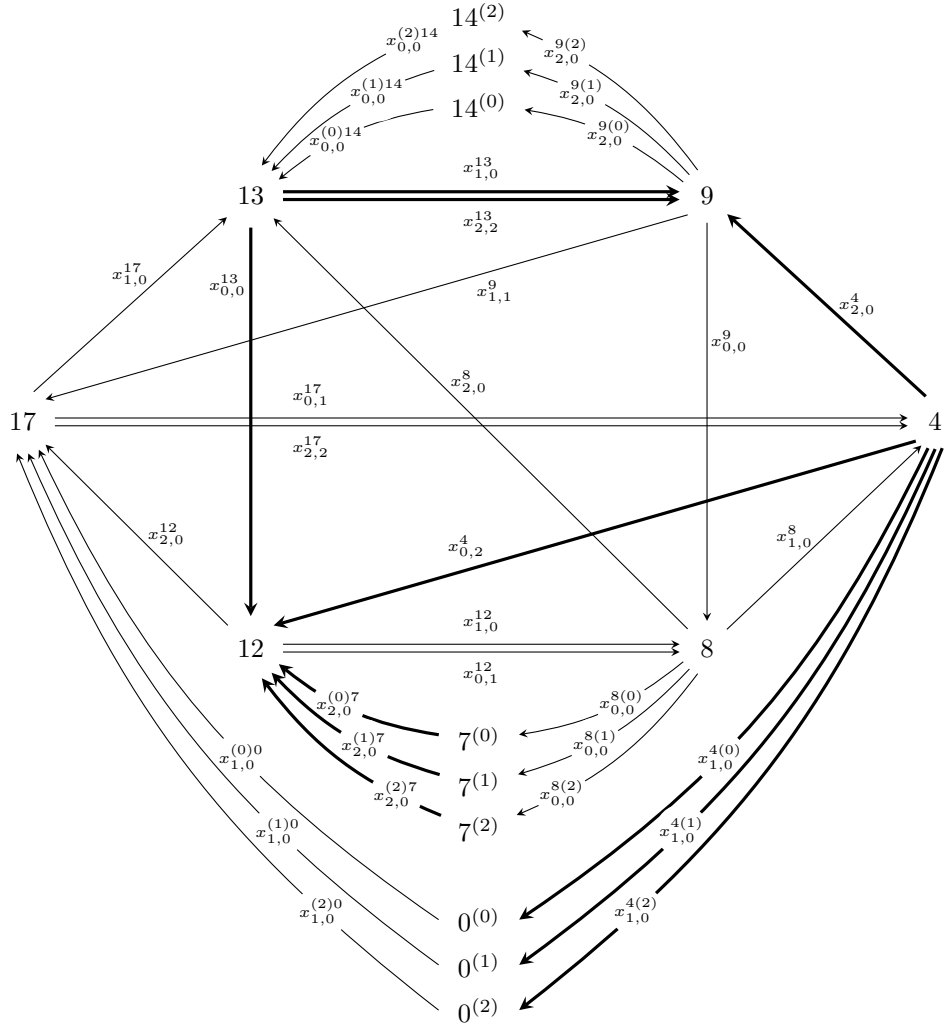
\end{example}

Now we will give an example where the conditions \ref{item_m=sn} and \ref{item_r=1(s)} do not hold, and we will show that in this case we have no invariant cuts.

\begin{example}
Set $m=7$, $r=2$, $s=3$, $t=0$, and let $G$ be the corresponding metacyclic group. The quivers $Q$, $Q_A$ and $\tilde{Q}_G$ are described in Figure~\ref{fig_bad cut}. For a picture of $Q_G$, see \cite[Example~5.5]{BSW}. Note that the quiver $Q_A$ can be embedded in a torus which carries a $G/A$-action with three fixed points. However, among these fixed points only one corresponds to a vertex of the quiver: the others are located in the ``barycentre of a triangle'', as we can see in the figure. Now consider the cyclic path $2\to 1 \to 4\to 2$: the action of $G/A$ sends each arrow in it to the arrow which precedes it in the path. Since a cut must contain exactly one arrow from this path, it is clear that we cannot have cuts which are $G/A$-invariant.
\end{example}

\begin{figure}
\centering
\tikzset{->-/.style={decoration={
  markings,
  mark=at position #1 with {\arrow{latex}}},postaction={decorate}},
  ->-/.default=0.6,}

\begin{tikzpicture}[->,scale=0.8]
\begin{scope}[rotate=-15,inner sep=2mm]
\begin{scriptsize}

\def\r{2}							
\def\m{7}

\pgfmathtruncatemacro{\b}{(\r-1)*0.333}

\coordinate (A1) at (1,0,-1);
\coordinate (A2) at (-1,1,0);
\coordinate (A0) at (0,-1,1);

\node[inner sep=0] (beta0) at (0,0,0){};								
\node[inner sep=0] (beta1) at ($-\r*(A1) + (A2)$){};
\node[inner sep=0] (beta2) at ($\r*(A2) - (A0)$){};
\node[inner sep=0] (beta3) at ($(beta2) - (beta1)$){};

\node[inner sep=1,fill,shape=circle] (fix0) at ($ 0.333*(beta0) + 0.333*(beta1) + 0.333*(beta2) $){};
\node[inner sep=1,fill,shape=circle] (fix1) at ($ 0.333*(beta0) + 0.333*(beta3) + 0.333*(beta2) $){};

\filldraw[very nearly transparent,-] ($(beta0)$) -- ($(beta1)$) -- ($(beta2)$) -- ($(beta3)$);

\filldraw[very nearly transparent,-] ($(beta0)$) -- ($(fix0)$) -- ($(beta2)$) -- ($(fix1)$);
\draw[thick,->-,-,opacity=0.5] (beta0) -- (fix0);
\draw[thick,->-,-,opacity=0.5] (beta2) -- (fix0);
\draw[thick,->-,-,opacity=0.5] (beta2) -- (fix1);
\draw[thick,->-,-,opacity=0.5] (beta0) -- (fix1);

\pgfmathtruncatemacro{\minx}{-\r-1}			
\pgfmathtruncatemacro{\maxx}{\r}
\pgfmathtruncatemacro{\miny}{-\b-1}
\pgfmathtruncatemacro{\maxy}{\r+1}

	\pgfmathtruncatemacro{\i}{Mod(-\minx+1+(-\maxy-1)*(\r+1),\m)}
	\node (D) at (\minx-1,\maxy+1,-\minx-\maxy) {$\i$};	
\foreach \y in {\miny,...,\maxy}
{
	\pgfmathtruncatemacro{\i}{Mod(-\minx+1+(-\y)*(\r+1),\m)}	
	\node (D) at (\minx-1,\y,-\minx-\y+1) {$\i$};		
	\node (P) at (\maxx,\y,-\maxx-\y) {};
	\node (Q) at (\maxx,\y+1,-\maxx-\y-1) {};
 \draw (Q) -- (P);												
}

\foreach \x in {\minx,...,\maxx}
{
\pgfmathtruncatemacro{\i}{Mod(-\x+(-\maxy-1)*(\r+1),\m)}	
\node (M) at (\x,\maxy+1,-\x-\maxy-1)  {$\i$};
\node (N) at (\x-1,\maxy+1,-\x-\maxy)  {};
 \draw (N) -- (M);											
	\foreach \y in {\miny,...,\maxy}
{
	\pgfmathtruncatemacro{\i}{Mod(-\x+(-\y)*(\r+1),\m)}
	\node (A) at (\x,\y,-\x-\y)  {$\i$};		
	\node (B) at (\x-1,\y+1,-\x-\y) {};
	\node (C) at (\x-1,\y,-\x-\y+1) {};
	  \draw (A) -- (B);
	  \draw (C) -- (A);
	  \draw (B) -- (C);
}
}
\end{scriptsize}
\end{scope}
\end{tikzpicture}
\caption{The quiver $Q^{(3)}$ with $m=7$, $r=2$. Fundamental domains for the quivers $Q_A$ and $\tilde{Q}_G$ are given by, respectively, the light and the dark shadowed regions. The vertices of the latter are the fixed points for the $G/A$ action on the torus.}\label{fig_bad cut}
\end{figure}

\subsection{Examples for $G\not\subseteq\SL(s,\C)$}

\begin{example}\label{ex_bin-dih}
Let $G=M(2,2)$, so we have  $s=2$, $r=5$, $m=12$ and $t=6$. The group $G$ is generated by the matrices
\[
\alpha = \left(
\begin{array}{cc}
\epsilon_{12} & 0 \\
0 & \epsilon_{12}^5
\end{array}\right),
\qquad
\beta = \left(
\begin{array}{cc}
0 & -1 \\
1 & 0
\end{array}\right)
\]
and is not contained in $\SL(2,\C)$, because $c=1+r=6$. Thus its image $G'$ under the embedding in $\SL(3,\C)$ is generated by
\[
\alpha = \left(
\begin{array}{ccc}
\epsilon_{12} & 0 \\
0 & \epsilon_{12}^5 & 0 \\
0 & 0 & \epsilon_{12}^6
\end{array}\right),
\qquad
\beta = \left(
\begin{array}{ccc}
0 & -1 & 0 \\
1 & 0 & 0 \\
0 & 0 & 1
\end{array}\right).
\]

The quivers $Q_{A'}$ and $Q_{G'}$, together with some cuts, are represented in Figures~\ref{fig_bin-dih-A}, \ref{fig_bin-dih-A2} and \ref{fig_bin-dih-G}. Note that the arrows of type $i\to \tau(i)$ are the ones which point in south-west direction.

We end this example by remarking that in this case $u=s=2$, and so the condition $(u,s)=1$ in Proposition~\ref{prop_D-invariant} is not satisfied. However, it is still possible to choose a set of representatives $\cal D$ which is invariant under $\tau'$ (see Figure~\ref{fig_bin-dih-G}), so such condition is not necessary. Moreover, in Figure~\ref{fig_bin-dih-A2} we exhibit a cut which is not induced by one of the form $C^{(l)}_k$, but it is easily checked that it still yields a grading on $\Pi_G$ such that $(\Pi_G)_0$ is $2$-representation infinite.

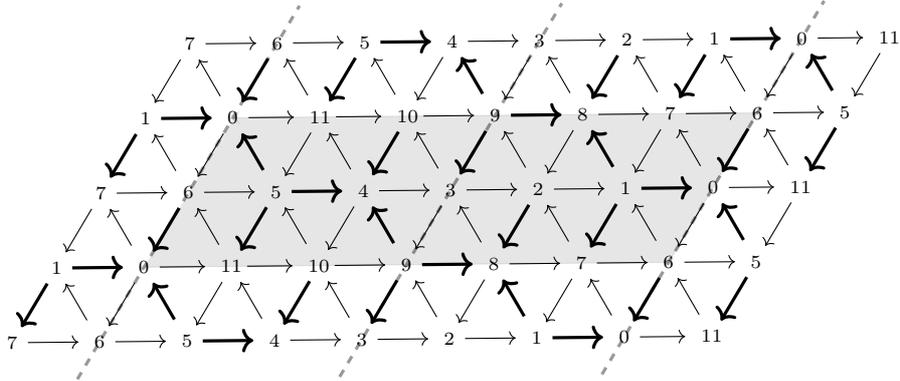
\begin{figure}
\centering
\begin{tikzpicture}[->,scale=0.8]
\begin{scope}[rotate=-15,inner sep=2mm]
\begin{scriptsize}

\def\b{2}							
\def\n{6}

\pgfmathtruncatemacro{\m}{2*\n}
\pgfmathtruncatemacro{\r}{2*\b+1}
\pgfmathtruncatemacro{\c}{1+\r}
\pgfmathtruncatemacro{\u}{\m/(gcd(\c,\m))};
\pgfmathtruncatemacro{\q}{\m/(gcd(\r-1,\m))};

\coordinate (A1) at (1,0,-1);
\coordinate (A2) at (-1,1,0);
\coordinate (A0) at (0,-1,1);

\node (beta0) at (0,0,0){};								
\node (beta1) at ($2*\q*(A1)$){};
\node (beta2) at ($(beta1)+\u*(A2)+\u*(A1)$){};
\node (beta3) at ($(beta2) - (beta1)$){};

\filldraw[very nearly transparent] ($(beta0)$) -- ($(beta1)$) -- ($(beta2)$) -- ($(beta3)$);

\pgfmathtruncatemacro{\minx}{0}			
\pgfmathtruncatemacro{\maxx}{2*\q+1}
\pgfmathtruncatemacro{\miny}{-1}
\pgfmathtruncatemacro{\maxy}{\u}

\draw[very thick,dashed,-,opacity=0.4] (\minx,\miny-0.5,-\minx-\miny+0.5) -- (\minx,\maxy+1.5,-\minx-\maxy-1.5);
\draw[very thick,dashed,-,opacity=0.4] (\minx+3,\miny-0.5,-\minx-\miny-2.5) -- (\minx+3,\maxy+1.5,-\minx-\maxy-4.5);
\draw[very thick,dashed,-,opacity=0.4] (\minx+6,\miny-0.5,-\minx-\miny-5.5) -- (\minx+6,\maxy+1.5,-\minx-\maxy-7.5);


	\pgfmathtruncatemacro{\i}{Mod(-\minx+1+(-\maxy-1)*(\r+1),\m)}
	\node (D) at (\minx-1,\maxy+1,-\minx-\maxy) {$\i$};	
\foreach \y in {\miny,...,\maxy}
{
	\pgfmathtruncatemacro{\i}{Mod(-\minx+1+(-\y)*(\r+1),\m)}	
	\pgfmathtruncatemacro{\cut}{Mod(-\i,4)}
	\pgfmathtruncatemacro{\cutt}{Mod(-\i+2,4)}
	\node (D) at (\minx-1,\y,-\minx-\y+1) {$\i$};		
	\node (P) at (\maxx,\y,-\maxx-\y) {};
	\node (Q) at (\maxx,\y+1,-\maxx-\y-1) {};
	\ifnum \cutt>\cut \draw[very thick] (Q) -- (P);
    [\else \draw (Q) -- (P);] \fi											
}

\foreach \x in {\minx,...,\maxx}
{
\pgfmathtruncatemacro{\i}{Mod(-\x+(-\maxy-1)*(\r+1),\m)}	
	\pgfmathtruncatemacro{\cut}{Mod(-\i,4)}
	\pgfmathtruncatemacro{\cutt}{Mod(-\i-1,4)}
\node (M) at (\x,\maxy+1,-\x-\maxy-1)  {$\i$};
\node (N) at (\x-1,\maxy+1,-\x-\maxy)  {};
 \ifnum \cutt>\cut \draw[very thick] (N) -- (M);
 [\else \draw (N) -- (M);] \fi											
	\foreach \y in {\miny,...,\maxy}
{
	\pgfmathtruncatemacro{\i}{Mod(-\x+(-\y)*(\r+1),\m)}
	\pgfmathtruncatemacro{\cutA}{Mod(-\i,4)}
	\pgfmathtruncatemacro{\cutB}{Mod(-\i+1,4)}
	\pgfmathtruncatemacro{\cutC}{Mod(-\i-1,4)}
	\node (A) at (\x,\y,-\x-\y)  {$\i$};		
	\node (B) at (\x-1,\y+1,-\x-\y) {};			
	\node (C) at (\x-1,\y,-\x-\y+1) {};			
	  \ifnum \cutA>\cutB \draw[very thick] (A) -- (B);
      [\else \draw (A) -- (B);] \fi
	  \ifnum \cutC>\cutA \draw[very thick] (C) -- (A);
      [\else \draw (C) -- (A);] \fi
	  \ifnum \cutB>\cutC \draw[very thick] (B) -- (C);
      [\else \draw (B) -- (C);] \fi
}
}
\end{scriptsize}
\end{scope}
\end{tikzpicture}
\caption{The quiver $Q^{(3)}$, where the vertices are labelled with their image under the isomorphism $\eta$ according to the setting of Example~\ref{ex_bin-dih}. The quiver $Q_{A'}$ is given by the vertices contained in the shaded parallelogram, where the vertices on opposite sides with the same label are identified. Hence $Q_{A'}$ can be embedded on a torus. The action of $G/A$ on $Q_{A'}$ is given by reflecting along the dashed lines, and the vertices contained in the latter are the fixed points. The thick arrows represent the cut $C^{(2)}_1$.}\label{fig_bin-dih-A}
\end{figure}

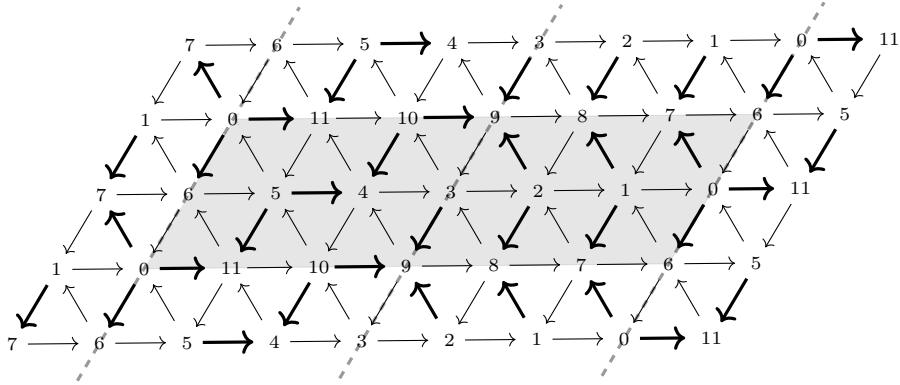
\begin{figure}
\centering
\begin{tikzpicture}[->,scale=0.8]
\begin{scope}[rotate=-15,inner sep=2mm]
\begin{scriptsize}

\def\b{2}							
\def\n{6}

\pgfmathtruncatemacro{\m}{2*\n}
\pgfmathtruncatemacro{\r}{2*\b+1}
\pgfmathtruncatemacro{\c}{1+\r}
\pgfmathtruncatemacro{\u}{\m/(gcd(\c,\m))};
\pgfmathtruncatemacro{\q}{\m/(gcd(\r-1,\m))};

\coordinate (A1) at (1,0,-1);
\coordinate (A2) at (-1,1,0);
\coordinate (A0) at (0,-1,1);

\node (beta0) at (0,0,0){};								
\node (beta1) at ($2*\q*(A1)$){};
\node (beta2) at ($(beta1)+\u*(A2)+\u*(A1)$){};
\node (beta3) at ($(beta2) - (beta1)$){};

\filldraw[very nearly transparent] ($(beta0)$) -- ($(beta1)$) -- ($(beta2)$) -- ($(beta3)$);

\pgfmathtruncatemacro{\minx}{0}			
\pgfmathtruncatemacro{\maxx}{2*\q+1}
\pgfmathtruncatemacro{\miny}{-1}
\pgfmathtruncatemacro{\maxy}{\u}

\draw[very thick,dashed,-,opacity=0.4] (\minx,\miny-0.5,-\minx-\miny+0.5) -- (\minx,\maxy+1.5,-\minx-\maxy-1.5);
\draw[very thick,dashed,-,opacity=0.4] (\minx+3,\miny-0.5,-\minx-\miny-2.5) -- (\minx+3,\maxy+1.5,-\minx-\maxy-4.5);
\draw[very thick,dashed,-,opacity=0.4] (\minx+6,\miny-0.5,-\minx-\miny-5.5) -- (\minx+6,\maxy+1.5,-\minx-\maxy-7.5);


	\pgfmathtruncatemacro{\i}{Mod(-\minx+1+(-\maxy-1)*(\r+1),\m)}
	\node (D) at (\minx-1,\maxy+1,-\minx-\maxy) {$\i$};	
\foreach \y in {\miny,...,\maxy}
{
	\pgfmathtruncatemacro{\i}{Mod(-\minx+1+(-\y)*(\r+1),\m)}	
	\node (D) at (\minx-1,\y,-\minx-\y+1) {$\i$};		
	\node (P) at (\maxx,\y,-\maxx-\y) {};
	\node (Q) at (\maxx,\y+1,-\maxx-\y-1) {};
 \ifnum\i=7 \draw[very thick] (Q) -- (P);	
 [\else \draw (Q) -- (P);] \fi							
}

\foreach \x in {\minx,...,\maxx}
{
\pgfmathtruncatemacro{\i}{Mod(-\x+(-\maxy-1)*(\r+1),\m)}	
\node (M) at (\x,\maxy+1,-\x-\maxy-1)  {$\i$};
\node (N) at (\x-1,\maxy+1,-\x-\maxy)  {};
 \ifnum\ifnum\i=4 1\else\ifnum\i=11 1\else0\fi\fi=1 \draw[very thick] (N) -- (M);
	  [\else \draw (N) -- (M);] \fi						
	\foreach \y in {\miny,...,\maxy}
{
	\pgfmathtruncatemacro{\i}{Mod(-\x+(-\y)*(\r+1),\m)}
	\node (A) at (\x,\y,-\x-\y)  {$\i$};		
	\node (B) at (\x-1,\y+1,-\x-\y) {};
	\node (C) at (\x-1,\y,-\x-\y+1) {};
	  \ifnum\ifnum\i=0 1\else\ifnum\i=1 1\else\ifnum\i=2 1\else0\fi\fi\fi=1 \draw[very thick] (A) -- (B);
	  [\else \draw (A) -- (B);] \fi
	  \ifnum\ifnum\i=3 1\else\ifnum\i=5 1\else\ifnum\i=10 1\else\ifnum\i=6 1\else\ifnum\i=7 1\else\ifnum\i=8 1
	  \else0\fi\fi\fi\fi\fi\fi=1 \draw[very thick] (B) -- (C);
	  [\else \draw (B) -- (C);] \fi
	  \ifnum\ifnum\i=9 1\else\ifnum\i=4 1\else\ifnum\i=11 1\else0\fi\fi\fi=1 \draw[very thick] (C) -- (A);
	  [\else \draw (C) -- (A);] \fi
}
}
\end{scriptsize}
\end{scope}
\end{tikzpicture}
\caption{A cut in the setting of Example~\ref{ex_bin-dih} which is not of the form $C^{(l)}_k$ for any $k,l$.}\label{fig_bin-dih-A2}
\end{figure}

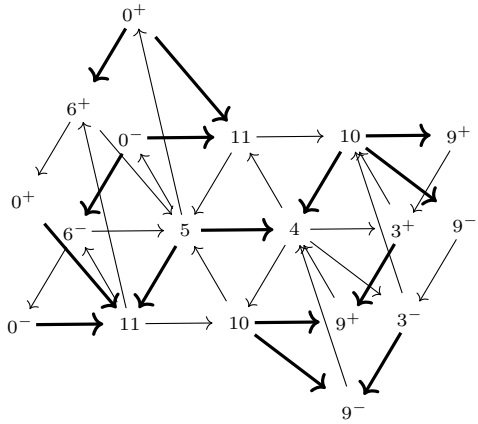
\begin{figure}
\centering
\begin{tikzpicture}[->,scale=1]
\begin{scope}[rotate=-15,inner sep=2mm]
\begin{scriptsize}

\def\b{2}							
\def\n{6}

\pgfmathtruncatemacro{\m}{2*\n}
\pgfmathtruncatemacro{\r}{2*\b+1}
\pgfmathtruncatemacro{\c}{1+\r}
\pgfmathtruncatemacro{\u}{\m/(gcd(\c,\m))};
\pgfmathtruncatemacro{\q}{\m/(gcd(\r-1,\m))};

\coordinate (A1) at (1,0,-1);
\coordinate (A2) at (-1,1,0);
\coordinate (A0) at (0,-1,1);

\coordinate (lshift) at (0,2,1);			
\coordinate (rshift) at (0,-1.5,-1);		

\node (beta0) at (0,0,0){};								
\node (beta1) at ($2*\q*(A1)$){};
\node (beta2) at ($(beta1)+\u*(A2)+\u*(A1)$){};
\node (beta3) at ($(beta2) - (beta1)$){};

\pgfmathtruncatemacro{\minx}{1}			
\pgfmathtruncatemacro{\maxx}{\q}
\pgfmathtruncatemacro{\miny}{0}
\pgfmathtruncatemacro{\maxy}{\u-1}

	\pgfmathtruncatemacro{\i}{Mod(-\minx+1+(-\maxy-1)*(\r+1),\m)}
	\node (D-) at (\minx-1,\maxy+1,-\minx-\maxy) {\scriptsize $\i^-$};	
	\node (D+) at ($(\minx-1,\maxy+1,-\minx-\maxy)+(lshift)$) {\scriptsize $\i^+$};	
	\node (D) at (\minx,\maxy+1,-\minx-\maxy-1) {};
	\draw[very thick] (D+) -- (D);

\foreach \y in {\miny,...,\maxy}
{
	\pgfmathtruncatemacro{\i}{Mod(-\minx+1+(-\y)*(\r+1),\m)}	
	\node (D-) at (\minx-1,\y,-\minx-\y+1) {\scriptsize $\i^-$};		
	\node (D+) at ($(\minx-1,\y,-\minx-\y+1)+(lshift)$) {\scriptsize $\i^+$};		
	\node (E+) at ($(\minx-1,\y+1,-\minx-\y)+(lshift)$) {};
	\node (F) at (\minx,\y,-\minx-\y) {};	
	\node (P-) at (\maxx,\y,-\maxx-\y) {};
	\node (Q-) at (\maxx,\y+1,-\maxx-\y-1) {};
	\node (P+) at ($(\maxx,\y,-\maxx-\y)+(rshift)$) {};
	\node (Q+) at ($(\maxx,\y+1,-\maxx-\y-1)+(rshift)$) {};
 \ifnum\i=0	 \draw[very thick] (Q-) -- (P-);
 			 \draw[very thick] (Q+) -- (P+);
 [\else \draw (Q-) -- (P-);
 		\draw (Q+) -- (P+);] \fi							
 \ifnum\i=6	 \draw[very thick] (E+) -- (D+);
 [\else \draw (E+) -- (D+);] \fi												
 \draw (F) -- (E+);
 \ifnum\i=0	 \draw[very thick] (D+) -- (F);
 [\else \draw (D+) -- (F);] \fi
}

\pgfmathtruncatemacro{\maxxmone}{\maxx-1}
\foreach \x in {\minx,...,\maxxmone}
{
\pgfmathtruncatemacro{\i}{Mod(-\x+(-\maxy-1)*(\r+1),\m)}	
\node (M) at (\x,\maxy+1,-\x-\maxy-1)  {\scriptsize $\i$};
\node (N) at (\x-1,\maxy+1,-\x-\maxy)  {};
 \ifnum\ifnum\i=4 1\else\ifnum\i=11 1\else0\fi\fi=1 \draw[very thick] (N) -- (M);
	  [\else \draw (N) -- (M);] \fi											
	\foreach \y in {\miny,...,\maxy}
{
	\pgfmathtruncatemacro{\i}{Mod(-\x+(-\y)*(\r+1),\m)}
	\node (A) at (\x,\y,-\x-\y)  {\scriptsize $\i$};		
	\node (B) at (\x-1,\y+1,-\x-\y) {};
	\node (C) at (\x-1,\y,-\x-\y+1) {};
	  \ifnum\ifnum\i=0 1\else\ifnum\i=1 1\else\ifnum\i=2 1\else0\fi\fi\fi=1 \draw[very thick] (A) -- (B);
	  [\else \draw (A) -- (B);] \fi
	  \ifnum\ifnum\i=3 1\else\ifnum\i=5 1\else\ifnum\i=10 1\else\ifnum\i=6 1\else\ifnum\i=7 1\else\ifnum\i=8 1
	  \else0\fi\fi\fi\fi\fi\fi=1 \draw[very thick] (B) -- (C);
	  [\else \draw (B) -- (C);] \fi
	  \ifnum\ifnum\i=9 1\else\ifnum\i=4 1\else\ifnum\i=11 1\else0\fi\fi\fi=1 \draw[very thick] (C) -- (A);
	  [\else \draw (C) -- (A);] \fi
}
}

\pgfmathtruncatemacro{\i}{Mod(-\maxx+(-\maxy-1)*(\r+1),\m)}	
\node (M+) at (\maxx,\maxy+1,-\maxx-\maxy-1)  {\scriptsize $\i^+$};
\node (M-) at ($(\maxx,\maxy+1,-\maxx-\maxy-1)+(rshift)$)  {\scriptsize $\i^-$};
\node (N) at (\maxx-1,\maxy+1,-\maxx-\maxy)  {};
 \draw[very thick] (N) -- (M+);											
 \draw[very thick] (N) -- (M-);											

	\foreach \y in {\miny,...,\maxy}
{
	\pgfmathtruncatemacro{\i}{Mod(-\maxx+(-\y)*(\r+1),\m)}
	\node (A+) at (\maxx,\y,-\maxx-\y)  {\scriptsize $\i^+$};		
	\node (A-) at ($(\maxx,\y,-\maxx-\y)+(rshift)$)  {\scriptsize $\i^-$};		
	\node (B) at (\maxx-1,\y+1,-\maxx-\y) {};
	\node (C) at (\maxx-1,\y,-\maxx-\y+1) {};
	  \draw (A+) -- (B);
	  \draw (A-) -- (B);
	  \ifnum\i=9	 \draw[very thick] (C) -- (A-);
 			 \draw[very thick] (C) -- (A+);
 	  [\else \draw (C) -- (A-);
 		\draw (C) -- (A+);] \fi
	  \ifnum\i=3	 \draw[very thick] (B) -- (C);
 		[\else \draw (B) -- (C);] \fi
}

\end{scriptsize}
\end{scope}
\end{tikzpicture}
\caption{The quiver $Q_{G'}$ obtained from Figure~\ref{fig_bin-dih-A2} by choosing as set of representatives $\cal D$ the vertices contained in the left-side half of the parallelogram. Note that now only the upper and lower sides are identified. The thick arrows are induced by an invariant cut in $Q$.}\label{fig_bin-dih-G}
\end{figure}

\end{example}

\clearpage

\end{document}